\definecolor{mygray}{gray}{0.2} % 0=black, 1=white
\numberwithin{equation}{section}
\newcommand{\La}{\Lambda}
\newcommand{\D}{{\mathbb{D}}}
\newcommand{\al}{\alpha}
\newcommand{\ga}{{\gamma}}
\newcommand{\la}{\lambda}
\newcommand{\Ov}{\mathscr{O}}
\newcommand{\bV}{\mathbf{V}}
\newcommand{\bW}{\mathbf{W}}
\newcommand{\bB}{\mathcal{B}}
\newcommand{\bw}{\mathbf{w}}
\newcommand{\disteq}{\stackrel{d}{=}}
\newcommand{\distconv}{\xrightarrow[N \rightarrow \infty]{d}}
\newcommand{\TUE}{\mathrm{TUE}}
\newcommand{\CUE}{\mathrm{CUE}}
\newcommand{\Sph}{\mathrm{Sph}}
\newcommand{\CGE}{\mathrm{CGE}}
\newcommand{\beq}{\begin{equation}}
\newcommand{\eeq}{\end{equation}}
\newcommand{\dd}{\mathrm{d}}
\renewcommand{\epsilon}{\varepsilon}
\renewcommand{\leq}{\leqslant}
\renewcommand{\geq}{\geqslant}
\newcommand{\E}{\mathbb{E}}
\newcommand{\R}{\mathbb{R}}
\newcommand{\C}{\mathbb{C}}
\newcommand{\N}{\mathbb{N}}
\DeclareMathOperator{\tr}{Tr}
\DeclareMathOperator{\var}{Var}
\theoremstyle{plain} %plain, definition, remark
\newtheorem{theorem}{Theorem}[section]
\newtheorem*{theorem*}{Theorem}
\newtheorem{lemma}[theorem]{Lemma}
\newtheorem*{lemma*}{Lemma}
\newtheorem*{corollary*}{Corollary}
\newtheorem{proposition}[theorem]{Proposition}
\newtheorem*{proposition*}{Proposition}
\newtheorem*{definition*}{Definition}
\newtheorem*{example*}{Example}
\newtheorem*{remark*}{Remark}
\newtheorem*{remarks*}{Remarks}
\renewcommand{\section}{\@startsection
{section}%                   % the name
{1}%                         % the level
{0mm}%                       % the indent
{-2\baselineskip}%            % the before skip
{1\baselineskip}%          % the after skip
{\large\scshape\centering}} % the style
\renewcommand{\subsection}{\@startsection
{subsection}%                   % the name
{2}%                         % the level
{0mm}%                       % the indent
{-\baselineskip}%            % the before skip
{1 \baselineskip}%          % the after skip
{\scshape}} % the style
\renewcommand{\subsubsection}{\@startsection
{subsubsection}%                   % the name
{3}%                         % the level
{0mm}%                       % the indent
{-\baselineskip}%            % the before skip
{- \baselineskip}%          % the after skip
{\textit}} % the style
\title{\scshape \large On eigenvector statistics \\
in the spherical and truncated unitary ensembles}
\author{Guillaume Dubach \\
Courant Institute, NYU \\
\textit{dubach@cims.nyu.edu}}
\providecommand{\keywords}[1]{\vspace{.3in} \textbf{Keywords:} #1}
\date{}
\begin{document}

\maketitle

\begin{abstract}
We study the overlaps between right and left eigenvectors for random matrices of the spherical and truncated unitary ensembles. Conditionally on all eigenvalues, diagonal overlaps are shown to be distributed as a product of independent random variables. This enables us to prove that the scaled diagonal overlaps, conditionally on one eigenvalue, converge in distribution to a heavy-tail limit, namely, the inverse of a $\gamma_2$ distribution. These results are analogous to what is known for the complex Ginibre ensemble. We also provide formulae for the conditional expectation of diagonal and off-diagonal overlaps, with respect to all eigenvalues.

\end{abstract}

\definecolor{mygray}{gray}{0}
\tableofcontents
\definecolor{mygray}{gray}{0.2}

\keywords{Eigenvectors overlaps; non-Hermitian random matrices; Truncated Unitary matrices; Spherical
ensemble.}

\newpage

\section{Introduction}

\subsection{Spherical and Truncated Unitary Ensembles}
This work considers two ensembles of random matrices defined as follows.
\begin{enumerate}
\item The spherical ensemble consists of products $G_1 G_2^{-1}$, where $G_1, G_2$ are i.i.d. complex Ginibre matrices. We denote the $N \times N$ complex Ginibre ensemble by $\CGE (N)$ and the corresponding spherical ensemble by $\Sph(N)$. The name \textit{spherical} comes from a geometric description of the eigenvalues, stated as Proposition \ref{spherical_name} and illustrated on Figure \ref{Sph_figure}.

\begin{figure}[h!]
\includegraphics[width=\textwidth]{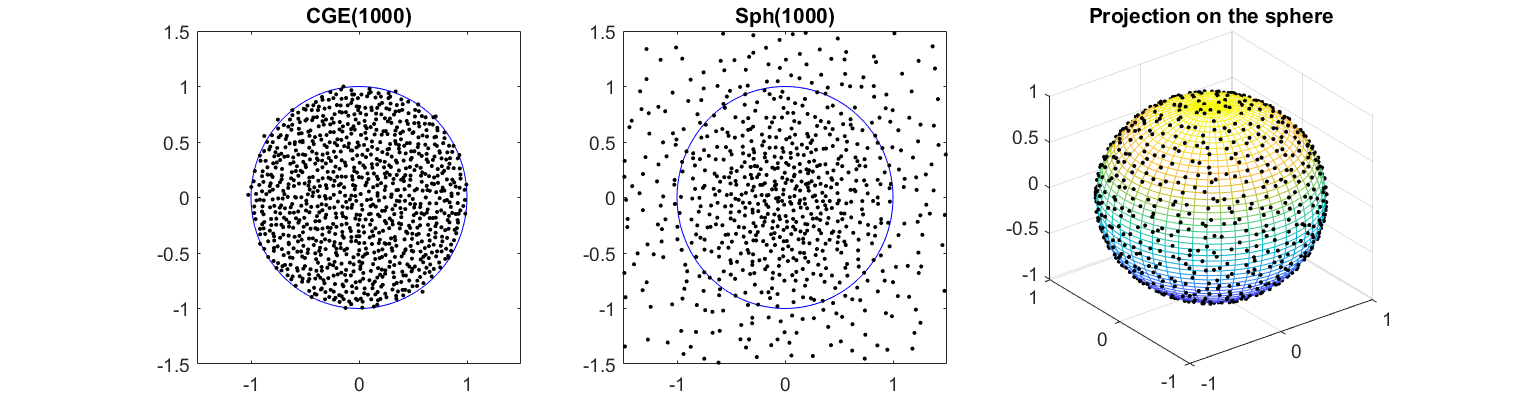}
\caption{Scaled eigenvalues of $\CGE(1000)$ and $\Sph(1000)$; the third picture is the preimage of the latter by the stereographic projection (\ref{stereo_proj}).}
\label{Sph_figure}
\end{figure}

%% Here is the Matlab code.
% \begin{verbatim}
%N=1000;
%G1 = complex(randn(N),randn(N))/sqrt(2*N);
%G2 = complex(randn(N),randn(N))/sqrt(2*N);
%E1=eig(G1);
%E2=eig(G1*(G2^(-1)));
%clf;
%subplot(1,3,1)
%th = 0:pi/50:2*pi;
%xunit = cos(th);
%yunit = sin(th);
%plot(xunit+1i*yunit,'b-'), hold on;
%plot(E1,'.k'); axis([-1.5,1.5,-1.5,1.5]);
%title(['CGE(',num2str(N),')']);
%
%subplot(1,3,2)
%th = 0:pi/50:2*pi;
%xunit = cos(th);
%yunit = sin(th);
%plot(xunit+1i*yunit,'b-'), hold on;
%plot(E2,'.k'); axis([-1.5,1.5,-1.5,1.5]);
%title(['Sph(',num2str(N),')']);
%
%subplot(1,3,3)
%[x,y,z]=sphere(50);
%mesh(x,y,z), hold on;
%X=[]; Y=[]; Z=[];
%for i=1:N
%    z=E2(i);
%    theta=angle(z);
%    phi=2*atan(norm(z));
%    X=[X,sin(phi)*cos(theta)];
%    Y=[Y,sin(phi)*sin(theta)];
%    Z=[Z,cos(phi)];
%end
%lambda=1.01;
%X=lambda*X;
%Y=lambda*Y;
%Z=lambda*Z;
%scatter3(X,Y,Z,'.k')
%title('Projection on the sphere');
%
%saveas(gcf,'Belle_image_Sph')
% \end{verbatim}

\item The truncated unitary ensemble consists of truncations of unitary matrices distributed according to the Haar measure ($\CUE$). It therefore depends on two parameters determining the size of the original CUE matrix and the size of the truncation. We denote by $\TUE(N,M)$ the ensemble of truncations of size $N$ of matrices distributed according to $\CUE(N+M)$. Our results are only valid when $N \leq M$, that is, when the truncated matrix is at most half as large as the original matrix. Both parameters are assumed to go to infinity.
%Two relevant limits are 
%$$
%N \sim \alpha M , 0< \alpha \leq 1, M \rightarrow \infty,
%\quad
%\text{and}
%\quad
%N \rightarrow \infty, \frac{M}{N} \rightarrow \infty.
%$$

\begin{figure}[h!]
\includegraphics[width=\textwidth]{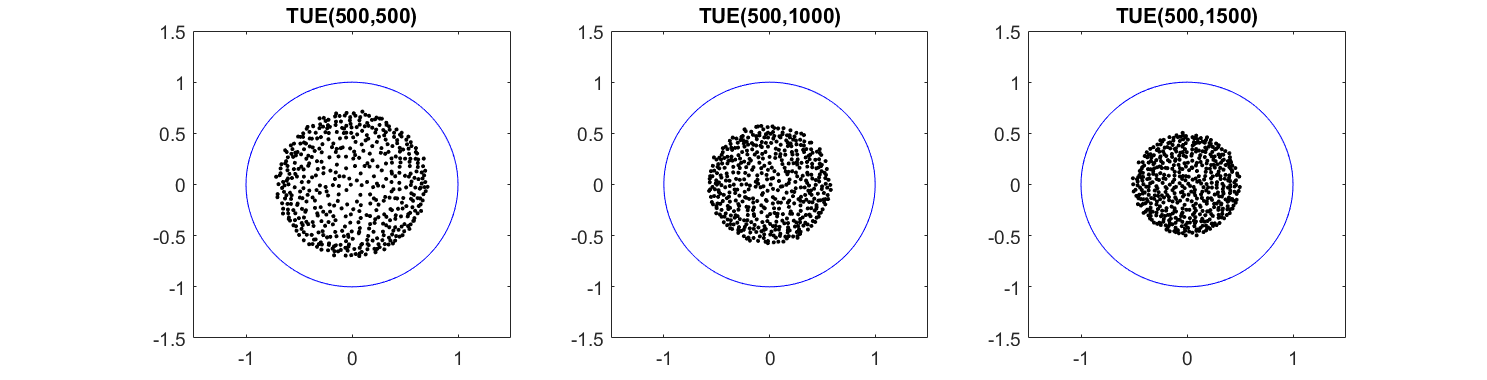}
\caption{Eigenvalues of $\TUE(N,M)$ for $N=500$ and $M=500,1000,1500$.}
\end{figure}

\end{enumerate}

%% Le code Matlab:
%\begin{verbatim}
%N=500; clf;
%M=500;
%X1 = complex(randn(N+M),randn(N+M))/sqrt(2*N);
%[U1,T]=schur(X1);
%U2=U1(1:N,1:N);
%E2=eig(U2);
%
%subplot(1,3,1)
%th = 0:pi/50:2*pi;
%xunit = cos(th);
%yunit = sin(th);
%plot(xunit+1i*yunit,'b-'), hold on;
%plot(E2,'.k'); axis([-1.5,1.5,-1.5,1.5]);
%title(['TUE(',num2str(N),',',num2str(M),')']);
%
%M=1000;
%X1 = complex(randn(N+M),randn(N+M))/sqrt(2*N);
%[U1,T]=schur(X1);
%U2=U1(1:N,1:N);
%E2=eig(U2);
%
%subplot(1,3,2)
%th = 0:pi/50:2*pi;
%xunit = cos(th);
%yunit = sin(th);
%plot(xunit+1i*yunit,'b-'), hold on;
%plot(E2,'.k'); axis([-1.5,1.5,-1.5,1.5]);
%title(['TUE(',num2str(N),',',num2str(M),')']);
%
%M=1500;
%X1 = complex(randn(N+M),randn(N+M))/sqrt(2*N);
%[U1,T]=schur(X1);
%U2=U1(1:N,1:N);
%E2=eig(U2);
%
%subplot(1,3,3)
%th = 0:pi/50:2*pi;
%xunit = cos(th);
%yunit = sin(th);
%plot(xunit+1i*yunit,'b-'), hold on;
%plot(E2,'.k'); axis([-1.5,1.5,-1.5,1.5]);
%title(['TUE(',num2str(N),',',num2str(M),')']);
%
%saveas(gcf,'Belle_image_TUE')
%\end{verbatim}

The reason for treating these two ensembles in the same paper is the strong analogy between them, underlined and exemplified by \cite{ForresterKrishnapur}, that extends to the overlap distribution.  All results are presented in details for the spherical case in Section \ref{spherical_section}, while the corresponding results in the truncated unitary case are found in Section \ref{tue_section} -- with less detail whenever the two computations are exactly the same. 

\subsection{Results}
\label{overlaps_subsec}

The matrix of overlaps associated to the bi-orthogonal family of left and right eigenvectors of a non-Hermitian random matrix has been introduced and studied by Chalker \& Mehlig in \cites{ChaMeh1998,ChaMeh2000}, then more recently in a series of paper involving a variety of methods \cites{WalSta2015, BourgadeDubach, DubachQGE, Fyodorov2018, Akemannetal, CrawfordRosenthal, NowakTarnowski1, NowakTarnowski2}. It is defined as follows: for a given matrix $G \in \mathscr{M}_N(\C)$ with simple spectrum $\{ \la_1, \dots, \la_N \}$ (note that the random spectra we consider are almost surely simple), if $R_j = | R_j \rangle$ is the right eigenvector associated to $\la_j$ and $L_j = \langle L_j |$ the left eigenvector associated to the same eigenvalue, chosen such that for every $j$,
\beq\label{biorthog}
G R_j = \la_j R_j, \quad
L_j G = \la_j L_j, \quad
\langle L_i | R_j \rangle = L_i R_j = \delta_{ij},
\eeq
we define the matrix of overlaps $\Ov$ by
\beq\label{overlaps_definition}
\Ov_{i,j} = \langle L_i | L_j \rangle \langle R_j | R_i \rangle 
= (L_i L_j^*) (R_j^* R_i).
\eeq
Chalker \& Mehlig computed the conditional expectation of the overlaps in the complex Ginibre ensemble and conjectured several of their properties. For a more detailed presentation, see the introduction of \cite{BourgadeDubach} and the appendix of \cite{DubachQGE}. \medskip

The results we obtain in the spherical and truncated unitary cases are analogous to some of the results obtained in \cite{BourgadeDubach} for the complex Ginibre ensemble $\CGE(N)$. We recall these results, and point out which statement of the present paper corresponds to each one.

\begin{enumerate}

\item \textbf{A decomposition of the distribution of diagonal overlaps.} The first notable fact is that, conditionally on the spectrum $\La \in \C^N$, diagonal overlaps can be decomposed as a product of independent variables. In the complex Ginibre ensemble, Theorem 2.2 from \cite{BourgadeDubach} states that, conditionally on the event $\{ \Lambda=(\la_1, \dots, \la_N) \}$, the distribution of diagonal overlaps is given by 
\beq\label{CGE_ov11}
\Ov_{1,1}^{\CGE(N)}
\disteq
\prod_{i=2}^N \left( 1+ \frac{|Z_i|^2}{N|\la_i-\la_1|^2} \right),
\eeq
where $(Z_i)_{i=2}^N$ are i.i.d. standard complex Gaussian. Instead of Gaussian variables, the analogous statements in the spherical and truncated unitary ensembles involve i.i.d. variables whose distribution is specific to each case. Namely, in $\Sph(N)$, we have
\beq\label{sph_ov11_dist}
\Ov_{1,1}^{\Sph(N)} \disteq \prod_{k=2}^N \left( 1+  \frac{(1 +
|\la_1|^2)(1 + |\la_k|^2)}{|\la_1 - \la_k|^2} X_{N}^{(k)} \right)
\eeq
where the $ X_{N}^{(k)}$ are i.i.d. variables whose distribution is defined in (\ref{def_Xm}); and in $\TUE(N,M)$,
\beq\label{tue_ov11_dist}
\Ov_{1,1}^{\TUE(N,M)} \disteq \prod_{k=2}^N \left( 1+  \frac{(1 -
|\la_1|^2)(1 - |\la_k|^2)}{|\la_1 - \la_k|^2} Y_{M}^{(k)} \right)
\eeq
where the $ Y_{M}^{(k)}$ are i.i.d. variables whose distribution is defined in (\ref{def_Ym}). These decompositions are stated as Theorem \ref{spherical_ov11} and \ref{tue_ov11} respectively.

\item \textbf{A limit theorem for diagonal overlaps.}
In the complex Ginibre ensemble, Theorem 1.1 from \cite{BourgadeDubach} states that conditionally on the event $\{ \la_1 = z \}$ with $z \in \D$, the scaled diagonal overlap $\Ov_{1,1}$ converges to the inverse of a $\ga_2$ distribution: 
\beq
\frac{1}{N} \Ov_{1,1}^{\CGE(N)} \distconv
(1 - |z|^2) \frac{1}{\ga_2}.
\eeq

This heavy-tail limit\footnote{or heavy-tail distributions of the same family, such as $\ga_1^{-1}$ and $\ga_4^{-1}$ (see \cite{Fyodorov2018} and \cite{DubachQGE}).} appears to be universal. 

In particular, the exact same convergence holds at the origin for the spherical and truncated unitary ensembles, which is stated as Proposition \ref{spherical_gamma2lim} and \ref{tue_gamma2lim} respectively. Unlike the complex Ginibre case, where $\Ov_{1,1}^{-1}$ follows a beta distribution when $\{ \la_1 = 0 \}$, the distribution of the overlap for fixed $N$ does not take an especially simple form here; nevertheless, the asymptotical result can be worked out in an analogous way.

\begin{figure}[h!]
\includegraphics[width=\textwidth]{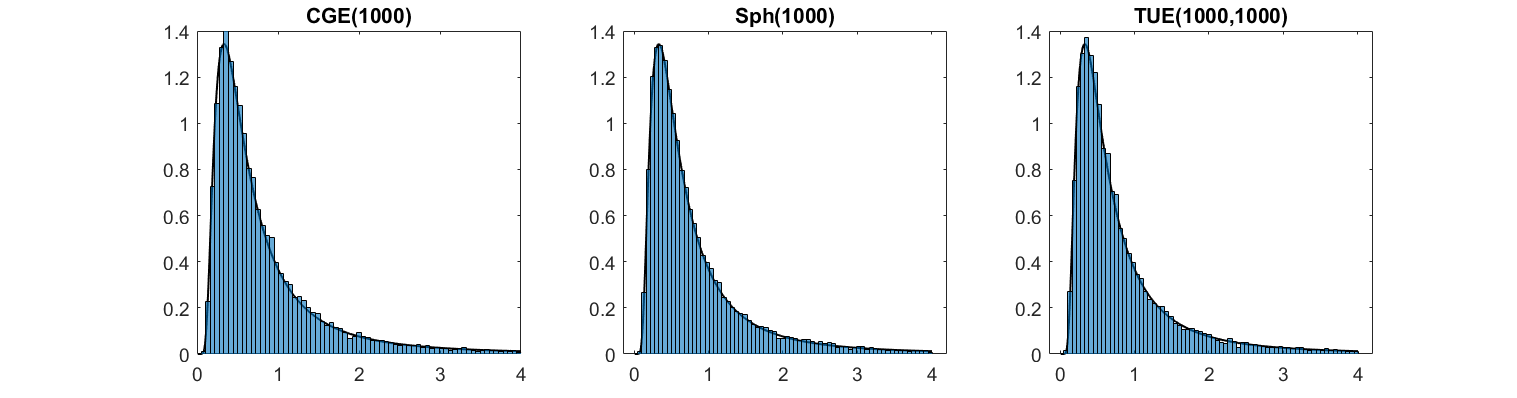}
\caption{Histograms of scaled diagonal overlaps for $\CGE(N), \Sph(N)$ and $\TUE(N,N)$ respectively, with $N=1000$ and over $30$ experiments (for each experiment, the overlaps of all eigenvalues in a given domain, chosen arbitrarily inside the bulk, have been considered).}
\label{Histograms_ov11}
\end{figure}

The specific structure of the spherical ensemble allows one to extend this result to the whole complex plane, yielding the following Theorem. 

\begin{theorem}\label{spherical_limit}
Conditionally on the event $\{ \la_1 = z \}$ with $z \in \C$,
\beq
\E \left( \Ov_{1,1}^{\Sph(N)} \right) = N \quad \text{and} \quad
\frac{1}{N} \Ov_{1,1}^{\Sph(N)} \distconv
 \frac{1}{\ga_2}.
\eeq
\end{theorem}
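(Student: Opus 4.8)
The plan is to leverage the conformal symmetry of $\Sph(N)$ on the Riemann sphere — the structure underlying Proposition~\ref{spherical_name} — to transport the result already established at the origin (Proposition~\ref{spherical_gamma2lim}) to an arbitrary base point $z\in\C$. Fix $z\in\C$ and pick $a,b\in\C$ with $|a|^2+|b|^2=1$ and $b/\bar a=z$; let $\phi(w)=\frac{aw+b}{\bar a-\bar b\,w}$ be the associated element of $PSU(2)$, so $\phi(0)=z$. Writing a $\Sph(N)$ matrix as $A=G_1G_2^{-1}$ with $G_1,G_2$ i.i.d.\ complex Ginibre, one has $\phi(A)=(aA+bI)(\bar aI-\bar bA)^{-1}=(aG_1+bG_2)(\bar aG_2-\bar bG_1)^{-1}$; since the pair $(aG_1+bG_2,\,\bar aG_2-\bar bG_1)$ is obtained from $(G_1,G_2)$ by a unitary rotation of the two Gaussian matrices, it is equidistributed with $(G_1,G_2)$, whence $\phi(A)\disteq A$.

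The key observation is that $\phi$, acting on the matrix $A$, fixes every eigenvector: if $Av=\mu v$ then $\phi(A)v=\phi(\mu)v$, and likewise on the left. Thus $\phi(A)$ has the same diagonalizing basis as $A$ with eigenvalues relabelled $\lambda_j\mapsto\phi(\lambda_j)$, so each $\Ov_{i,j}$ — being invariant under individual rescalings of the $R_j$ — is literally unchanged; in particular the overlap attached to the eigenvalue $z=\phi(0)$ of $\phi(A)$ equals that attached to the eigenvalue $0$ of $A$. Combined with $\phi(A)\disteq A$ this gives
\beq\label{eq:z-invariance}
\big(\,\Ov_{1,1}^{\Sph(N)}\ \big|\ \lambda_1=z\,\big)\ \disteq\ \big(\,\Ov_{1,1}^{\Sph(N)}\ \big|\ \lambda_1=0\,\big)\qquad\text{for every }z\in\C .
\eeq
(The same conclusion also follows from the decomposition (\ref{sph_ov11_dist}) of Theorem~\ref{spherical_ov11}: the coefficient $\tfrac{(1+|\lambda_1|^2)(1+|\lambda_k|^2)}{|\lambda_1-\lambda_k|^2}$ equals $4/d(\lambda_1,\lambda_k)^2$ for $d$ the chordal distance, hence is $PSU(2)$-invariant, the $X_N^{(k)}$ are i.i.d.\ and independent of the spectrum, and the conditional law of $(\lambda_2,\dots,\lambda_N)$ given $\lambda_1=z$ is the pushforward by $\phi$ of that given $\lambda_1=0$.)

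Granting (\ref{eq:z-invariance}), both assertions reduce to the case $z=0$. The convergence $\tfrac1N\Ov_{1,1}^{\Sph(N)}\distconv\ga_2^{-1}$ is exactly Proposition~\ref{spherical_gamma2lim} at $z=0$, where the prefactor $(1-|z|^2)$ is $1$. For the expectation, (\ref{eq:z-invariance}) shows $\E(\Ov_{1,1}^{\Sph(N)}\mid\lambda_1=z)$ does not depend on $z$, hence equals $\E(\Ov_{1,1}^{\Sph(N)}\mid\lambda_1=0)$; this constant is then evaluated from Theorem~\ref{spherical_ov11} by taking the expectation inside the product (using that the $X_N^{(k)}$ are independent of the spectrum and of each other) and integrating the resulting symmetric function against the conditional eigenvalue density at the origin, an Andréief/Selberg-type integral which collapses to $N$. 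I expect the two genuine points to be (i) phrasing the symmetry reduction at the level of regular conditional distributions, since $\{\lambda_1=z\}$ is a null event — here one works with the explicit conditional density and with the a.s.\ invertibility of $\bar aG_2-\bar bG_1$ and of $\bar aI-\bar bA$; and (ii) the closed-form evaluation $\E(\Ov_{1,1}^{\Sph(N)}\mid\lambda_1=0)=N$, which is the main computational obstacle.
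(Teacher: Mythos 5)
Your proof has the same three\--part architecture as the paper's, which assembles Theorem \ref{spherical_limit} from Propositions \ref{sph_expectation_N}, \ref{spherical_gamma2lim} and \ref{spherical_invariance}: (a) invariance of the conditional law of $\Ov_{1,1}$ in $z$, (b) convergence to $\ga_2^{-1}$ at the origin, (c) the exact identity $\E_{\{\la_1=0\}}\Ov_{1,1}=N$. For (a) you argue at the matrix level: the $SU(2)$ rotation $(G_1,G_2)\mapsto(aG_1+bG_2,\,\bar a G_2-\bar b G_1)$ preserves the law of the Gaussian pair, induces the M\"obius map $\phi$ on $A=G_1G_2^{-1}$, and fixes all eigenvectors, hence all overlaps, while moving $0$ to $z$. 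This is correct and arguably cleaner than the paper's route, which works instead at the level of the eigenvalue point process: Proposition \ref{spherical_invariance} observes that $\E_{\La}G(\Ov_{1,1})$ is a function of the chordal distances $l_k=4|\la_1-\la_k|^2/((1+|\la_1|^2)(1+|\la_k|^2))$ and invokes rotation invariance of the process (\ref{point_process_1}) on $\mathbb{S}^2$ --- which is exactly your parenthetical alternative. Your matrix argument buys a statement about the \emph{joint} law of all overlaps and eigenvalues under $\phi$, stronger than what is needed; the paper's version avoids any discussion of eigenvector identification and handles the null\--event conditioning explicitly through the change of variables (\ref{general_invariance}). Either way the technical point you flag (regular conditional distributions on $\{\la_1=z\}$) does need the explicit conditional density, as in the paper.

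The one genuine incompleteness is step (c), which you defer as ``the main computational obstacle'' and describe as a Selberg/Andr\'eief\--type integral. It is in fact much lighter than that, and the paper's Proposition \ref{sph_expectation_N} does it in three lines: at $\la_1=0$ the quenched expectation $\prod_{k=2}^N\bigl(1+\tfrac{1+|\la_k|^2}{N|\la_k|^2}\bigr)$ depends only on the squared moduli, which by the conditioned Kostlan property (Proposition \ref{Kostlan_conditioned}) are \emph{independent} generalized gamma variables $\ga_V(k)$, $2\le k\le N$, with $V(t)=(N+1)\ln(1+t)$; each factor then has expectation
$1+\tfrac1N+\tfrac1N\,\E\bigl(\ga_V(k)^{-1}\bigr)=1+\tfrac1N\bigl(1+\tfrac{N+1-k}{k-1}\bigr)=\tfrac{k}{k-1}$,
and the product telescopes to $N$. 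You should incorporate this (or an equivalent evaluation) to close the argument; as written, the claim $\E_{\{\la_1=0\}}\Ov_{1,1}=N$ is asserted but not proved.
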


It is to be expected that a similar statement holds for $\TUE(N,M)$ in the bulk of its limit density of eigenvalues, with a scaling parameter coherent with the expressions of \cite{NowakTarnowski1}. \medskip

\item \textbf{Conditional expectations of overlaps.} In the complex Ginibre ensemble, it follows from (\ref{CGE_ov11}) that the expectation of diagonal overlaps of $\CGE(N)$ takes the following form:

\beq
\E_{\Lambda}^{\CGE(N)} \left( \Ov_{1,1} \right)
= \prod_{k =2}^N \left( 1+ \frac{1}{N |\la_i - \la_k |^2}\right),
\eeq
which had been obtained earlier by Chalker \& Mehlig \cites{ChaMeh1998,ChaMeh2000} by a direct computation.

Analogous identities derive from equations (\ref{sph_ov11_dist}) and (\ref{tue_ov11_dist}); they are stated in Theorem \ref{spherical_ov11} and \ref{tue_ov11} respectively. Moreover, expressions of the same kind can be obtained for off-diagonal overlaps, although no decomposition in independent variables is known in that case. In the Ginibre ensemble, this yields

\beq
\E_{\Lambda}^{\CGE(N)} \left( \Ov_{1,2} \right)
= - \frac{1}{|\la_1 - \la_2|^2}
\prod_{k =3}^N \left( 1+ \frac{1}{N (\la_1 - \la_k)(\overline{\la_2 -
\la_k})}\right).
\eeq
The analogous results for $\Sph(N)$ and $\TUE(N,M)$ are stated as Theorem \ref{spherical_ov12} and \ref{tue_ov12} respectively. \medskip

\item \textbf{Conditional expectation of mixed moments.} The conditional expectation of $\tr G^* G$ with respect to $\La$ also exhibits a remarkable decomposition in all three ensembles. One reason for considering this particular quantity, which is the simplest 'mixed moment`, is that it is obtained from the eigenvalues and the overlaps by the identity:
\beq\label{linear_rel}
\tr G G^* = \sum_{i,j=1}^N \la_i \overline{\la_j} \Ov_{i,j}.
\eeq
More general mixed moments are linked to the generalized overlaps considered in \cite{CrawfordRosenthal} by similar relations.

In the complex Ginibre case, it suffices to write
\beq\label{Gin_mixedmom}
\tr GG^* = \tr(TT^*) = \sum_{i\leq j} |T_{i,j}|^2
=
\sum_{i=1}^N |\la_i |^2 + \sum_{i < j} |T_{i,j}|^2.
\eeq
The conditional expectation follows immediately, using the fact that the upper-diagonal entries of the Schur transform are Gaussian and independent of the eigenvalues.

The spherical and truncated unitary ensembles yield slightly more intricate expressions, stated as Proposition \ref{spherical_mixedmom} and \ref{tue_mixedmom} respectively. \medskip
\end{enumerate}

We summarize all results relative to (iii) and (iv) in the table below, Section \ref{synoptic_sec}. It follows from (\ref{linear_rel}) that the third column is related to the first two by elementary linear relations -- a fact which is not directly seen from the quenched expressions.

\afterpage{%
\clearpage% Flush earlier floats (otherwise order might not be correct)
\thispagestyle{empty}% empty page style
\begin{landscape}% Landscape page
\subsection{Synoptic table of conditional expectations}\label{synoptic_sec} \small
\begin{center}
\begin{tabular}{|l|c|c|c|}
\cline{2-4}
\multicolumn{1}{c|}{} &  &  & \\
\multicolumn{1}{c|}{}
& $\E_{\Lambda} \left( \Ov_{1,1} \right)$
& $\E_{\Lambda} \left( \Ov_{1,2} \right)$
& $\E_{\La} \left( \frac{1}{N} \tr GG^* \right)  $  \\
\multicolumn{1}{c|}{} &  & &  \\
  \hline
  &  &  &  \\
Complex Ginibre &  &  &  \\
$\CGE(N)$ & $\displaystyle{\prod_{k=2}^N \left(1+ \frac{1 }{N |\la_1 -
\la_k|^2} \right)}$
& $\displaystyle{- \frac{1}{N|\la_1 - \la_2|^2}  \prod_{k=3}^N \left( 1+
\frac{1}{N (\la_1 - \la_k)(\overline{\la_2 - \la_k})} \right)}$
& $\displaystyle{\frac{1}{N} \sum_{k=1}^N |\la_k|^2 + \frac{N-1}{2N}}$ \\
 &  &  &  \\
  & Chalker \& Mehlig \cites{ChaMeh1998, ChaMeh2000}
  & Chalker \& Mehlig \cites{ChaMeh1998, ChaMeh2000}
  & From eq. (\ref{Gin_mixedmom})\\
 & & & \\
  \hline
 &  &  &  \\
{\footnotesize Spherical Ensemble} & & & \\
 $\Sph(N)$  & $\displaystyle{\prod_{k=2}^N \left(1+ \frac{(1 + |\la_1|^2)
(1 + |\la_k|^2)}{N |\la_1 - \la_k|^2} \right)}$
& $\displaystyle{- \frac{1}{N|\la_1 - \la_2|^2}  \prod_{k=3}^N \left( 1+
\frac{(1 + \la_1 \overline{\la_2}) (1 + |\la_k|^2)}{N (\la_1 -
\la_k)(\overline{\la_2 - \la_k})} \right)}$
& $\displaystyle{ \prod_{k=1}^N \left(1+ \frac{1 + |\la_k|^2}{N} \right)
-2 }$  \\
 &  &  &  \\
 &  Theorem \ref{spherical_ov11}
  & Theorem \ref{spherical_ov12}
  & Proposition \ref{spherical_mixedmom}\\
 & & & \\
 \hline
 &  &  &  \\
{\footnotesize Truncated Unitary} &  &  &  \\
$\TUE(N,M)$ & $\displaystyle{\prod_{k=2}^N \left(1+ \frac{(1 -
|\la_1|^2)(1 - |\la_k|^2)}{M |\la_1 - \la_k|^2} \right)}$
& $\displaystyle{- \frac{1}{M |\la_1 - \la_2|^2}  \prod_{k=3}^N \left( 1+
\frac{(1 - \la_1 \overline{\la_2}) (1 - |\la_k|^2)}{M (\la_1 -
\la_k)(\overline{\la_2 - \la_k})} \right)}$
& $\displaystyle{\prod_{i=1}^N \left( 1 + \frac{1- |\la_i|^2}{M} \right)
- \left( 1+ \frac{N}{M}\right)}$ \\
 &  &  &  \\
  &  Theorem \ref{tue_ov11}
    & Theorem \ref{tue_ov12}
    & Proposition \ref{tue_mixedmom}\\
 & & & \\
\hline
\end{tabular}
\captionof{table}{Quenched expectations.}
\end{center}
The above table contains the expression of quenched expectations (that is, conditional expectations with respect to $\La=\{ \la_1, \dots, \la_N \}$) of overlaps in the three relevant integrable ensembles. The first column is the consequence of an identity in distribution involving independent variables; not so for off-diagonal overlaps (second column). The third column is related to the first two by the linear relations implied by (\ref{linear_rel}).
\end{landscape}
\clearpage% Flush page
}

\newpage

\subsection{Method, notations and conventions}

\subsubsection{Overlaps and Schur form.}

We recall here only what is needed in order to follow the method we apply to the spherical and truncated unitary cases. \medskip

We first note that the conditions (\ref{biorthog}) can be achieved by choosing $R_i$ as the columns of $P$ and $L_i$ as the rows of $P^{-1}$ for a given diagonalization $G = P \Delta P^{-1}$; the overlaps are independent of this choice. Moreover, overlaps are unchanged by an unitary change of basis, and therefore one can study directly the overlaps of the Schur form
\beq\label{schur_form}
T = U^* G U =  \left( \begin{array}{cccc}
\lambda_1 & T_{1,2} & \dots & T_{1,N} \\
0 & \lambda_2 & \dots & T_{2,N} \\
 \vdots & \ddots & \ddots & \vdots \\
0 & \dots & 0 & \lambda_N \\
 \end{array} \right).
\eeq
By exchangeability of the eigenvalues, we can also limit ourselves to studying the variables $\Ov_{1,1}$ and $\Ov_{1,2}$, whose definitions only involve the first two left and right eigenvectors of $T$, chosen such that
\begin{align*}
R_1=(1,0, \dots, 0)^{\rm t}, & \qquad R_2=(a,1,0,\dots, 0)^{\rm t}, \\
L_1=(b_1, \dots, b_N), & \qquad
L_2=(d_1,\dots, d_N).
\end{align*}
Biorthogonality (\ref{biorthog}) gives $b_1=1$, $d_1=0$, $d_2=1$ and
$a=-b_2$.
Thanks to the upper-triangular form of $T$, the coefficients $b_i, d_i$ are obtained according to a straightforward recurrence. Indeed, if we consider the sequences of sub-vectors:
\begin{align*}
B_k &= (1, b_2,\dots ,b_k) \qquad \text{so that } L_1=B_N, \\
D_k &= (0, 1, d_3,\dots , d_k) \qquad \text{so that } L_2=D_N, \\
u_k &= (T_{1,k},\dots ,T_{k-1,k})^{\rm t} \qquad \text{(subset of the
$k$th column of $T$)}.
\end{align*}
The recurrence formula is
\beq\label{vect_recurrence}
\left\{
\begin{array}{rl}
b_{n+1} & = \frac{1}{\la_1 - \la_{n+1}} B_n u_{n+1},\quad n\geq 1, \\
& \\
d_{n+1} & = \frac{1}{\la_2 - \la_{n+1}}
D_n u_{n+1},\quad n\geq 2.
\end{array}
\right.
\eeq
The first overlaps, according to (\ref{overlaps_definition}), are then given by the expressions
\begin{equation}
\Ov_{1,1} = \sum_{i=1}^N |b_i|^2, \qquad
\Ov_{1,2} = - \overline{b_2} \sum_{i=1}^N b_i \overline{d_i}.
\end{equation}
The reason why the recurrence (\ref{vect_recurrence}) leads to a decomposition in distribution (resp. a decomposition of the conditional expectation with respect to all eigenvalues) of the overlaps in different ensembles is that the distribution of the Schur form is known and allows to perform such a computation explicitly. For instance, in the complex Ginibre case, the upper-triangular entries $(T_{i,j})_{i<j}$ are i.i.d. complex Gaussian variables with variance $1/N$, so that $u_{k+1}$ is a $k$-dimensional Gaussian vector with independent coordinates, and independent of $u_2, \dots, u_k$. The Schur forms of $\Sph(N)$ and $\TUE(N,M)$ have explicit densities expressed in the form of a determinant; a structure which allows an analogous analysis.

\subsubsection{Notations and conventions.}
Throughout the paper, $N$ is the size of the system (i.e. the number of eigenvalues);  the spectrum is $\La = \La_N = (\la_1, \dots, \la_N)$. For any $n \leq N$, we denote by $T_n$ the $n \times n$ top-left submatrix of the Schur form $T$, and by $u_n$ the first $n-1$ coordinates of the last column vector of $T_n$, so that
$$
% \forall n \leq N, \quad
T_n = 
\left(
\begin{array}{cc}
    T_{n-1} & u_n  \\
    0 & \la_n 
\end{array}
\right), \qquad T=T_N.
$$
 $\E_{A}$ denotes the conditional expectation with respect to $A$ (if $A$ is a random variable or a sigma algebra), or the expectation for the conditional probability (if $A$ is an event); the context should prevent any ambiguity to arise. In particular, $\E_{\Lambda}$ is the conditional expectation with respect to the spectrum $\La$. When conditioning on $\La$, we will also use the following filtration, adapted to the nested structure of the Schur transform: 
$$
\mathscr{F}_n=\sigma\left(u_k, 2 \leq k\leq n \right)=\sigma\left(T_{i,j},1\leq i < j\leq n\right).
$$
(This convention differs from the one chosen in \cite{BourgadeDubach}. In particular, $b_2=\frac{T_{12}}{\lambda_1-\lambda_2}\in\mathscr{F}_2$, and $\mathscr{F}_1$ is trivial.)
With any suitable function $V$, the generalized Gamma and Meijer functions are defined as
$$
\Gamma_V(\alpha) := \int_{\R_+} t^{\al - 1} e^{-V(t)} \dd t,
\qquad
G_{V} (k) = \Gamma_V(1) \cdots \Gamma_V(k).
$$
We also define the partial sums
$$
e_V^{(m)}(X) = \sum_{k=0}^m \frac{X^k}{\Gamma_V (k+1)},
$$
and the generalized Gamma distributions $\ga_V(\al)$, with density
\beq\label{gammaVdist}
\frac{1}{\Gamma_V(\al)}
t^{\al-1} e^{-V(t)} \mathds{1}_{\R_+}
\eeq
with respect to the Lebesgue measure. We will use the fact, established for instance in \cite{DubachPowers, HKPV, Kostlan}, that a point process in $\C$ with joint density given by
\beq\label{density_general_V}
\frac{1}{Z_N} \prod_{1 \leq i<j \leq N} |\la_i - \la_j|^2 e^{- \sum_{i=1}^N V(|\la_i|^2)} 
\eeq
where $ Z_N = G_V(N)$, is such that the following identity in distribution holds:
\begin{proposition}[Kostlan's property]\label{Kostlan}
$\{|\la_1|^2, \dots, |\la_N|^2 \} \disteq \{ \ga_{V}(1), \dots, \ga_{V}(N) \} $ where the latter variables are independent, and $\ga_V(k)$ is distributed according to (\ref{gammaVdist}) with $\al=k$.
\end{proposition}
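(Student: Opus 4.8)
The plan is the classical argument of Kostlan, exploiting the rotational invariance of the density (\ref{density_general_V}) together with an elementary expansion of the squared Vandermonde into a permanent. First I would write each point in polar form $\la_j=\sqrt{s_j}\,e^{\ii\theta_j}$ with $s_j=|\la_j|^2\ge 0$ and $\theta_j\in[0,2\pi)$; since the area element is $d^2\la_j=\tfrac12\,ds_j\,d\theta_j$, the family $(s_1,\dots,s_N,\theta_1,\dots,\theta_N)$ has joint density proportional to $\prod_{i<j}|\la_i-\la_j|^2\prod_i e^{-V(s_i)}$ against $\prod_i ds_i\,d\theta_i$ on $\R_+^N\times[0,2\pi)^N$. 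The angular variables occur only in the Vandermonde factor, so the first step is to integrate them out.

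For that, expand the Vandermonde determinant, $\prod_{i<j}(\la_j-\la_i)=\det\bigl(\la_i^{\,j-1}\bigr)_{i,j=1}^{N}=\sum_{\sigma\in S_N}\operatorname{sgn}(\sigma)\prod_i\la_i^{\,\sigma(i)-1}$, so that
\[
\prod_{i<j}|\la_i-\la_j|^2=\sum_{\sigma,\tau\in S_N}\operatorname{sgn}(\sigma)\operatorname{sgn}(\tau)\prod_{i=1}^{N}s_i^{(\sigma(i)+\tau(i))/2-1}\,e^{\ii(\sigma(i)-\tau(i))\theta_i}.
\]
Integrating each $\theta_i$ over $[0,2\pi)$ kills every term with $\sigma(i)\ne\tau(i)$ for some $i$, hence every term with $\sigma\ne\tau$; since $\operatorname{sgn}(\sigma)^2=1$, what remains — up to the constant $(2\pi)^N$, absorbed into the normalization — is $\sum_{\sigma\in S_N}\prod_i s_i^{\,\sigma(i)-1}=\operatorname{per}\bigl((s_i^{\,j-1})_{i,j=1}^{N}\bigr)$. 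Thus the unordered collection $\{\,|\la_1|^2,\dots,|\la_N|^2\,\}$ has joint density proportional to $\operatorname{per}\bigl((s_i^{\,j-1})_{i,j}\bigr)\prod_i e^{-V(s_i)}\,\mathds{1}_{\R_+^N}$.

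It then remains to recognize this as the law of an unordered family of independent generalized Gammas. Let $f_k(t)=\Gamma_V(k)^{-1}t^{k-1}e^{-V(t)}\mathds{1}_{\R_+}$ be the density (\ref{gammaVdist}) of $\ga_V(k)$. Because $\{\sigma(1),\dots,\sigma(N)\}=\{1,\dots,N\}$ for every $\sigma\in S_N$, one gets the factorization $\prod_i s_i^{\,\sigma(i)-1}e^{-V(s_i)}=\bigl(\prod_{k=1}^N\Gamma_V(k)\bigr)\prod_i f_{\sigma(i)}(s_i)=G_V(N)\prod_i f_{\sigma(i)}(s_i)$, and summing over $\sigma$,
\[
\operatorname{per}\bigl((s_i^{\,j-1})_{i,j}\bigr)\prod_i e^{-V(s_i)}=N!\,G_V(N)\cdot\frac1{N!}\sum_{\sigma\in S_N}\prod_i f_{\sigma(i)}(s_i).
\]
The last factor $\frac1{N!}\sum_\sigma\prod_i f_{\sigma(i)}(s_i)$ is exactly the symmetrization of the product density $\prod_i f_i(s_i)$, i.e. the density of the unordered set $\{Y_1,\dots,Y_N\}$ for independent $Y_k\sim\ga_V(k)$. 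Matching normalizations — which amounts to the one-line identity $\int_{\R_+^N}\operatorname{per}\bigl((s_i^{\,j-1})_{i,j}\bigr)\prod_i e^{-V(s_i)}\prod_i ds_i=\sum_\sigma\prod_i\Gamma_V(\sigma(i))=N!\,G_V(N)$, consistent with the constant $Z_N$ in (\ref{density_general_V}) — then gives $\{\,|\la_1|^2,\dots,|\la_N|^2\,\}\disteq\{\,\ga_V(1),\dots,\ga_V(N)\,\}$ as unordered families, as claimed.

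I do not expect a genuine obstacle here: the proof is short and self-contained. The only point requiring care is bookkeeping — tracking the Jacobian $2^{-N}$, the factor $(2\pi)^N$ from the angular integrals, and the $N!$ from symmetrization, and checking that these combine with the normalization $Z_N$ so that both sides are honest probability densities (which reduces to $\int_{\R_+}t^{k-1}e^{-V(t)}\,dt=\Gamma_V(k)$). One should also note at the outset that the process is exchangeable, so that equality of the unordered configurations is the natural and strongest formulation. Alternatively, the statement may simply be quoted from \cite{Kostlan}, \cite{HKPV} or \cite{DubachPowers}.
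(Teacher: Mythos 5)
The paper offers no proof of this proposition at all: it is explicitly quoted from the literature (\cite{Kostlan}, \cite{HKPV}, \cite{DubachPowers}), so there is no internal argument to compare against, and your closing remark that one may simply cite these references is exactly what the author does. That said, your self-contained proof is the classical one from those references and it is correct: pass to polar coordinates $\la_j=\sqrt{s_j}e^{\ii\theta_j}$, expand $\prod_{i<j}|\la_i-\la_j|^2$ over pairs of permutations $(\sigma,\tau)$, integrate out the angles to kill every term with $\sigma\neq\tau$, and recognize the surviving permanent times $\prod_i e^{-V(s_i)}$ as $N!\,G_V(N)$ times the symmetrization of $\prod_i f_i(s_i)$, where $f_k$ is the density (\ref{gammaVdist}) of $\ga_V(k)$; since the process is exchangeable, this is precisely the law of the unordered family $\{\ga_V(1),\dots,\ga_V(N)\}$. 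The bookkeeping you flag (the Jacobian $2^{-N}$, the $(2\pi)^N$ from the angular integrals, the $N!$ from symmetrization) is harmless because it is absorbed in the final normalization; the one small friction is that your computation yields the total mass $\pi^N N!\,G_V(N)$ for the unnormalized density, whereas the paper announces $Z_N=G_V(N)$ --- a looseness in the paper's display (\ref{density_general_V}), not an error in your argument. It is also worth noting that what the paper actually uses downstream is the conditioned version, Proposition \ref{Kostlan_conditioned}; your argument adapts verbatim, since conditioning on $\{\la_1=0\}$ replaces the weight $e^{-V(s_i)}$ by $s_i\,e^{-V(s_i)}$ for $i\geq 2$ and shifts every index by one, producing $\{\ga_V(2),\dots,\ga_V(N)\}$.
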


What we need here is a specific form of Kostlan's property, obtained by applying Proposition \ref{Kostlan} to the conditioned measure.

\begin{proposition}\label{Kostlan_conditioned} Conditionally on the event $\{ \la_1 = 0 \}$, $\{|\la_2|^2, \dots, |\la_N|^2 \} \disteq \{ \ga_{V}(2), \dots, \ga_{V}(N) \} $ where the latter variables are independent, and $\ga_V(k)$ is distributed according to (\ref{gammaVdist}) with $\al=k$.
\end{proposition}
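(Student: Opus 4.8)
The plan is to reduce Proposition \ref{Kostlan_conditioned} to Proposition \ref{Kostlan} applied to an $(N-1)$-point process with a shifted potential. First I would write down the conditional law of $(\la_2, \dots, \la_N)$ given $\{\la_1 = 0\}$. Since the joint density (\ref{density_general_V}) is jointly continuous in all variables, disintegrating with respect to $\la_1$ shows that, conditionally on $\{\la_1 = z\}$, the vector $(\la_2, \dots, \la_N)$ has density proportional in the remaining variables to
$$
\prod_{2 \leq i<j \leq N} |\la_i - \la_j|^2 \ \prod_{j=2}^N |z - \la_j|^2 \ e^{- \sum_{i=2}^N V(|\la_i|^2)},
$$
the factor $e^{-V(|z|^2)}$ being absorbed into the normalizing constant. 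Setting $z = 0$, the cross term collapses to $\prod_{j=2}^N |\la_j|^2$.

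Next I would absorb this extra factor into the potential: writing $\tilde V(t) := V(t) - \log t$, one has $\prod_{j=2}^N |\la_j|^2 \, e^{- \sum_{i=2}^N V(|\la_i|^2)} = e^{- \sum_{i=2}^N \tilde V(|\la_i|^2)}$. Hence the conditional law of $(\la_2, \dots, \la_N)$ is exactly of the form (\ref{density_general_V}) with $N$ replaced by $N-1$ and $V$ replaced by $\tilde V$; in particular the normalizing constant is forced to equal $G_{\tilde V}(N-1)$. Proposition \ref{Kostlan} then gives $\{ |\la_2|^2, \dots, |\la_N|^2 \} \disteq \{ \ga_{\tilde V}(1), \dots, \ga_{\tilde V}(N-1) \}$ with independent coordinates.

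Finally I would identify $\ga_{\tilde V}(k)$ with $\ga_V(k+1)$. From the definitions, $\Gamma_{\tilde V}(k) = \int_{\R_+} t^{k-1} e^{-\tilde V(t)} \, dt = \int_{\R_+} t^{k} e^{-V(t)} \, dt = \Gamma_V(k+1)$, and correspondingly the density (\ref{gammaVdist}) of $\ga_{\tilde V}(k)$, proportional to $t^{k-1} e^{-\tilde V(t)} = t^{k} e^{-V(t)}$ on $\R_+$, is precisely that of $\ga_V(k+1)$. Relabeling $k \mapsto k-1$ then yields $\{ |\la_2|^2, \dots, |\la_N|^2 \} \disteq \{ \ga_V(2), \dots, \ga_V(N) \}$, as claimed.

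The main obstacle is the only genuinely delicate point: making rigorous the conditioning on the measure-zero event $\{\la_1 = 0\}$ and the resulting "density proportional to\,$\dots$" statement. This is routine once one notes that (\ref{density_general_V}) is jointly continuous and that $V$ is such that the relevant integrals $\Gamma_V(\al)$ (and hence $\Gamma_{\tilde V}(k) = \Gamma_V(k+1)$) are finite, so that the conditional density is the evident one and the new normalization $G_{\tilde V}(N-1) = \Gamma_V(2)\cdots\Gamma_V(N)$ matches the product of the normalizations of the independent $\ga_V(k)$. Everything else is bookkeeping.
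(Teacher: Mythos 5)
Your proof is correct and is precisely the argument the paper intends: the paper gives no detailed proof, saying only that the proposition is ``obtained by applying Proposition \ref{Kostlan} to the conditioned measure,'' and your absorption of the cross term $\prod_{j\ge 2}|\la_j|^2$ into the shifted potential $\tilde V(t)=V(t)-\log t$, followed by the identification $\ga_{\tilde V}(k)\disteq\ga_V(k+1)$, is exactly that reduction carried out in full. Nothing further is needed.
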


In the complex Ginibre ensemble, the scaled variables $N \gamma_V$ follow usual gamma distributions. \medskip

Other notations or conventions relatives specifically to the spherical or truncated unitary case are mentioned in the corresponding section.

\newpage
\section{Spherical ensemble}\label{spherical_section}

This section contains the proof of all claims related to the spherical ensemble $\Sph(N)$. These proofs rely on a few estimates that are found in Subsection \ref{spherical_estimates}.

\subsection{Schur form and eigenvalues}

We first present a few general results in order to illustrate the method; the tools and definitions that follow are specific to the spherical case. We recall that the Schur transfom $T$ of a matrix from $\Sph(N)$ is distributed with density proportional to
\beq\label{sph_Schur_density}
\prod_{1 \leq i<j \leq N} |\la_i - \la_j|^2
\frac{1}{\det(I_N + T T^*)^{2N}}
\eeq
with respect to the Lebesgue measure on all complex matrix elements, diagonal ($\dd \La = \dd \la_1 \cdots \dd \la_N$) and upper-triangular ($\dd u_2 \cdots \dd u_n$).  \medskip
We introduce the Hermitian, definite-positive matrices
\beq\label{def_HnSn}
H_n := I_n+T_n T_n^*,
\qquad
S_{n-1}:= (1+|\la_n|^2)^{1/2} H_{n-1}^{1/2}.
\eeq
The following lemma is the essential tool used in \cite{ForresterKrishnapur}.

\begin{lemma}\label{FK_trick} The determinant of $H_n = I_n+T_n T_n^*$ can be reccursively decomposed as
\begin{align}
\det(H_n)
= &
(1+|\la_n|^2)
\det(H_{n-1})
\left( 1+ \frac{1}{1+|\la_n|^2} u_n^* H_{n-1}^{-1} u_n \right).
\end{align}
\end{lemma}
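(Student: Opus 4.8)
The plan is to expand $H_n$ in block form according to the nested structure of the Schur transform recalled above, and then combine the Schur-complement formula for a block determinant with the rank-one matrix determinant lemma. This is essentially the computation of \cite{ForresterKrishnapur}; I spell it out for completeness.

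First I would use $T_n = \left(\begin{smallmatrix} T_{n-1} & u_n \\ 0 & \la_n \end{smallmatrix}\right)$ to compute $T_n T_n^*$ blockwise, obtaining
\[
H_n = I_n + T_n T_n^* = \left( \begin{array}{cc} H_{n-1} + u_n u_n^* & \bar{\la}_n\, u_n \\ \la_n\, u_n^* & 1+|\la_n|^2 \end{array} \right),
\]
where I have identified the top-left block $I_{n-1} + T_{n-1}T_{n-1}^* + u_n u_n^*$ with $H_{n-1} + u_n u_n^*$. Since the bottom-right block is the nonzero scalar $1+|\la_n|^2$, the classical identity $\det\left(\begin{smallmatrix} A & B \\ C & D\end{smallmatrix}\right) = \det(D)\det(A - B D^{-1} C)$ (valid here because $D$ is invertible) gives
\[
\det H_n = (1+|\la_n|^2)\,\det\!\left( H_{n-1} + u_n u_n^* - \tfrac{|\la_n|^2}{1+|\la_n|^2}\, u_n u_n^* \right) = (1+|\la_n|^2)\,\det\!\left( H_{n-1} + \tfrac{1}{1+|\la_n|^2}\, u_n u_n^* \right).
\]

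The remaining determinant is a rank-one perturbation of $H_{n-1}$, which is positive definite, hence invertible; so the matrix determinant lemma applied with the vector $v = (1+|\la_n|^2)^{-1/2} u_n$ yields
\[
\det\!\left( H_{n-1} + v v^* \right) = \det(H_{n-1})\bigl( 1 + v^* H_{n-1}^{-1} v \bigr) = \det(H_{n-1})\Bigl( 1 + \tfrac{1}{1+|\la_n|^2}\, u_n^* H_{n-1}^{-1} u_n \Bigr),
\]
and substituting this back into the previous display produces exactly the claimed recursion.

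There is no genuine obstacle here: the only points requiring attention are the blockwise computation of $T_n T_n^*$ — in particular keeping the $u_n u_n^*$ contribution to the top-left block and tracking the cross terms $\bar{\la}_n u_n$, $\la_n u_n^*$ — and the observation that $H_{n-1}$ is invertible, so that both the Schur-complement identity and the determinant lemma apply. (One could alternatively factor $H_n$ directly as a product of block lower-triangular, block diagonal, and block upper-triangular matrices and read off both classical identities at once, but the two-step route above is the most economical.)
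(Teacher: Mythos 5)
Your proof is correct and follows essentially the same route as the paper: the paper's ``elementary operations on columns'' is exactly your Schur-complement step, and its appeal to Sylvester's identity $\det(I+AB)=\det(I+BA)$ after factoring out $\det(H_{n-1})$ is precisely the matrix determinant lemma you invoke. No gaps.
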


\begin{proof}
We first write
\begin{equation*}
\det ( H_n )
=
\left|
\begin{array}{cc}
    I_{n-1}+T_{n-1}T_{n-1}^* + u_n u_n^* & \overline{\la_n} u_n  \\
    \la_n u_n^* & 1+|\la_n|^2 
\end{array}
\right|.
\end{equation*}
Elementary operations on columns brings this matrix to an upper-triangular form, so that
\begin{align*}
\det (H_n)
& =
(1+|\la_n|^2 )
\det \left( I_{n-1}+T_{n-1}T_{n-1}^* + \frac{1}{1+|\la_n|^2} u_n u_n^* \right) \\
& = (1+|\la_n|^2 ) \det (H_{n-1})
\det \left( I_{n-1} + \frac{1}{1+|\la_n|^2} u_n u_n^* H_{n-1}^{-1} \right).
\end{align*}
The claim follows by Sylvester's identity, $\det(I+AB) = \det (I+BA)$.
\end{proof}

For any $p > n$, we denote by $\bV_p^{(n)}$ a random vector with density
\beq\label{def_Vpn}
\frac{1}{C_{n,p}}
\frac{1}{(1+v^* v)^p}
\eeq
with respect to the Lebesgue measure on $\C^n$; the value of $C_{n,p}$ is given by (\ref{sph_ct0}). For any $m \geq 0$, we denote by $X_m$ a real random variable with density
\beq\label{def_Xm}
\frac{m+1}{(1+x)^{m+2}} \mathds{1}_{\R_+}
\eeq
with respect to the Lebesgue measure. In particular $\E X_m = \frac{1}{m}$, and if $v_i$ is a coordinate of $\bV_p^{(n)}$, it follows from Lemma \ref{spherical_distributions} that
$$
|v_i|^2 \disteq X_{p-n-1}.
$$
Note that the i.i.d. variables that appear in Theorem \ref{spherical_ov11} follow the distribution of $X_m$ with $m=N$.

\begin{lemma}\label{sph_key_prop}
Identity holds between the following expressions, for $p \geq n$ and $f, g$ integrable functions of the matrix elements: 
\begin{align*}
\int  \frac{f(\La_n,u_2, \dots, u_{n-1}) g (u_{n})}{\det ( H_n )^{p}}
 \dd T_n
 =
C_{n-1,p}
\int \frac{f(\La_n,u_2, \dots, u_{n-1}) \E \left( g(S_{n-1} \bV_p^{(n-1)}) \right) }{(1+|\la_n|^2)^{p-n+1} \det ( H_{n-1} )^{p-1}}
\dd T_{n-1} \dd \lambda_{n},
\end{align*}
where
$ H_{n}, S_{n-1}, \bV_p^{(n)} $
are defined in (\ref{def_HnSn}) and (\ref{def_Vpn}).
\end{lemma}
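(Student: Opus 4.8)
The plan is to compute the left-hand integral by isolating the last column $u_n$ and applying Lemma \ref{FK_trick} to the denominator. Writing $\det(H_n)^{-p}$ via the recursive decomposition,
\[
\frac{1}{\det(H_n)^p}
=
\frac{1}{(1+|\la_n|^2)^p \det(H_{n-1})^p}
\cdot
\frac{1}{\left( 1+ \frac{1}{1+|\la_n|^2} u_n^* H_{n-1}^{-1} u_n \right)^p},
\]
so that, at fixed $T_{n-1}$ and $\la_n$, the only dependence on $u_n$ sits in the explicit function $g(u_n)$ and in the last factor above. I would therefore first carry out the inner integral over $u_n \in \C^{n-1}$, namely
\[
\int_{\C^{n-1}}
\frac{g(u_n)}{\left( 1+ \frac{1}{1+|\la_n|^2} u_n^* H_{n-1}^{-1} u_n \right)^p}
\, \dd u_n,
\]
and recognize it, after a linear change of variables, as a constant times an expectation against the density (\ref{def_Vpn}).

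The key computation is the change of variables $u_n = S_{n-1} v$, where $S_{n-1} = (1+|\la_n|^2)^{1/2} H_{n-1}^{1/2}$ as in (\ref{def_HnSn}). Since $S_{n-1}$ is Hermitian positive-definite, this is a valid linear bijection of $\C^{n-1}$ with (real) Jacobian $\det(S_{n-1})^2 = (1+|\la_n|^2)^{n-1} \det(H_{n-1})$; and one checks directly that
\[
\frac{1}{1+|\la_n|^2}\, u_n^* H_{n-1}^{-1} u_n
= v^* S_{n-1} \tfrac{1}{1+|\la_n|^2} H_{n-1}^{-1} S_{n-1} v
= v^* v.
\]
Hence the inner integral equals $(1+|\la_n|^2)^{n-1}\det(H_{n-1}) \int_{\C^{n-1}} g(S_{n-1}v)(1+v^*v)^{-p}\,\dd v$, which by the definition of $C_{n-1,p}$ in (\ref{def_Vpn}) is exactly $(1+|\la_n|^2)^{n-1}\det(H_{n-1})\, C_{n-1,p}\, \E\big(g(S_{n-1}\bV_p^{(n-1)})\big)$. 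Multiplying back by the prefactor $(1+|\la_n|^2)^{-p}\det(H_{n-1})^{-p}$ collapses the powers to $(1+|\la_n|^2)^{-(p-n+1)}\det(H_{n-1})^{-(p-1)}$, which is precisely the integrand on the right-hand side; the leftover integration variables are $T_{n-1}$ (i.e. $\La_n$ restricted to the first $n-1$ eigenvalues together with $u_2,\dots,u_{n-1}$) and $\la_n$, and $f$ does not involve $u_n$, so it passes through untouched. Relabelling $\dd T_n = \dd T_{n-1}\,\dd\la_n\,\dd u_n$ finishes the identity, and one should note the hypothesis $p \ge n$ guarantees convergence of the $u_n$-integral (the density (\ref{def_Vpn}) requires $p > n-1$).

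The only genuine subtlety — and the step I expect to need the most care — is the justification of the change of variables and of Fubini's theorem: one must check that $f$, $g$ are such that the iterated integral is absolutely convergent so that the order of integration may be swapped, and that $H_{n-1}$ (a measurable function of $T_{n-1}$) is almost everywhere positive-definite so that $H_{n-1}^{1/2}$, and hence $S_{n-1}$, is well defined. Both are routine given that $H_{n-1} = I_{n-1} + T_{n-1}T_{n-1}^*$ is deterministically positive-definite and that $f,g$ are assumed integrable, but they are worth stating explicitly. Everything else is bookkeeping of exponents, which the decomposition of Lemma \ref{FK_trick} makes automatic.
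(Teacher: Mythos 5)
Your proof is correct and follows essentially the same route as the paper: apply Lemma \ref{FK_trick} to factor $\det(H_n)^{-p}$, change variables $u_n = S_{n-1}v$ with real Jacobian $\det(S_{n-1})^2=(1+|\la_n|^2)^{n-1}\det(H_{n-1})$, and identify the resulting $v$-integral as $C_{n-1,p}\,\E\bigl(g(S_{n-1}\bV_p^{(n-1)})\bigr)$. The exponent bookkeeping matches the paper exactly, and your added remarks on positive-definiteness of $H_{n-1}$ and on why $p\ge n$ ensures integrability are accurate details the paper leaves implicit.
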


\begin{proof} Lemma \ref{FK_trick} and the change of variable $u_n = S_{n-1} v_n$ bring the left hand side to the form
$$
\int  \frac{f(\La_n,u_2, \dots, u_{n-1}) g (S_{n-1} v_n)}{(1+|\la_n|^2)^{p-n+1} \det ( H_{n-1} )^{p-1} (1+ v_n^* v_n)^p}
\dd T_{n-1} \dd v_n \dd \la_n.
$$
Recall that $u_n$, and therefore $v_n$, are column vectors of size $n-1$. The claim follows by definition of the random vector $\bV_p^{(n-1)}$.
\end{proof}

A first relevant fact that can be deduced from the above Lemma is the distribution of every top-left submatrix of the Schur form $T$.

\begin{proposition}\label{sph_subschur}
Conditionally on $\La$ and for $2 \leq n \leq N$, the submatrix $T_n$ of the Schur transform is distributed with density proportional to
\beq
\frac{1}{\det(I_n + T_n T_n^*)^{N+n}}.
\eeq
with respect to the Lebesgue measure on upper-triangular matrix elements ($\dd u_2 \cdots \dd u_n$).
\end{proposition}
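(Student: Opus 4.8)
The plan is to read off the asserted law as a marginal of the full Schur-form density (\ref{sph_Schur_density}): conditionally on $\La$, I will integrate out the columns $u_{n+1}, \dots, u_N$ of $T$ one column at a time, each integration being an instance of Lemma \ref{sph_key_prop} with $g \equiv 1$.

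Conditionally on $\La$, the Vandermonde factor $\prod_{i<j}|\la_i-\la_j|^2$ and every power of $(1+|\la_k|^2)$ are constants, so the conditional density of $T=T_N$ is proportional to $\det(H_N)^{-2N}$. Applying Lemma \ref{sph_key_prop} with $f\equiv g\equiv 1$ (so that $\E(g(S_{N-1}\bV_{2N}^{(N-1)}))=1$), at level $N$ and with exponent $p=2N$, integrates out $u_N$ and yields a conditional density for $T_{N-1}$ proportional to $C_{N-1,2N}\,(1+|\la_N|^2)^{-(N+1)}\det(H_{N-1})^{-(2N-1)}$; since the prefactor is a constant under the conditioning, this is $\det(H_{N-1})^{-(2N-1)}$ up to normalization.

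Iterating, I expect that after eliminating $u_N, u_{N-1}, \dots, u_{m+1}$ the conditional density of $T_m$ is proportional to $\det(H_m)^{-(N+m)}$, since each step lowers both the working level and the exponent by one, starting from level $N$ and exponent $2N$. The hypotheses of Lemma \ref{sph_key_prop} hold at every step: the exponent always exceeds the size of the vector being integrated by $N+1$, so all the integrals converge, and each new factor other than the relevant power of $\det(H_{m-1})$ depends on $\La$ alone and is absorbed into the normalization. Stopping the recursion at $m=n$ gives the density proportional to $\det(I_n+T_nT_n^*)^{-(N+n)}$ claimed in the proposition; the case $n=N$ is (\ref{sph_Schur_density}) itself.

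The argument is essentially bookkeeping of exponents, and I do not anticipate any genuine analytic obstacle; the one point deserving care is simply to check that the constants produced at each reduction depend only on $\La$, and hence do not affect the proportionality statement.
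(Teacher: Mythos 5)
Your proof is correct and is essentially the paper's own argument: a backward recurrence starting from the known case $n=N$ (where the exponent is $2N=N+N$), using Lemma \ref{sph_key_prop} with $g=1$ to integrate out one column at a time, with the invariant that at level $m$ the exponent equals $N+m$ and all emerging prefactors depend only on $\La$. The only cosmetic difference is that the paper carries a generic test function $f$ through each step (which is what formally identifies the marginal density), whereas you specialize to $f\equiv 1$; the exponent bookkeeping and the use of the lemma are otherwise identical.
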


\begin{proof}
The claim is known for $n=N$. We deduce it for all $n$ by a backward recurrence; indeed, as long as $n-1 \geq 2$, the claim for $n-1$ follows from the claim for $n$ by Lemma \ref{sph_key_prop} with $g=1$ and generic $f$.
\end{proof}

We can also derive the joint eigenvalue density of the spherical ensemble from the density of its Schur form, as was done in \cite{ForresterKrishnapur}.

\begin{theorem}\label{sph_joint_density}
The joint density of eigenvalues for the spherical ensemble is proportional to
\beq\label{sph_joint_dens}
\prod_{i<j} |\la_i - \la_j|^2
\frac{1}{\prod_{i=1}^N (1 + |\la_i|^2)^{N+1}} 
\eeq
with respect to the Lebesgue measure on $\C^N$.
\end{theorem}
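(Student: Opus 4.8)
The strategy is to integrate out the strictly upper-triangular entries of the Schur form $T$ from the density \eqref{sph_Schur_density}, reducing the matrix density $\prod_{i<j}|\la_i-\la_j|^2 \det(I_N+TT^*)^{-2N}$ to a density on the diagonal $\La$ alone. The key observation is that Lemma \ref{sph_key_prop} is tailor-made for exactly this: it performs the integration over the last column block $u_n$ at the cost of lowering the matrix size by one and adjusting the exponent of the determinant. So I would apply Lemma \ref{sph_key_prop} iteratively, peeling off $u_N, u_{N-1}, \dots, u_2$ one at a time.

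\textbf{Main steps.} First, rewrite \eqref{sph_Schur_density} as $\prod_{i<j}|\la_i-\la_j|^2 \cdot \det(H_N)^{-2N}$ with $H_N = I_N + TT^*$, and note that by Proposition \ref{sph_subschur} the exponent $2N$ can be tracked as $N+n$ at stage $n$ (indeed $N+N=2N$). Then apply Lemma \ref{sph_key_prop} with $g\equiv 1$ and $p=N+n$ to integrate out $u_n$: the effect is to replace the factor $\det(H_n)^{-(N+n)}$ by $C_{n-1,N+n}\,(1+|\la_n|^2)^{-(N+n)+n-1}\det(H_{n-1})^{-(N+n)+1} = C_{n-1,N+n}\,(1+|\la_n|^2)^{-(N+1)}\det(H_{n-1})^{-(N+n-1)}$. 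Crucially, each step both produces a clean factor $(1+|\la_n|^2)^{-(N+1)}$ — which is exactly what appears in \eqref{sph_joint_dens} — and leaves a determinant power of the right form $N+(n-1)$ to feed into the next iteration. Iterating from $n=N$ down to $n=2$ yields $\prod_{i=2}^{N}(1+|\la_i|^2)^{-(N+1)} \det(H_1)^{-(N+1)}$ times a product of the constants $C_{n-1,N+n}$; since $H_1 = 1+|\la_1|^2$, the last determinant factor is $(1+|\la_1|^2)^{-(N+1)}$, completing the product $\prod_{i=1}^N(1+|\la_i|^2)^{-(N+1)}$. The remaining $\prod_{i<j}|\la_i-\la_j|^2$ is untouched throughout since it depends only on $\La$, and the accumulated constants $\prod_{n=2}^N C_{n-1,N+n}$ are absorbed into the normalization.

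\textbf{Anticipated obstacle.} The computation is essentially bookkeeping, so the only real point requiring care is verifying that the exponent arithmetic closes up correctly at each stage — i.e. that after integrating $u_n$ the determinant power is precisely $N+(n-1)$, so that Lemma \ref{sph_key_prop} (which requires $p\ge n-1$, here $p = N+n-1 \ge n-1$) applies at the next step, and that the power of $(1+|\la_n|^2)$ collapses to exactly $N+1$ independent of $n$. One should also check the base case $n=2$ separately or note that $\mathscr F_1$ is trivial so no further integration is needed once $T_1 = (\la_1)$ is reached, and confirm that the constants $C_{n-1,N+n}$ from \eqref{def_Vpn} are all finite (which holds since $N+n > n-1$). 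Once these indices are checked, normalizing the resulting density gives \eqref{sph_joint_dens}.
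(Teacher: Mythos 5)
Your proposal is correct and follows essentially the same route as the paper: both iterate Lemma \ref{sph_key_prop} with $g\equiv 1$, starting from the Schur density with exponent $p=2N$ and decreasing $p$ by one at each step, so that each integration of $u_n$ releases exactly the factor $(1+|\la_n|^2)^{-(N+1)}$. Your exponent bookkeeping ($p=N+n$ at stage $n$, hence $p-n+1=N+1$) matches the paper's, and the only cosmetic difference is that the paper phrases the iteration in terms of expectations of a test function $h(\La_N)$ rather than directly on the density.
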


\begin{proof} Let $h$ be a bounded and continuous function of the spectrum $\La_n$. We use Lemma \ref{FK_trick} with $p=2N$, $g=1$ and
$$
f_0(\La_n ,u_2, \dots, u_n) := \prod_{i<j} |\la_i - \la_j|^2 h(\La_n),
$$
which yields
$$
\E \left( h(\La_n) \right)
=
C_{N-1,2N}
\int \frac{f_n(\La_n,u_2, \dots, u_{n-1})}{(1+|\la_n|^2)^{N+1} \det ( H_{n-1} )^{2N-1}}
\dd T_{n-1} \dd \lambda_{n}
$$
we then use Lemma \ref{FK_trick} again with
$$
f_{n-1}(\La_{n-1},u_2, \dots, u_{n-1}) := \int \frac{f_n(\La_n,u_2, \dots, u_{n-1})}{(1+|\la_n|^2)^{p-n+1}} \dd \lambda_{n},
$$
% \cor say more here? \nc
and so on; this recurrence leads to the expression
$$
\E \left( h(\La_N) \right)
=
C
\int \prod_{i<j} |\la_i - \la_j|^2  \frac{ h(\La_N)}{\prod_{i=1}^N (1 + |\la_i|^2)^{N+1}} \dd \La_N,
$$
which is equivalent to the claim.
\end{proof}

Theorem \ref{sph_joint_density} can be rephrased by saying that the eigenvalues of $\Sph(N)$ are distributed according to (\ref{density_general_V}) with potential $V(t)= (N+1) \ln (1+t)$. A straightforward computation shows that
\beq\label{spherical_gamma}
\ga_V(\al) \disteq \frac{1}{\beta_{N+1-\al, \al}} - 1.
\eeq

\subsubsection*{Origin of the name spherical.} The stereographic projection from $\mathbb{S}^2$ to $\C$ is defined by
\beq\label{stereo_proj}
\rho(\bw) = \tan\left(\frac{\phi}{2} \right) e^{i \theta},
\qquad \text{where} \quad
{ \bw } =
\left(
\begin{array}{c}
\sin \phi \cos \theta \\
\sin \phi \sin \theta \\
\cos \phi
\end{array}
\right)
\in \mathbb{S}^2,
\eeq
and its inverse map from $\C$ to $\mathbb{S}^2$ is given by :
\beq\label{stereo_proj_inv}
p(\lambda)=\frac{1}{1+|\lambda|^2}
\left(
\begin{array}{c}
2{\rm Re} \lambda \\
2{\rm Im} \lambda \\
|\lambda|^2 -1
\end{array}
\right)
\in \mathbb{S}^2,
 \qquad \text{where } \la \in \C.
\eeq

The reason for the name \textit{spherical} is that the following identity in distribution holds.
\begin{proposition}\label{spherical_name} Let $(\bw_1, \dots, \bw_N) = (p(\la_1), \dots, p(\la_N))$ be the images of the eigenvalues of $G_1 G_2^{-1}$ by the map (\ref{stereo_proj_inv}).
This point process on $\mathbb{S}^2$ has joint density proportional to
\beq\label{point_process_1}
\prod_{i<j} \| \bw_i - \bw_j \|_{\R^3}^2.
\eeq
\end{proposition}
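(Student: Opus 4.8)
The plan is to start from the eigenvalue density of $\Sph(N)$ established in Theorem~\ref{sph_joint_density}, namely that the eigenvalues have joint density proportional to $\prod_{i<j}|\la_i-\la_j|^2 \prod_{i=1}^N(1+|\la_i|^2)^{-(N+1)}$ on $\C^N$, and push it forward through the inverse stereographic projection $p$ of (\ref{stereo_proj_inv}). So the first step is a change-of-variables computation: we need the Jacobian of the map $p\colon\C\cong\R^2\to\mathbb{S}^2$, where $\mathbb{S}^2$ carries its standard surface measure. A direct computation (writing $\la = x+\ii y$ and differentiating the three components of $p(\la)$, then taking the norm of the cross product of the two partial-derivative vectors) gives that $p$ pulls back the surface area element to $\frac{4}{(1+|\la|^2)^2}\,\dd x\,\dd y$; equivalently, the density of $\bw_k = p(\la_k)$ with respect to surface measure picks up a factor $\frac{(1+|\la_k|^2)^2}{4}$ per eigenvalue.

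The second step is the key algebraic identity relating Euclidean chordal distance on the sphere to the planar expression: for $\la,\mu\in\C$ one has
\[
\|p(\la)-p(\mu)\|_{\R^3}^2 = \frac{4|\la-\mu|^2}{(1+|\la|^2)(1+|\mu|^2)}.
\]
This is a classical fact about stereographic projection and follows from expanding the squared norm using $\|p(\la)\|=\|p(\mu)\|=1$ and computing the inner product $\langle p(\la),p(\mu)\rangle$ explicitly. Taking the product over $i<j$ converts $\prod_{i<j}\|\bw_i-\bw_j\|^2$ into $\prod_{i<j}|\la_i-\la_j|^2$ times $\prod_{i<j}\frac{4}{(1+|\la_i|^2)(1+|\la_j|^2)}$, and the latter factor is $4^{\binom N2}\prod_{i=1}^N(1+|\la_i|^2)^{-(N-1)}$ since each index $i$ appears in exactly $N-1$ of the pairs.

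The third step is just bookkeeping: combining the Jacobian factors $\prod_i\frac{(1+|\la_i|^2)^2}{4}$ from Step~1 with the eigenvalue density from Theorem~\ref{sph_joint_density} and the rewriting from Step~2, the powers of $(1+|\la_i|^2)$ cancel exactly — we get $2 + 2\cdot\binom N2/N$… more precisely $(N+1)$ from the denominator, $N-1$ from the cross-distance product, and $-2$ from the Jacobian, giving net exponent $(N+1)+(N-1)-2\cdot\frac N N$; one checks these sum to zero per index after distributing — so the density of $(\bw_1,\dots,\bw_N)$ with respect to surface measure is proportional to $\prod_{i<j}\|\bw_i-\bw_j\|_{\R^3}^2$, which is the claim. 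The only mild subtlety is that $p$ misses the north pole (the image of $\la=\infty$), but this is a single point of zero surface measure, so it does not affect the density statement.

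I do not expect any real obstacle here; the two computations in Steps~1 and~2 are standard and short, and everything else is algebra. If anything, the one thing to be careful about is tracking the exponents and the powers of $4$ consistently, but since the final statement is only up to proportionality, the constants are irrelevant and only the cancellation of the $(1+|\la_i|^2)$ powers needs to be verified — which it does, by design of the stereographic projection.
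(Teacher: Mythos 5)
Your proposal is correct and follows exactly the paper's argument: a change of variables on the density of Theorem \ref{sph_joint_density} using the Jacobian $\frac{4}{(1+|\la|^2)^2}$ of $p$ together with the chordal-distance identity $\|p(\la)-p(\mu)\|_{\R^3}^2 = \frac{4|\la-\mu|^2}{(1+|\la|^2)(1+|\mu|^2)}$, which is precisely (\ref{stereo_identity}). The exponent bookkeeping in your last step is written a bit confusingly, but the intended cancellation $-(N+1)+2+(N-1)=0$ per index is right, so the conclusion stands.
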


In other terms, the eigenvalues of $\Sph(N)$ can be described as the stereographic projection of a one-component plasma on $\mathbb{S}^2$, with respect to a uniform potential\footnote{Note that the appropriate convention for the stereographic projection here is such that the unit circle is mapped to the equator of $\mathbb{S}^2$. In particular, the average proportion of eigenvalues of $\Sph(N)$ falling in the unit disk is $\frac12$.}.

\begin{proof}
This is obtained by a change of variable applied to the density (\ref{sph_joint_dens}), noting that
\beq\label{stereo_identity}
\|p(\la) - p (\mu)\|_{\R^3}^2
=
\frac{4 |\la - \mu|^2}{ (1+|\la |^2)(1+|\mu |^2)}
\eeq
and that the Jacobian of $p$ at $\la$ is $\frac{4}{(1+|\la|^2)^2}$.
% two key-facts that will be used again later on.
\end{proof}

\subsection{Distribution and conditional expectation of overlaps}

We now give the proof of the claims concerning diagonal and off-diagonal overlaps in the spherical ensemble. Some results hold conditionally on the whole spectrum $\La_N$, whereas others only imply a condition on one eigenvalue.

\begin{theorem}\label{spherical_ov11} Conditionally on $\{ \La=(\la_1, \dots, \la_N) \} $, the diagonal overlaps of $\Sph(N)$ are distributed as
\beq
\Ov_{1,1} \disteq \prod_{k=2}^N \left( 1+  \frac{(1 +
|\la_1|^2)(1 + |\la_k|^2)}{|\la_1 - \la_k|^2} X_{N}^{(k)} \right)
\eeq
where the $ X_{N}^{(k)}$ are i.i.d. distributed according to (\ref{def_Xm}) with $m=N$. In particular, the quenched expectation is given by
\beq
\E_{\La} \left( \Ov_{1,1} \right) = \prod_{k=2}^N \left( 1+  \frac{(1 +
|\la_1|^2)(1 + |\la_k|^2)}{N |\la_1 - \la_k|^2}\right).
\eeq
\end{theorem}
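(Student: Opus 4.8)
The plan is to exploit the nested structure of the Schur form together with the recurrence (\ref{vect_recurrence}) for the left-eigenvector coefficients $b_i$, and to integrate out the columns $u_n$ one at a time using Lemma \ref{sph_key_prop}. Recall that $\Ov_{1,1} = \sum_{i=1}^N |b_i|^2$, where $b_1 = 1$ and $b_{n+1} = \frac{1}{\la_1 - \la_{n+1}} B_n u_{n+1}$ with $B_n = (1,b_2,\dots,b_n)$. Conditionally on $\La$, the Schur form $T$ has the density (\ref{sph_Schur_density}), and by Proposition \ref{sph_subschur} the submatrix $T_n$ has density proportional to $\det(I_n + T_n T_n^*)^{-(N+n)}$. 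The key observation is that $B_n$ is a function of $u_2,\dots,u_n$ only, hence $\mathscr{F}_n$-measurable, so that the new term added to the overlap at step $n \to n+1$, namely $|b_{n+1}|^2 = \frac{1}{|\la_1 - \la_{n+1}|^2} |B_n u_{n+1}|^2$, depends on $u_{n+1}$ only through the scalar $B_n u_{n+1}$.

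The first step is to understand the conditional law of $b_{n+1}$ given $\mathscr{F}_n$ and $\La$. Using Lemma \ref{sph_key_prop} (with $p = N+n+1$ so that the density of $T_{n+1}$ matches Proposition \ref{sph_subschur}, and integrating against a test function $g$ of $u_{n+1}$), the column $u_{n+1}$ can be replaced by $S_n \bV_{N+n+1}^{(n)}$ where $S_n = (1+|\la_{n+1}|^2)^{1/2} H_n^{1/2}$ and $\bV$ has the rotationally-related density (\ref{def_Vpn}). Therefore, conditionally on $\mathscr{F}_n$, we have $B_n u_{n+1} \disteq (1+|\la_{n+1}|^2)^{1/2}\, B_n H_n^{1/2} \bV_{N+n+1}^{(n)}$. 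By rotational invariance of the density (\ref{def_Vpn}), for any fixed vector $w \in \C^n$ one has $w^* \bV_p^{(n)} \disteq \|w\|\, \cdot (\text{first coordinate of } \bV_p^{(n)})$, so $|B_n u_{n+1}|^2 \disteq (1+|\la_{n+1}|^2)\, \|H_n^{1/2} B_n^*\|^2 \, |v_1|^2$ with $|v_1|^2 \disteq X_{p-n-1} = X_N$ by Lemma \ref{spherical_distributions}. The crucial algebraic point is then to identify $\|H_n^{1/2} B_n^*\|^2 = B_n H_n B_n^* = B_n (I_n + T_n T_n^*) B_n^*$; and since $L_1^{(n)} := B_n$ is (up to its already-determined first coordinates) the left eigenvector data, one checks $B_n T_n = \la_1 B_n$ on the relevant block, giving $B_n H_n B_n^* = \|B_n\|^2 + |\la_1|^2 \|B_n\|^2 = (1+|\la_1|^2)\sum_{i=1}^n |b_i|^2 = (1+|\la_1|^2)\Ov_{1,1}^{(n)}$, where $\Ov_{1,1}^{(n)} := \sum_{i=1}^n |b_i|^2$ is the partial overlap.

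Combining these, conditionally on $\mathscr{F}_n$ and $\La$,
\[
|b_{n+1}|^2 \disteq \frac{(1+|\la_1|^2)(1+|\la_{n+1}|^2)}{|\la_1 - \la_{n+1}|^2}\, \Ov_{1,1}^{(n)}\, X_N^{(n+1)},
\]
with $X_N^{(n+1)}$ independent of $\mathscr{F}_n$. Hence $\Ov_{1,1}^{(n+1)} = \Ov_{1,1}^{(n)}\bigl(1 + \frac{(1+|\la_1|^2)(1+|\la_{n+1}|^2)}{|\la_1 - \la_{n+1}|^2} X_N^{(n+1)}\bigr)$, and iterating from $\Ov_{1,1}^{(1)} = |b_1|^2 = 1$ up to $n+1 = N$ yields the claimed product of independent factors. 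The quenched expectation follows immediately by taking expectations factor by factor and using $\E X_N = 1/N$ from (\ref{def_Xm}). The main obstacle is the careful bookkeeping in the conditioning argument: one must verify that applying Lemma \ref{sph_key_prop} with $g$ a function of $u_{n+1}$ alone genuinely produces the stated conditional law given all of $\mathscr{F}_n$ (not merely marginally), which requires tracking that the $f$ in that lemma absorbs the full dependence on $u_2,\dots,u_n$ including the factor $B_n$; and the algebraic identity $B_n(I_n+T_nT_n^*)B_n^* = (1+|\la_1|^2)\|B_n\|^2$ must be justified from the eigenvector relation $B_n T_n = \la_1 B_n$, which is exactly where the triangular structure and the recurrence (\ref{vect_recurrence}) are used.
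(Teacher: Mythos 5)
Your proposal is correct and follows essentially the same route as the paper's proof: the same recursion on the partial overlaps $\Ov_{1,1}^{(n)}$, the same use of Proposition \ref{sph_subschur} and Lemma \ref{sph_key_prop} with $p=N+n+1$ to replace $u_{n+1}$ by $S_n \bV_{N+n+1}^{(n)}$, the same reduction via Lemma \ref{spherical_distributions} to $\|S_n B_n^*\|^2 X_N$, and the same algebraic identity $B_n(I_n+T_nT_n^*)B_n^* = (1+|\la_1|^2)\|B_n\|^2$ from $B_nT_n=\la_1B_n$. The bookkeeping point you flag (that the generic $f$ in Lemma \ref{sph_key_prop} carries the full $\mathscr{F}_n$-dependence, so the factor $X_N^{(n+1)}$ is independent of $\mathscr{F}_n$ and not just marginally distributed as claimed) is exactly how the paper handles the conditioning.
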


\begin{proof} For $1 \leq d \leq N$, we define the partial sums
$$
\Ov_{1,1}^{(d)} := 
\sum_{i=1}^d |b_i|^2.
$$
It follows from the general facts presented in Section \ref{overlaps_subsec} that $\Ov_{1,1}^{(1)} = 1$, and for any $d \geq 1$,
$$
\Ov_{1,1}^{(d+1)} = \Ov_{1,1}^{(d)} + |b_{d+1}|^2 = \Ov_{1,1}^{(d)} \left(1 + \frac{1}{|\la_1 - \la_{d+1}|^2} \frac{ | B_d u_{d+1}  |^2 }{\| B_d \|^2} \right) 
$$
In order to characterize the distribution of this factor, we use our preliminary results in the following order:  
\begin{itemize}
\item Proposition \ref{sph_subschur} gives the distribution of $T_{d+1}$, so that $p=N+d+1$ in the following steps.
\item Lemma \ref{sph_key_prop} with $n=d+1$, generic $f$ and $g(u_{d+1}) := h(| B_d u_{d+1}  |^2)$ with generic $h$ gives that
$$
| B_d u_{d+1}  |^2 \disteq | B_d S_d \bV_{N+d+1}^{(d)} |^2
$$
and is independent of $\mathcal{F}_d$.
\item Lemma \ref{spherical_distributions} with $a=b= B_d^*$ and $S=S_{d}$ yields
\beq
| B_d u_{d+1} |^2 \disteq 
 \| S_d 
B_d^* \|^2 X_{N}
= (1+|\la_{d+1}|^2) ( B_d (I_d+ T_d T_{d}^*) 
B_d^* ) X_{N}
\eeq
where $X_{N}$ is distributed according to (\ref{def_Xm}) with parameter $m=N$, and independent of $\mathscr{F}_d$. 
\end{itemize}
We notice that, as $T$ is triangular and $T_d, B_d$ are obtained from $T$ and $L_1$,
\beq
B_d T_{d}
= \la_1 B_d
\eeq
which implies that $B_d (I_{d} + T_d T_{d}^*) B_d^*  = (1 + |\la_1|^2) \| B_{d} \|^2$. It follows that
$$
\Ov_{1,1}^{(d+1)} 
\disteq
\Ov_{1,1}^{(d)} \left(1 + \frac{(1 + |\la_1|^2)(1 + |\la_{d+1}|^2)}{|\la_1 - \la_{d+1}|^2} X_{N} \right),
$$
where $X_{N} $ is independent of $ \mathscr{F}_d$; we denote this variable by $X_N^{(d+1)}$ in order to avoid confusion between the different variables $X_N$. This implies the claim, as $\Ov_{1,1} = \Ov_{1,1}^{(N)}$.
\end{proof}

Diagonal overlap are (deterministically) larger than one, and typically of order $N$. The following proposition states that in the spherical ensemble the expectation of the diagonal overlap for an eigenvalue conditioned to be at the origin is exactly $N$, as is also the case in the complex Ginibre and truncated unitary ensembles.

\begin{proposition}\label{sph_expectation_N}
Conditionally on $\{ \la_1=0 \}$, the expectation of the diagonal overlap $\Ov_{1,1}$ in the spherical ensemble $\Sph(N)$ is
$$
\E_{\{ \la_1=0 \}} \Ov_{1,1} = N.
$$
\end{proposition}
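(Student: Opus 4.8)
The plan is to start from the quenched expectation computed in Theorem~\ref{spherical_ov11}, specialize it to $\la_1=0$, average over the remaining eigenvalues using the conditioned Kostlan property (Proposition~\ref{Kostlan_conditioned}), and observe that the resulting product telescopes.

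Concretely, Theorem~\ref{spherical_ov11} gives, conditionally on $\La$,
\[
\E_{\La}\left(\Ov_{1,1}\right) = \prod_{k=2}^N\left(1 + \frac{(1+|\la_1|^2)(1+|\la_k|^2)}{N\,|\la_1-\la_k|^2}\right),
\]
so that on the event $\{\la_1=0\}$ this becomes $\prod_{k=2}^N\bigl(1 + \tfrac{1+|\la_k|^2}{N\,|\la_k|^2}\bigr)$. By Theorem~\ref{sph_joint_density} the eigenvalues follow the density (\ref{density_general_V}) with $V(t)=(N+1)\ln(1+t)$; hence Proposition~\ref{Kostlan_conditioned} applies and, conditionally on $\{\la_1=0\}$, the family $\{|\la_2|^2,\dots,|\la_N|^2\}$ is equal in law to independent variables $\ga_V(2),\dots,\ga_V(N)$. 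Taking expectations and using independence,
\[
\E_{\{ \la_1=0 \}}\Ov_{1,1} = \prod_{k=2}^N\left(1 + \frac{1}{N}\,\E\left(\frac{1+\ga_V(k)}{\ga_V(k)}\right)\right) = \prod_{k=2}^N\left(1 + \frac{1}{N}\left(1 + \E\left(\frac{1}{\ga_V(k)}\right)\right)\right).
\]

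It then remains to compute the negative moment $\E\bigl(\ga_V(k)^{-1}\bigr)$. From the density (\ref{gammaVdist}) one has $\Gamma_V(\al)=\int_{\R_+} t^{\al-1}(1+t)^{-(N+1)}\,\dd t = B(\al,N+1-\al)$, so that $\E\bigl(\ga_V(\al)^{-1}\bigr)=B(\al-1,N+2-\al)/B(\al,N+1-\al)=\tfrac{N+1-\al}{\al-1}$ for $\al>1$; alternatively one may use the Beta representation (\ref{spherical_gamma}), which gives $\ga_V(k)^{-1}\disteq \beta_{N+1-k,k}/(1-\beta_{N+1-k,k})$, and evaluate a one-line Beta integral. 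With $\al=k\ge 2$ this yields $\E(\ga_V(k)^{-1})=\tfrac{N+1-k}{k-1}$, so each factor above equals $1+\tfrac1N\cdot\tfrac{N}{k-1}=\tfrac{k}{k-1}$, and the product telescopes:
\[
\E_{\{ \la_1=0 \}}\Ov_{1,1}=\prod_{k=2}^N\frac{k}{k-1}=N.
\]

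The argument is essentially a routine computation; the one point that requires care is the finiteness of the negative moment $\E(\ga_V(k)^{-1})$, which holds precisely because $k\ge 2$ (equivalently $\al>1$ in (\ref{gammaVdist})). This is the same feature that makes $\E\,\Ov_{1,1}$ finite even though the individual factors in the product can be large, so I do not expect any real obstacle beyond keeping track of this constraint.
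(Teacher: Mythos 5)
Your proposal is correct and follows essentially the same route as the paper: specialize the quenched expectation from Theorem~\ref{spherical_ov11} to $\la_1=0$, apply the conditioned Kostlan property to replace the squared radii by independent $\ga_V(k)$ variables, compute $\E(\ga_V(k)^{-1})=\tfrac{N+1-k}{k-1}$ via the Beta-function ratio, and telescope. Your remark about the finiteness of the negative moment requiring $k\ge 2$ is a nice explicit observation that the paper leaves implicit.
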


\begin{proof}
We know from Proposition \ref{Kostlan} that the squared radii are distributed like independent variables with distributions $\gamma_{V,k}$ with $V (x)= (N+1) \log(1+x)$ and $2 \leq k \leq N$.
We have
$$
\E_{\{ \la_1 = 0 \}} \Ov_{1,1} 
= \prod_{k=2}^N \E \left( 1 + \frac{1}{N} + \frac{1}{N \gamma_{V,k}} \right),
$$
and according to Lemma \ref{spherical_gammagen},
$$
\E \left( \frac{1}{ \gamma_{V,k}} \right)
= \frac{\beta (N+2 -k, k-1) }{ \beta(N+1-k, k) }
= \frac{N+1-k}{k-1},
$$
so that the expectation is given by the telescopic product
$$
\E_{\{ \la_1 = 0 \}} \Ov_{1,1}
= \prod_{k=2}^N \frac{k}{k-1}
=N
$$
as was claimed.
\end{proof}

\begin{proposition}\label{spherical_gamma2lim}
Conditionally on $\{ \la_1=0 \}$, the following convergence in distribution takes place:
$$
\frac{1}{N} \Ov_{1,1} \distconv \frac{1}{\gamma_2}.
$$
\end{proposition}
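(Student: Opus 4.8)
The plan is to start from the product decomposition of Theorem \ref{spherical_ov11} specialized to $\{\la_1 = 0\}$, which gives
$$
\frac{1}{N}\Ov_{1,1} \disteq \frac{1}{N}\prod_{k=2}^N \left(1 + \frac{1 + |\la_k|^2}{N}\, X_N^{(k)}\right),
$$
with the $X_N^{(k)}$ i.i.d.\ of density $(N+1)/(1+x)^{N+2}$ on $\R_+$, and with the $|\la_k|^2$ independent of the $X_N^{(k)}$ and distributed, by Proposition \ref{Kostlan_conditioned}, as independent $\ga_V(k)$ for $2 \le k \le N$ with $V(t) = (N+1)\ln(1+t)$. Taking logarithms, I would write $\frac{1}{N}\Ov_{1,1} = \frac{1}{N}\exp\big(\sum_{k=2}^N L_k\big)$ where $L_k = \log(1 + \tfrac{1}{N}(1+|\la_k|^2) X_N^{(k)})$, and the natural guess is that the sum behaves, after removing a logarithmic-in-$N$ drift that exactly cancels the prefactor $1/N$, like $\log \ga_2^{-1}$ — i.e.\ the whole product concentrates except for the contribution of the few smallest $|\la_k|^2$ (equivalently the smallest-index $\ga_V(k)$, near the origin), which is the mechanism already seen in the complex Ginibre case in \cite{BourgadeDubach}.

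Concretely, the steps I would carry out are: (1) compute the asymptotics of $\E X_N^{(k)} = 1/N$ and of $\E\log(1 + c X_N)$ for $c$ of order $1/N$, and use that for $k$ not too small $|\la_k|^2$ is typically of order $k/N$ (from $\ga_V(k)$ with this $V$, via \eqref{spherical_gamma}: $\ga_V(\al) \disteq \beta_{N+1-\al,\al}^{-1} - 1$, so $\E \ga_V(k) = (k-1)/(N+1-k)$), so that the bulk terms $k \gtrsim \log N$ contribute a deterministic-looking sum whose exponential is asymptotically $\asymp N / \ga_2^{-1}$-type — more precisely I expect the product over $k \ge m_0$ to contribute, after the $1/N$ scaling, a factor converging to a constant times a function of the first few squared radii; (2) handle the ``small $k$'' terms ($k = 2, \dots, m_0$ for fixed threshold $m_0$, then let $m_0 \to \infty$) exactly: for these, $|\la_k|^2 \to 0$ and $N|\la_k|^2 = N\ga_V(k)$ converges in distribution to $\ga_{k-1}$ (a Gamma with shape $k-1$), since $\beta_{N+1-k,k}^{-1} - 1 \approx \beta_{N,k}^{-1}-1$ and $N\beta_{N,k} \to \ga_k$... more carefully $N(\beta_{N+1-k,k}^{-1}-1) \to \ga_{k-1}$, so $1 + \tfrac1N(1+|\la_k|^2)X_N^{(k)} = 1 + \tfrac1N X_N^{(k)} + o(1/N)$ and the factor $\to 1$, contributing nothing; the real heavy-tail comes instead from the joint effect of all bulk terms. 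I would therefore recompute the drift: $\sum_{k=2}^N \E L_k \approx \sum_{k=2}^N \tfrac1N\E[(1+|\la_k|^2)X_N^{(k)}] = \sum_{k=2}^N \tfrac1N \cdot \tfrac1N(1 + \tfrac{k-1}{N+1-k})$, which sums to $\approx \sum \tfrac1{N(N+1-k)} = \tfrac1N H_{N-1} \approx \tfrac{\log N}{N} \to 0$; this suggests the drift is $o(1)$, so the $1/N$ prefactor must come from somewhere else — namely from the rare event that one small-index factor is atypically large (size $\asymp N$), which is precisely the heavy-tail / extreme-value mechanism.

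Given this, the cleanest route is probably to mimic \cite{BourgadeDubach} directly: show $\frac1N \Ov_{1,1} \disteq \frac1N \prod_{k=2}^N(1 + \tfrac1N(1+|\la_k|^2)X_N^{(k)})$, identify the dominant contribution as coming from a single index where $(1+|\la_k|^2)X_N^{(k)}$ is of order $N$ — since $X_N^{(k)}$ has a Pareto-type tail $\P(X_N > t) = (1+t)^{-(N+1)} $... no, that tail is too light for fixed $N$; rather, the product of $N$ factors each $1 + O(1/N)$ with fluctuations produces, by a CLT-type analysis of $\sum (L_k - \E L_k)$ combined with the contribution of the extreme, a limit. I would compute the Laplace transform or the law of $\sum_{k=2}^N L_k + \log N$ using the explicit densities: condition on $\La$, use that $\E_\La[\tfrac1N\Ov_{1,1}] = \tfrac1N\prod(1 + \tfrac{1+|\la_k|^2}{N^2}) \to e^0 \cdot \tfrac1N$... this still gives mean $\to 0$, consistent with the mean of $\ga_2^{-1}$ being $\infty$ and the scaled overlap converging to a heavy-tailed limit whose bulk mass escapes. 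The key identity I would aim to establish is $\P(\tfrac1N\Ov_{1,1} > t) \to \P(\ga_2^{-1} > t) = 1 - e^{-1/t}(1 + 1/t)$ for each $t > 0$, via the recursion $\Ov_{1,1}^{(d+1)} = \Ov_{1,1}^{(d)}(1 + \text{factor})$ and an inductive control showing $\tfrac1N\Ov_{1,1}^{(d)}$ stabilizes. \textbf{The main obstacle} I anticipate is proving the tightness/convergence of the product rather than just moment asymptotics: one must show that no single factor nor any cluster of them blows up the right way, and identify the limiting law as exactly $\ga_2^{-1}$ — this requires either a delicate martingale/CLT argument on $\log$-scale plus an extreme-value input, or porting wholesale the probabilistic argument of \cite{BourgadeDubach} and checking that the spherical-specific densities (the $X_N$ law and $\ga_V$ law above) have the same relevant asymptotics as their Ginibre counterparts (they do, to leading order in $1/N$), which is the content of the ``analogous'' claim in the paper.
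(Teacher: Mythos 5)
Your starting point is already incorrect, and the error derails the rest of the proposal. Specializing Theorem \ref{spherical_ov11} to $\{\la_1=0\}$ gives
$$
\Ov_{1,1} \disteq \prod_{k=2}^N\left(1+\frac{1+|\la_k|^2}{|\la_k|^2}\,X_N^{(k)}\right),
$$
with $|\la_1-\la_k|^2=|\la_k|^2$ in the denominator, not $N$: the $N$ appears only in the formula for the conditional \emph{expectation}, because $\E X_N=1/N$. With the correct formula and Proposition \ref{Kostlan_conditioned}, one has $|\la_k|^2\disteq\gamma_V(k)\disteq\beta_{N+1-k,k}^{-1}-1$, so for fixed $k$ both $N|\la_k|^2\distconv\gamma_k$ (not $\gamma_{k-1}$) and $NX_N^{(k)}\distconv\gamma_1$, hence the $k$-th factor converges to the nondegenerate limit $1+\gamma_1/\gamma_k$ rather than to $1$. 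The heavy tail therefore comes exactly from where your first paragraph guessed, namely the few eigenvalues nearest the origin; your subsequent reasoning (drift $o(1)$, conditional mean tending to $0$, rare-event or extreme-value mechanism in the bulk) is chasing an artifact of the wrong denominator. In particular $\E_{\{\la_1=0\}}\Ov_{1,1}=N$ exactly (Proposition \ref{sph_expectation_N}), not $o(N)$.

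For comparison, the paper's argument writes $N\Ov_{1,1}^{-1}\disteq F(2,k_N)\times F(k_N+1,N)$, splitting the product at a sequence $k_N\to\infty$ produced by a diagonal-extraction lemma (Lemma \ref{sequence_exists}). The head $F(2,k_N)$ converges to $\gamma_2$ by comparing its negative integer moments with those of the corresponding Ginibre product, for which the exact identity $N\prod_{k=2}^N(1+\gamma_1^{(k)}/\gamma_k)^{-1}\disteq N\beta_{2,N-1}\distconv\gamma_2$ is available; the tail $F(k_N+1,N)$ converges to $1$ in $L^2$ using the exact moments $\E F_{N,k}=k/(k-1)$ and $\E F_{N,k}^2=k/(k-2)$ and telescoping, and Slutsky's theorem concludes. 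None of the machinery you invoke (log-scale CLT, tightness of the log-sum, extreme-value input) is needed once the factors are written correctly.
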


The proof relies on the following elementary Lemma.

\begin{lemma}\label{sequence_exists}
Let $\left( u_{k,n}^{(m)} \right)_{1 \leq k \leq n}^{m \in \N}$ be a countable family of double-indexed real positive sequences such that
$$
\forall m,k \geq 1, \qquad u_{k,n}^{(m)} \xrightarrow[n \rightarrow \infty]{} 0.
$$
Then there exists a sequence $(k_n)_{n \geq 1}$ such that $1 \leq k_n \leq n$,
$
k_n \xrightarrow[n \rightarrow \infty]{} \infty,
$
and for any $m \in \N$,
$$
\sum_{i=1}^{k_n} u_{i,n}^{(m)} \xrightarrow[n \rightarrow \infty]{} 0.
$$
\end{lemma}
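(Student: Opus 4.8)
The plan is to prove Lemma~\ref{sequence_exists} by a standard diagonal-extraction argument. The hypothesis is that we have countably many ``triangular'' arrays $\left(u_{k,n}^{(m)}\right)$, one for each $m\in\N$, and that each individual entry tends to $0$ as $n\to\infty$. We want a single slowly-growing cutoff sequence $(k_n)$ that kills all the partial sums $\sum_{i=1}^{k_n}u_{i,n}^{(m)}$ simultaneously. The key observation is that for a \emph{fixed} truncation length $K$, the finite sum $\sum_{i=1}^{K}u_{i,n}^{(m)}\to 0$ as $n\to\infty$ for every $m$, because it is a finite sum of sequences each tending to $0$. So for each fixed $K$ and each fixed $m$ we control the sum eventually in $n$; the only issue is to let $K=k_n$ grow with $n$ while retaining the control over the first $m$ arrays at ``stage'' $n$.

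Concretely, I would proceed as follows. For each integer $p\ge 1$, consider the finitely many arrays $m=1,\dots,p$ and the truncation length $K=p$. Since for each such $m$ the sum $\sum_{i=1}^{p}u_{i,n}^{(m)}\xrightarrow[n\to\infty]{}0$, there is an integer $N_p$ such that for all $n\ge N_p$ and all $m\in\{1,\dots,p\}$ we have $\sum_{i=1}^{p}u_{i,n}^{(m)}\le 1/p$. Without loss of generality the sequence $(N_p)_{p\ge1}$ is strictly increasing and $N_1=1$. Now define $k_n := p$ for $N_p\le n< N_{p+1}$; equivalently, $k_n$ is the largest $p$ with $N_p\le n$. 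Then $1\le k_n\le n$ holds (using $N_p\ge p$, which we may also arrange), $k_n\to\infty$ since each value $p$ is attained for all large $n$, and $k_n$ is nondecreasing.

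It remains to check the conclusion. Fix $m\in\N$. For $n$ large enough that $k_n\ge m$ (which happens eventually since $k_n\to\infty$), we have by construction $n\ge N_{k_n}$ and $m\le k_n$, hence
$$
\sum_{i=1}^{k_n}u_{i,n}^{(m)}\le \frac{1}{k_n}\xrightarrow[n\to\infty]{}0,
$$
using that all terms are nonnegative so the bound $\sum_{i=1}^{k_n}u_{i,n}^{(m)}\le 1/k_n$ (valid since $m\le k_n$ and $n\ge N_{k_n}$) applies. This gives $\sum_{i=1}^{k_n}u_{i,n}^{(m)}\to 0$ for every $m$, as claimed.

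There is no real obstacle here; the lemma is a soft diagonal argument. The only point requiring a little care is the bookkeeping: one must choose the cutoff $k_n$ to grow slowly enough that at stage $n$ the chosen truncation length is already in the ``good'' regime $n\ge N_{k_n}$ for all of the first $k_n$ arrays simultaneously, which forces $k_n$ to be essentially the inverse function of $p\mapsto N_p$. One should also double-check the edge constraint $1\le k_n\le n$, which is why I arrange $N_p\ge p$ from the start (always possible by enlarging the $N_p$). Nonnegativity of the $u_{k,n}^{(m)}$ is used so that truncating the sum at $k_n\ge m$ does not lose the estimate obtained for the fixed length $m$ — actually we use the estimate at length $k_n$ directly, so nonnegativity is only needed to ensure monotonicity of partial sums is not required at all; the argument goes through verbatim.
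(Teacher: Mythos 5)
Your proof is correct. It takes the same basic diagonal-extraction idea as the paper but organizes it differently, and in a way that is arguably cleaner. The paper proceeds in two stages: it first proves the single-array case via an iterative halving construction ($n_{j+1}$ is chosen so that $S_{j+1,n}\leq\frac12 S_{j,n_j}$ for $n\geq n_{j+1}$, and $k_n$ is the inverse of $j\mapsto n_j$), and then reduces the countable family to a single array by setting $v_{k,n}:=\sum_{m=1}^{k}u_{k,n}^{(m)}$, using positivity to dominate $\sum_{j\leq k_n}u_{j,n}^{(m)}$ by $\sum_{j\leq k_n}v_{j,n}$ once $k_n\geq m$. You instead handle the truncation length and the array index simultaneously: at stage $p$ you demand $\sum_{i=1}^{p}u_{i,n}^{(m)}\leq 1/p$ for all $m\leq p$ and $n\geq N_p$, then take $k_n$ to be the inverse of $p\mapsto N_p$. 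Both arguments are sound; yours avoids the two-step reduction and applies the length-$k_n$ estimate directly, which (as you half-observe) means positivity of the $u_{k,n}^{(m)}$ is not actually needed for your version, whereas the paper's domination step does use it. Your closing remark on this point is garbled --- you assert nonnegativity is used and then that it is not --- but this is a cosmetic blemish, not a gap; the bookkeeping ($N_p\geq p$ strictly increasing, hence $k_n\leq N_{k_n}\leq n$, and $k_n\to\infty$) is all in order.
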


\begin{proof}[Proof of Lemma \ref{sequence_exists}]
We first prove the statement for one double-indexed sequence $(u_{k,n})_{1 \leq k \leq n}$. We define, for $1 \leq k \leq n$, the partial sums
$
S_{k,n} = \sum_{i=1}^k u_{i,n},
$
and the following sequence, iteratively:
$$
n_1 := 1, \qquad
n_{j+1} := \min \left\{ l \ \mid \ \forall n \geq l, \ 
S_{j+1,n} \leq \frac12 S_{j,n_j} \right\}.
$$
By assumption on $u_{k,n}$, 
%(positivity, and convergence to $0$ for fixed $k$)
the sequence $(n_j)_{j \geq 1}$ is well defined, increasing, and goes to infinity. Moreover, by construction we see that $S_{j,n_j} $ converges to zero.
%$$
%S_{j,n_j} \xrightarrow[j \rightarrow \infty]{} 0.
%$$
It is straightforward to check that the sequence
$$
k_n := \max \{  j \in [\![ 1,n ]\!]  \ \mid \ n_j \leq n \}
$$
is such that $1 \leq k_n \leq n$, $k_n \xrightarrow[n \rightarrow \infty]{} \infty$, and
% $k_1 =1$, $k_{n_j} = j$
$$
\forall n \in [\![ n_j, n_{j+1} -1 ]\!], \qquad
S_{k_n,n} = S_{j,n} \leq \frac{1}{2} S_{j-1,n_{j-1}},
$$
so that $S_{k_n,n}$ converges to $0$; thus, the Lemma is established for one sequence. We extend this to a countable family of double-indexed sequences $ u_{k,n}^{(m)} $ by defining
$
v_{k,n} := \sum_{m=1}^k u_{k,n}^{(m)},
$
which converges to $0$ for every fixed $k$; by the above argument, there exists a sequence $k_n$ such that
$$
\sum_{j=1}^{k_n} v_{j,n}
\xrightarrow[n \rightarrow \infty]{}
0,
\quad 
\text{so that}
\quad
\forall m \in \N,
\
\sum_{j=1}^{k_n} u_{j,n}^{(m)}
\xrightarrow[n \rightarrow \infty]{} 0.
$$
Indeed, every term being positive, as $k_n \rightarrow \infty$, the latter sum can be bounded by the first one as soon as $k_n \geq m$. This concludes the proof of Lemma \ref{sequence_exists}.
\end{proof}

The following argument uses the multiplicative version of Lemma \ref{sequence_exists}; namely, if a countable family of double-indexed sequences $ p_{k,n}^{(m)}$ is such that $ p_{k,n}^{(m)} \rightarrow 1$ for every fixed $k$ and $m$, then there exists a sequence $(k_n)_{n \geq 1}$, going to infinity, such that for every $m$
$$
\prod_{j=1}^{k_n} p_{j,n}^{(m)} \rightarrow 1.
$$
Note that this existential statement does not give any estimate on the growth rate of $(k_n)$.

\begin{proof}[Proof of Proposition \ref{spherical_gamma2lim}]

We first recall how convergence to $\gamma_2^{-1}$ arises for the complex Ginibre ensemble; part of the argument then relies on comparison with this case, treated in \cite{BourgadeDubach}. The reason why this situation is more tractable is that the distribution of the diagonal overlap yields an exact expression: using a few classical identities of the beta and gamma distributions, we see that
$$
N \Ov_{1,1}^{-1}
\disteq
N \prod_{k=2}^N \left( 1 + \frac{\gamma_1^{(k)}}{\gamma_k} \right)^{-1}
\disteq
N \prod_{k=2}^N \beta_{k,1}
\disteq
N \beta_{2,N-1}
\distconv
\gamma_2.
$$
Now, for any sequence of integers $(k_n)_{n \geq 1}$ such that
\beq\label{sequence_cond}
1 \leq k_n \leq n, \quad
k_n \xrightarrow[n \rightarrow \infty]{} \infty,
\eeq
the same product can be decomposed as
$$
k_N \prod_{k=2}^{k_N} \left( 1 + \frac{\gamma_1^{(k)}}{\gamma_k} \right)^{-1}
\times
\frac{N}{k_N} \prod_{k=k_N+1}^{N} \left( 1 + \frac{\gamma_1^{(k)}}{\gamma_k} \right)^{-1}
\disteq
k_N \beta_{2, k_N-1} \times \frac{N}{k_N} \beta_{k_N+1,N-1}
$$
It is straightforward to check that
$$
k_N \beta_{2, k_N - 1} \distconv \gamma_2, \qquad
\frac{N}{k_N} \beta_{k_N+1,N} \distconv 1.
$$
In other words, the limit distribution $\gamma_2$ essentially depends on the first $k_N$ factors, provided $k_N$ goes to infinity. Similarly in the spherical case, using Theorem \ref{spherical_ov11} and Proposition \ref{Kostlan_conditioned}, we write:
\begin{align*}
N \Ov_{1,1}^{-1}
\disteq
k_N \prod_{k=2}^{k_N} \left( 1+ \frac{1+ \gamma_V(k)}{\gamma_V(k)} X_N^{(k)} \right)^{-1}
& \times
\frac{N}{k_N} \prod_{k=k_N+1}^{N} \left( 1+ \frac{1+ \gamma_V(k)}{\gamma_V(k)} X_N^{(k)} \right)^{-1} \\
=:
F(2,k_N) & \times F (k_N+1,N)
\end{align*}
We will prove that the first factor $F(2,k_N)$ converges to $\gamma_2$ for a suitable sequence $k_N$ that allows comparison with the complex Ginibre case, whereas the second factor $F(k_N+1,N)$ converges to~$1$. By the identity (\ref{spherical_gamma}), the independent variables involved are distributed as follows:
$$
F_{N,k} := 1+ \frac{1+ \gamma_V(k)}{\gamma_{V}(k)} X_N^{(k)}
\disteq
 1+ \frac{ X_N }{\beta_{k,N+1-k}}.
$$
where $X_N$ is defined by (\ref{def_Xm}).

\paragraph{Convergence of $F(2,k_N)$ to $\gamma_2$, for a suitable sequence $(k_N)$.} 
For fixed $k$, each term $F_{N,k}$ converges to its analog in the complex Ginibre case. Indeed,
$$
N X_N \distconv \gamma_1, 
\quad \text{and} \quad
N \beta_{k,N-k+1} \distconv \gamma_k,
$$
so that
$$
F_{N,k}
\disteq
1 + \frac{N X_N}{N \beta_{k,N-k+1}}
\distconv 
1 + \frac{\gamma_1}{\gamma_k}.
$$
The function $x \mapsto x^{-m}$ being smooth and bounded on $(1, \infty)$ for any integer $m$, we have that
$$
\E F_{N,k}^{-m}  \xrightarrow[N \rightarrow \infty]{} \E \left( 1 + \frac{\gamma_1}{\gamma_k} \right)^{-m} 
$$
and so, by the multiplicative version of Lemma \ref{sequence_exists} applied to the appropriate fraction of moments, there exists a sequence $k_n$ verifying (\ref{sequence_cond}), such that for every $m$,
$$
\prod_{k=2}^{k_N}
\frac{\E F_{N,k}^{-m}}{\E \left( 1 + \frac{\gamma_1}{\gamma_k} \right)^{-m}} \rightarrow 1
$$
which implies, by comparison with the product arising in the complex Ginibre case,
$$
\E \left( F(2,k_N)^m \right) =
 k_N^m \E \prod_{k=2}^{k_N} F_{N,k}^{-m} \sim
 k_N^m \E \prod_{k=2}^{k_N} \left( 1 + \frac{\gamma_1}{\gamma_k} \right)^{-m} 
\xrightarrow[]{} (m+1)! = \E \gamma_2^m,
$$
so that we have
$$
F(2,k_N) \distconv \gamma_2.
$$
\paragraph{Convergence of $F(k_N+1,N)$ to the constant $1$.}
Let $k_n$ be the sequence of integers used in the first part of the argument; in particular, it satisfies (\ref{sequence_cond}). We check that this is enough to ensure the convergence of $F(k_N+1,N)$ to $1$. A straightforward computation, similar to the one performed in Proposition \ref{sph_expectation_N}, yields
$$
\E F_{N,k} = \frac{k}{k-1}, \qquad
\E F_{N,k}^2 = \frac{k}{k-2}
$$
so that, thanks to telescopic products, we obtain the following expressions
$$
\E \left( \prod_{k=k_N+1}^N F_{N,k} \right) = \frac{N}{k_N}, 
\qquad
\E \left( \prod_{k=k_N+1}^N F_{N,k}^2 \right) = \frac{N (N-1)}{ k_N (k_N-1)} .
$$
As $k_N$ verifies condition (\ref{sequence_cond}), 
$$
\E \left( F(k_N+1,N)^{-1} \right) = 1,
\qquad
\var \left( F(k_N+1,N)^{-1} \right)
= \frac{N-k_N}{N (k_N-1)}
\rightarrow 0,
$$
which proves that
$ F(k_N+1,N)^{-1} \xrightarrow[N \rightarrow \infty]{L^2} 1 $, and in particular 
$ F(k_N+1,N) \distconv 1 $, concluding the second half of the proof. The claim of the Theorem follows by Slutsky's theorem.\end{proof}

The following proposition relies on the spherical structure of $\Sph(N)$ and has no analog in Section~\ref{tue_section}.

\begin{proposition}\label{spherical_invariance}
The distribution of $\Ov_{1,1}$ conditionally on the event $\{ \la_1 = z \}$, for $z \in \C$, does not depend on $z$.
\end{proposition}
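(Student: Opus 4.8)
The plan is to exploit the rotational symmetry of the sphere, the structural feature pointed out in the remark preceding the statement. The crux is to notice that the coefficient appearing in Theorem~\ref{spherical_ov11} is, up to a constant, a reciprocal squared chordal distance on $\mathbb{S}^2$: by the identity~(\ref{stereo_identity}),
\[
\frac{(1+|\la_1|^2)(1+|\la_k|^2)}{|\la_1-\la_k|^2}=\frac{4}{\|p(\la_1)-p(\la_k)\|_{\R^3}^2},
\]
and by Theorem~\ref{sph_joint_density} the eigenvalues are the $p^{-1}$-image of the $\mathrm{SO}(3)$-invariant point process on $\mathbb{S}^2$ with density proportional to $\prod_{i<j}\|\bw_i-\bw_j\|_{\R^3}^2$. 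Everything below is a transparent consequence of this covariance.

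First I would record the conditional law of the remaining eigenvalues as a process on $\mathbb{S}^2$. Writing $\bw_k=p(\la_k)$, Theorem~\ref{sph_joint_density} together with the change of variables used in the proof of Proposition~\ref{spherical_name} gives that, conditionally on $\{\la_1=z\}$, the points $(\bw_k)_{2\leq k\leq N}$ have joint density proportional to
\[
\prod_{2\leq i<j\leq N}\|\bw_i-\bw_j\|_{\R^3}^2\ \prod_{k=2}^N\|p(z)-\bw_k\|_{\R^3}^2
\]
with respect to the surface measure. This family of densities is $\mathrm{SO}(3)$-equivariant: substituting $\bw_k=R\bw_k'$ for $R\in\mathrm{SO}(3)$ and using that $R$ is an isometry shows that the process conditioned on the distinguished point being at $Rq$ is the $R$-pushforward of the process conditioned on it being at $q$. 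Since $\mathrm{SO}(3)$ acts transitively on $\mathbb{S}^2$, choose $R$ with $R\,p(0)=p(z)$; then, conditionally on $\{\la_1=z\}$, the family $(\bw_k)_{k\geq2}$ has the same law as $(R\bw_k')_{k\geq2}$, where $(\bw_k')_{k\geq2}$ is distributed as the corresponding process conditioned on $\{\la_1=0\}$.

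Finally I would feed this into Theorem~\ref{spherical_ov11}: conditionally on $\{\la_1=z\}$,
\[
\Ov_{1,1}\disteq\prod_{k=2}^N\Bigl(1+\frac{4}{\|p(z)-\bw_k\|_{\R^3}^2}\,X_N^{(k)}\Bigr)
\]
with the $X_N^{(k)}$ i.i.d.\ as in~(\ref{def_Xm}) with $m=N$ and independent of $(\bw_k)$. Substituting $\bw_k=R\bw_k'$ and using $\|p(z)-R\bw_k'\|_{\R^3}=\|R\,p(0)-R\bw_k'\|_{\R^3}=\|p(0)-\bw_k'\|_{\R^3}$ turns each factor into $1+4\|p(0)-\bw_k'\|_{\R^3}^{-2}X_N^{(k)}$, so that the product is exactly the expression furnished by Theorem~\ref{spherical_ov11} for $\Ov_{1,1}$ conditionally on $\{\la_1=0\}$. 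Hence the conditional law of $\Ov_{1,1}$ given $\{\la_1=z\}$ is the same as given $\{\la_1=0\}$ for every $z\in\C$, which is the claim. I do not expect a real obstacle: the only step requiring care is the bookkeeping in transferring the density from $\C^{N-1}$ to $(\mathbb{S}^2)^{N-1}$ (the Jacobian factors, the distinguished role of the first coordinate, exchangeability of the others), and since the conditioning is on an event carrying an honest density no disintegration subtlety intervenes. One can alternatively see the invariance directly at the level of matrices — $\Sph(N)$ is stable under $G\mapsto(aG+bI_N)(cG+dI_N)^{-1}$ for any $2\times2$ unitary matrix with entries $a,b,c,d$, via $G=G_1G_2^{-1}$ and the invariance of the stacked Ginibre matrix under left multiplication by $2N\times2N$ unitaries with $N\times N$ scalar blocks, a map that fixes eigenvectors (hence diagonal overlaps) while sending the eigenvalues through the Möbius transformation $\la\mapsto\frac{a\la+b}{c\la+d}$ — but that route needs a further continuity-in-$z$ argument to pass from almost every $z$ to every $z$, which the approach above avoids.
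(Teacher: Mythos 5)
Your proof is correct and follows essentially the same route as the paper: both rely on the identity~(\ref{stereo_identity}) to recast the coefficients of Theorem~\ref{spherical_ov11} as reciprocal chordal distances $\|p(\la_1)-p(\la_k)\|_{\R^3}^{-2}$, and then invoke the rotational invariance of the spherical point process of Proposition~\ref{spherical_name} to conclude. The only cosmetic difference is that you transport the conditional density explicitly as an $\mathrm{SO}(3)$-pushforward, whereas the paper phrases the same invariance through test functions and the tower property of conditional expectation.
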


\begin{proof}
Recall that the Jacobian of $p$ at $\la \in \C$ is $\frac{4}{(1+|\lambda|^2)^2}$ and that, for any $\lambda, \mu \in \C$, identity (\ref{stereo_identity}) holds. For any continuous and bounded function $F$ of $N-1$ variables, evaluated in
$$
l_k := \frac{ 4 |\la_1 - \la_k|^2}{ (1+|\la_1 |^2)(1+|\la_k |^2)}
\quad
k=2, \dots, N
$$
we have for any $z \in \C$, by a straightforward change of variables,
\beq\label{general_invariance}
\E_{ \{ \la_1 = z\} } \left( F(l_2, \dots, l_N) \right)
=
\E_{\{ w_1 = p(z) \}} \left( F \left(\| w_1 - w_2 \|^2, \dots, \| w_1 - w_N \|^2 \right) \right),
\eeq
where $(w_1,\dots,w_N)$ is a point process on the sphere with density proportional to (\ref{point_process_1}). As the expectation on the right hand side does not depend on $z$ (by invariance under orthogonal transformations), neither does the one on the left hand side. The claim follows by noting that for any continuous and bounded function $G$, by the tower property of conditional expectation,
$$
\E_{\{ \la_1 = z\}} G(\Ov_{1,1}) = \E_{\{ \la_1 = z\}} F(l_2, \dots, l_N)
$$
where $F(l_2, \dots, l_N) : = \E_{\La} G(\Ov_{1,1}) $ is indeed a function of the variables $l_2, \dots, l_N$.
\end{proof}

Clearly, Propositions \ref{sph_expectation_N}, \ref{spherical_gamma2lim} and \ref{spherical_invariance} provide together a full proof of Theorem \ref{spherical_limit}.

\begin{theorem}\label{spherical_ov12} The quenched expectation of off-diagonal overlaps in the spherical ensemble is given by the formula
\beq
\E_{\La} \left( \Ov_{1,2} \right) = - \frac{1}{N |\la_1 - \la_2|^2}
\prod_{k=3}^N \left( 1+  \frac{(1 + \la_1 \overline{\la_2})(1 + |\la_k|^2)}{N (\la_1 -
\la_k) (\overline{\la_2 - \la_k})}\right)
\eeq
\end{theorem}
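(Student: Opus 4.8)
The plan is to run, over the filtration $(\mathscr{F}_d)_{d\geq 2}$, the same kind of recursion that underlies the proof of Theorem~\ref{spherical_ov11}, but now for partial sums adapted to the off-diagonal overlap. For $2\leq d\leq N$ set
$$
\Ov_{1,2}^{(d)}\deq-\overline{b_2}\sum_{i=1}^{d}b_i\overline{d_i}=-\overline{b_2}\,B_d D_d^{*},
$$
so that $\Ov_{1,2}^{(2)}=-|b_2|^2$ and $\Ov_{1,2}=\Ov_{1,2}^{(N)}$. It is important to keep the factor $-\overline{b_2}$ \emph{inside} the partial sums rather than pulling it out at the end: since $b_2\in\mathscr{F}_2$ it is inert under every $\E_{\mathscr{F}_d}$ with $d\geq 2$, and keeping it attached is exactly what makes the recursion close multiplicatively (factoring it out would reintroduce the correlation between $\overline{b_2}$ and the later coefficients $b_i,d_i$ produced by~(\ref{vect_recurrence})). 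By~(\ref{vect_recurrence}), for $d\geq 2$,
$$
\Ov_{1,2}^{(d+1)}=\Ov_{1,2}^{(d)}-\frac{\overline{b_2}}{(\la_1-\la_{d+1})(\overline{\la_2-\la_{d+1}})}\,(B_d u_{d+1})(u_{d+1}^{*}D_d^{*}).
$$

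The key computation is that of $\E_{\mathscr{F}_d}$ of the bilinear form $(B_d u_{d+1})(u_{d+1}^{*}D_d^{*})$; contrary to the diagonal case, only its first moment is needed here. By Proposition~\ref{sph_subschur} the submatrix $T_{d+1}$ has, conditionally on $\La$, density proportional to $\det(H_{d+1})^{-(N+d+1)}$, so Lemma~\ref{sph_key_prop} with $n=d+1$, $p=N+d+1$, generic $f$, and $g(u)=(B_d u)(u^{*}D_d^{*})$ shows that, conditionally on $\mathscr{F}_d$, one may replace $u_{d+1}$ by $S_d\bV_{N+d+1}^{(d)}$ with $\bV_{N+d+1}^{(d)}$ independent of $\mathscr{F}_d$, giving
$$
\E_{\mathscr{F}_d}\!\left[(B_d u_{d+1})(u_{d+1}^{*}D_d^{*})\right]=B_d S_d\,\E\!\left[\bV_{N+d+1}^{(d)}\bigl(\bV_{N+d+1}^{(d)}\bigr)^{*}\right]S_d D_d^{*}.
$$
Since the density~(\ref{def_Vpn}) is invariant under unitary rotations, $\E\bigl[\bV_p^{(n)}(\bV_p^{(n)})^{*}\bigr]$ is a scalar multiple of $I_n$, with scalar $\E|v_i|^2=\tfrac{1}{p-n-1}$ by the identification of the coordinates of $\bV_p^{(n)}$ with $X_{p-n-1}$ recorded before Lemma~\ref{sph_key_prop}; for $p=N+d+1$ and $n=d$ this equals $1/N$. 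Using $S_d^2=(1+|\la_{d+1}|^2)H_d$ from~(\ref{def_HnSn}), the right-hand side becomes $\tfrac1N(1+|\la_{d+1}|^2)\,B_d(I_d+T_dT_d^{*})D_d^{*}$, and the two left-eigenvector identities $B_d T_d=\la_1 B_d$ and $D_d T_d=\la_2 D_d$ (both immediate from the upper-triangular form of $T$, the first being already used in the proof of Theorem~\ref{spherical_ov11}) collapse it to
$$
B_d(I_d+T_dT_d^{*})D_d^{*}=B_d D_d^{*}+(B_d T_d)(D_d T_d)^{*}=(1+\la_1\overline{\la_2})\,B_d D_d^{*}.
$$

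Plugging these in and recognising $-\overline{b_2}\,B_d D_d^{*}=\Ov_{1,2}^{(d)}$, I obtain, for $2\leq d\leq N-1$,
$$
\E_{\mathscr{F}_d}\!\left[\Ov_{1,2}^{(d+1)}\right]=\Ov_{1,2}^{(d)}\left(1+\frac{(1+\la_1\overline{\la_2})(1+|\la_{d+1}|^2)}{N(\la_1-\la_{d+1})(\overline{\la_2-\la_{d+1}})}\right).
$$
Iterating from $d=2$ to $d=N-1$ by the tower property over $(\mathscr{F}_d)$ and then taking $\E_\La$ gives
$$
\E_\La\!\left(\Ov_{1,2}\right)=\E_\La\!\left(\Ov_{1,2}^{(2)}\right)\prod_{k=3}^{N}\left(1+\frac{(1+\la_1\overline{\la_2})(1+|\la_k|^2)}{N(\la_1-\la_k)(\overline{\la_2-\la_k})}\right),
$$
and it remains to compute the base case $\E_\La(\Ov_{1,2}^{(2)})=-\E_\La|b_2|^2=-\E_\La|T_{1,2}|^2/|\la_1-\la_2|^2$, a routine one-dimensional integral over $T_{1,2}$ obtained from the $n=2$ instance of Proposition~\ref{sph_subschur} (equivalently, from the $d=1$ step of the recursion in Theorem~\ref{spherical_ov11}). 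Substituting its value furnishes the prefactor and completes the proof.

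Overall this is a routine variant of the proof of Theorem~\ref{spherical_ov11}, and I do not anticipate any genuine obstacle; the two points deserving attention are the choice of partial sums (keeping $-\overline{b_2}$ attached, so the recursion stays multiplicative) and the algebraic identity $B_d(I_d+T_dT_d^{*})D_d^{*}=(1+\la_1\overline{\la_2})\,B_d D_d^{*}$, which uses both eigenvector relations simultaneously and is where the factor $1+\la_1\overline{\la_2}$ — in place of the $1+|\la_1|^2$ of the diagonal overlap — originates.
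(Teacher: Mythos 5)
Your proposal is correct and follows essentially the same route as the paper: the same partial sums $\Ov_{1,2}^{(d)}$ with $-\overline{b_2}$ kept attached, the recursion via Proposition \ref{sph_subschur} and Lemma \ref{sph_key_prop}, the identity $B_d(I_d+T_dT_d^{*})D_d^{*}=(1+\la_1\overline{\la_2})B_dD_d^{*}$ from the two left-eigenvector relations, and the base case $\E|u_2|^2=1/N$. The only cosmetic difference is that you derive the needed integral from rotational invariance of $\bV_p^{(n)}$ where the paper cites (\ref{sph_integral3}) of Lemma \ref{spherical_integrals}, which is the same computation.
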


\begin{proof} 
Similarly to the diagonal case, we define the partial sums
$$
\Ov_{1,2}^{(d)} : = - \overline{b_2} \sum_{i=2}^d b_i \overline{d_i} .
$$
It follows from the facts presented in Section \ref{overlaps_subsec} that
$$
\Ov_{1,2}^{(2)} = - |b_2|^2 = \frac{-|u_2|^2}{|\la_1 - \la_2|^2}.
$$
One can check, following the proof of Theorem \ref{spherical_ov11}, that $|u_2|^2 \disteq X_N$, so that $$
\E |u_2|^2 =
\frac{1}{N}
\quad \text{and} \quad
\E_{\La} \Ov_{1,2}^{(2)} 
= \frac{- 1}{N |\la_1 - \la_2|^2},
$$
which initiates the recurrence. We now compute the conditional expectation of $b_{n+1} \overline{d_{n+1}}$ by integrating out the vector $u_{n+1}$. We use Proposition \ref{sph_subschur} and (\ref{sph_integral3}) from Lemma
\ref{spherical_integrals} with $a= B_n^*$, $b= D_n^*$ and $S=S_n$
such that $S_n^2 = (1+|\la_n|^2)(I_{n-1} + T_{n-1} T_{n-1}^*)$.
It follows that
$$
\E_{\La, \mathscr{F}_{n-1}} b_{n+1} \overline{d_{n+1}}
=
\frac{1}{N (\la_1 - \la_{n+1}) (\overline{\la_2 - \la_{n+1}})}
B_n S^2 D_n^*
=
\frac{(1+|\la_{n+1}|^2)}{N (\la_1 - \la_{n+1}) (\overline{\la_2 - \la_{n+1}})}
( B_n  D_n^* + B_n T T^* D_n^* )
$$
We notice that, as $T$ is triangular and $B_n$, $D_n$ are subvectors of $L_1$ and $L_2$,
$$
B_n T_n = \la_1 B_n, \quad
D_n T_n = \la_2 D_n,
$$
which gives
$$
- \overline{b}_2 \E_{\La, \mathscr{F}_{n}} b_{n+1} \overline{d_{n+1}}
=
\frac{(1+|\la_{n+1}|^2) (1 + \la_1 \overline{\la_2})}{N (\la_1 - \la_{n+1}) (\overline{\la_2 - \la_{n+1}})}
\Ov_{1,2}^{(n)}.
$$
The factorization follows.
\end{proof}

\begin{proposition}\label{spherical_mixedmom} The conditional expectation of $
\frac{1}{N} \tr G^* G$ with $G$ distributed according to $\Sph(N)$ is given
by the formula:
$$
\E_{\La} \left( \frac{1}{N} \tr G^* G \right)
 =
\prod_{i=1}^N \left( 1+ \frac{ 1+ |\la_i|^2}{N} \right)
-2.
$$
\end{proposition}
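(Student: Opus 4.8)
The plan is to reduce to the Schur form $T$ of $G$ and run a one–step recursion on the nested submatrices $T_n$, in the spirit of the proofs of Theorems~\ref{spherical_ov11} and \ref{spherical_ov12}, but now tracking the full quantity $\tr(T_nT_n^*)$ rather than a single overlap. Since $\tr G^*G$ is unchanged by the unitary conjugation sending $G$ to $T$, we have $\tr G^*G=\tr(T_NT_N^*)=\tr H_N-N$, where $H_n=I_n+T_nT_n^*$ as in (\ref{def_HnSn}). Reading off the block decomposition $T_n=\left(\begin{smallmatrix}T_{n-1}&u_n\\0&\la_n\end{smallmatrix}\right)$ gives the deterministic identity $\tr H_n=\tr H_{n-1}+1+|\la_n|^2+\|u_n\|^2$, so the whole computation reduces to evaluating $\E_\La\big(\|u_n\|^2\mid\mathscr F_{n-1}\big)$.

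\textbf{The key conditional expectation.} By Proposition~\ref{sph_subschur}, conditionally on $\La$ the submatrix $T_n$ has density proportional to $\det(I_n+T_nT_n^*)^{-(N+n)}$. Applying the change of variables $u_n=S_{n-1}v_n$ from Lemma~\ref{sph_key_prop} with $p=N+n$ shows that, conditionally on $\mathscr F_{n-1}$ and $\La$, one has $u_n\disteq S_{n-1}\bV_{N+n}^{(n-1)}$ with $\bV_{N+n}^{(n-1)}$ independent of $\mathscr F_{n-1}$. Its density on $\C^{n-1}$ depends only on $v^*v$, hence is invariant under unitary rotations, so its covariance matrix is a scalar multiple of the identity; the scalar is $\E|v_i|^2=\E X_N=1/N$ by Lemma~\ref{spherical_distributions} (the relevant parameter being $p-(n-1)-1=N$) and the mean of (\ref{def_Xm}). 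Since $S_{n-1}$ is Hermitian with $S_{n-1}^2=(1+|\la_n|^2)H_{n-1}$, this gives
\[
\E_{\La,\mathscr F_{n-1}}\|u_n\|^2=\E\big(v_n^*S_{n-1}^2v_n\big)=\tfrac1N\tr\!\big(S_{n-1}^2\big)=\frac{1+|\la_n|^2}{N}\,\tr H_{n-1}.
\]

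\textbf{Solving the recursion.} Set $a_n:=\E_\La\tr H_n$. Taking $\E_\La$ of the identity from the first paragraph and inserting the previous display (using that $H_{n-1}$ is $\mathscr F_{n-1}$–measurable) yields
\[
a_n=a_{n-1}\Big(1+\tfrac{1+|\la_n|^2}{N}\Big)+(1+|\la_n|^2),\qquad a_1=1+|\la_1|^2.
\]
The affine substitution $b_n:=a_n+N$ turns this into the multiplicative recursion $b_n=b_{n-1}\big(1+\tfrac{1+|\la_n|^2}{N}\big)$ with $b_1=N\big(1+\tfrac{1+|\la_1|^2}{N}\big)$, so $b_N=N\prod_{i=1}^N\big(1+\tfrac{1+|\la_i|^2}{N}\big)$ and hence $a_N=N\prod_{i=1}^N\big(1+\tfrac{1+|\la_i|^2}{N}\big)-N$. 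Finally $\E_\La\tr G^*G=a_N-N=N\prod_{i=1}^N\big(1+\tfrac{1+|\la_i|^2}{N}\big)-2N$, and dividing by $N$ gives the claimed formula.

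\textbf{Main obstacle.} There is no deep difficulty here; the one delicate point is the bookkeeping in the middle step — correctly reading off the exponent $p=N+n$ from Proposition~\ref{sph_subschur}, hence the parameter $N$ governing the coordinate law of $\bV_{N+n}^{(n-1)}$, and observing that only the \emph{full} second–moment matrix of $\bV_{N+n}^{(n-1)}$ (a scalar multiple of the identity, by rotational invariance) is needed, rather than the rank-one projections used in the overlap proofs. One should also double–check the initialization $a_1=1+|\la_1|^2$ and the algebra of the substitution $b_n=a_n+N$ that linearizes the recursion.
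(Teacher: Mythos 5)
Your proof is correct and follows essentially the same route as the paper: reduce to the Schur form, use Proposition~\ref{sph_subschur} together with the second-moment computation for $u_n$ (the paper cites (\ref{sph_integral4}) of Lemma~\ref{spherical_integrals}, which is exactly your rotational-invariance calculation) to get the recursion for the conditional expectation of $\tr T_nT_n^*$, and then linearize it by an affine shift. The only cosmetic difference is that you track $\tr H_n=\tr T_nT_n^*+n$ and shift by $N$, where the paper tracks $\tr T_nT_n^*$ and shifts by $N+n$; the two substitutions are identical.
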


\begin{proof} It is clear that $\tr G^* G = \tr T_N^*T_N$, and that for any
$n \leq N$,
$$\tr T_nT_n^* = |\la_n|^2 + \|u_n\|^2 + \tr T_{n-1}T_{n-1}^*,$$
so that defining
$$v_{N,n} = v_{N,n} (\la_1, \dots, \la_n)  : = \E_{N, \La} \tr T_nT_n^*,$$
yields a recursion with $ v_{N,1} = |\la_1|^2 $ and, using Proposition \ref{sph_subschur} and
(\ref{sph_integral4}) from Lemma \ref{spherical_integrals},
\begin{eqnarray}\label{sph_recv}
 v_{N,n+1}
%& = &  v_{N,n} + |\la_{n+1}|^2 +\frac{1}{N} (1+|\la_{n+1}|^2) (n + v_{N,n}) \\
 & = &  v_{N,n} \left(1+ \frac{1+|\la_{n+1}|^2}{N} \right) +
 |\la_{n+1}|^2 +\frac{n}{N} \left(1+ |\la_{n+1}|^2 \right).
\end{eqnarray}
This suggests the introduction of
$
w_{N,n} = v_{N,n} + N + n,
$
for which we see that 
$$
w_{N,1} = N \left( 1+ \frac{1+ |\la_1|^2}{N}\right)
\quad \text{and} \quad
w_{N,n+1} = w_{N,n} \left( 1 + \frac{1+|\la_{n+1}|^2}{N} \right),
$$
so that for every $n \leq N$,
$$
\frac{1}{N} v_{N,n}
 =
\prod_{i=1}^N \left( 1 + \frac{1+|\la_i|^2}{N}\right)
- \left( 1 + \frac{n}{N}  \right)
$$
which is equivalent to the statement, when $n=N$.
\end{proof}

\subsection{Constants and integrals}\label{spherical_estimates}

\begin{lemma}\label{spherical_gammagen}
The normalization constant for generalized gamma variables $\gamma_{V,k}$ with potential $V(x)= M \log(1+x)$ and $1 \leq k \leq M-1$ is
$$
\int_{\R_+} \frac{x^{k-1}}{(1+x)^M} \dd x = \beta(M-k,k),
$$
and $\gamma_{V,k} \disteq \frac{1}{\beta_{M-k,k}}-1.$
Moreover, the associated function $e_V^{(M-2)}$ is given by
$$
e_V^{(M-2)} = (M-1) (1+X)^{M-2} .
$$
\end{lemma}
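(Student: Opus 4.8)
The statement splits into three independent computations, all of them elementary integrals. The plan is to handle them in turn.

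First, the normalization constant. We must evaluate $\int_{\R_+} x^{k-1}(1+x)^{-M}\,\dd x$. The natural move is the substitution $x = t/(1-t)$, $t\in(0,1)$, under which $1+x = 1/(1-t)$ and $\dd x = (1-t)^{-2}\,\dd t$; the integrand becomes $t^{k-1}(1-t)^{-(k-1)}\cdot(1-t)^{M}\cdot(1-t)^{-2}\,\dd t = t^{k-1}(1-t)^{M-k-1}\,\dd t$, whose integral over $(0,1)$ is exactly $\beta(M-k,k)$ by the definition of the Beta function (this requires $M-k>0$, guaranteed by $k\le M-1$). From this, the density of $\gamma_{V,k}$ is $\beta(M-k,k)^{-1}x^{k-1}(1+x)^{-M}\mathds{1}_{\R_+}$; pushing forward through the same map $x\mapsto t=x/(1+x)$ sends this to the $\mathrm{Beta}(k,M-k)$ density in $t$, i.e. $t = 1/(1+x)$ has law $\beta_{M-k,k}$ (being $1$ minus a $\mathrm{Beta}(k,M-k)$ variable), hence $\gamma_{V,k} = x \disteq \beta_{M-k,k}^{-1}-1$. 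I would just record this as a change of variables.

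Second, the formula for $e_V^{(M-2)}$. By the definition given in the excerpt, $e_V^{(M-2)}(X) = \sum_{j=0}^{M-2} X^j/\Gamma_V(j+1)$, and by the first part $\Gamma_V(j+1) = \int_{\R_+} x^{j}(1+x)^{-M}\,\dd x = \beta(M-j-1,j+1) = \frac{\Gamma(M-j-1)\Gamma(j+1)}{\Gamma(M)}$. Thus $1/\Gamma_V(j+1) = \frac{(M-1)!}{(M-j-2)!\,j!} = (M-1)\binom{M-2}{j}$, and therefore $e_V^{(M-2)}(X) = (M-1)\sum_{j=0}^{M-2}\binom{M-2}{j}X^j = (M-1)(1+X)^{M-2}$ by the binomial theorem. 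The only point to be careful about is that the sum really does terminate at $j=M-2$ and that all binomial coefficients $\binom{M-2}{j}$ for $0\le j\le M-2$ are the full set, so the binomial identity applies cleanly.

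There is essentially no hard step here: everything reduces to the Beta integral $\int_0^1 t^{a-1}(1-t)^{b-1}\,\dd t = \beta(a,b)$ together with the factorial identity $\Gamma(M)/(\Gamma(M-j-1)\Gamma(j+1)) = (M-1)\binom{M-2}{j}$. If I had to flag a potential pitfall, it would be bookkeeping with the shift in parameters — keeping straight that $\Gamma_V(\alpha)$ with $\alpha=j+1$ corresponds to exponent $x^{j}$, not $x^{j+1}$, in the integral, and that the upper limit $M-2$ in $e_V^{(M-2)}$ is exactly the largest $j$ for which $M-j-1\ge 1$ so that the Gamma functions involved are all finite. Once the indices are lined up, the three claims follow by direct substitution and the binomial theorem.
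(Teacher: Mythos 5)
Your proof is correct and follows essentially the same route as the paper: the first two claims via the change of variables $u = 1/(1+x)$ reducing the integral to a Beta integral, and the last via the identity $1/\Gamma_V(j+1) = (M-1)\binom{M-2}{j}$ and the binomial theorem (the paper writes this coefficient as $(M-j-1)\binom{M-1}{j}$ and splits the sum into two binomial expansions, but the algebra is equivalent). No gaps.
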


\begin{proof}
Let us compute, for any suitable function $f$,
\begin{align*}
    \int_{\R_+} \frac{x^{k-1}}{(1+x)^M} f(x) \dd x
%    & = \int_1^{+\infty} \frac{(x-1)^{k-1}}{x^M} f(x-1) \dd x  \\
    & = \int_0^1 x^{M-k-1} (1-x)^{k-1} f \left( \frac{1}{x}-1 \right) \dd x
    % \\
    &= \beta (M-k,k) \E f \left( \frac{1}{\beta_{M-k,k}} - 1 \right),
\end{align*}
which implies the first claim. As
$$
\frac{1}{\Gamma_V(k)} = \frac{1}{ \beta(M-k,k) }
= \frac{\Gamma(M)}{\Gamma(M-k) \Gamma(k)}
= (M-k) \binom{M-1}{k-1},
$$
we find that
$$
e_V^{(M-2)} (X) 
%= \sum_{k=0}^{M-2} (M-k-1) \binom{M-1}{k} X^k
%= \sum_{k=0}^{M-1} (M-k-1) \binom{M-1}{k} X^k
= (M-1) (1+X)^{M-1} - X (M-1) (1+X)^{M-2}
= (M-1) (1+X)^{M-2}.
$$
which is the second claim.
\end{proof}

\begin{lemma}\label{spherical_constants}
For any $p>n$,
\begin{align}\label{sph_ct0}
C_{n,p} & := \int_{z \in \C^{n}} \frac{1}{ \left(1 + \sum_{i=1}^{n}
|z_i|^2 \right)^p } \dd m(z_1) \dots \dd m(z_{n}) = \pi^n
\frac{(p-n-1)!}{(p-1)!},
\end{align}
and for $p>n+1$,
\begin{align}\label{sph_ct1}
C_{n,p}^{(1)} & := \int_{z \in \C^{n}} \frac{|z_1|^2}{ \left(1 +
\sum_{i=1}^{n} |z_i|^2 \right)^p } \dd m(z_1) \dots \dd m(z_{n}) =
\frac{1}{p-(n+1)} C_{n,p}.
\end{align}
\end{lemma}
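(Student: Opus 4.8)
The plan is to reduce both integrals to one–dimensional Beta integrals by passing to radial variables. First I would use, coordinate by coordinate, the identity $\int_{\C} f(|z|^2)\,\dd m(z) = \pi \int_0^\infty f(r)\,\dd r$ valid for any integrable $f$ on $\R_+$, which gives
\[
C_{n,p} = \pi^n \int_{\R_+^n} \frac{\dd r_1 \cdots \dd r_n}{(1 + r_1 + \cdots + r_n)^p},
\qquad
C_{n,p}^{(1)} = \pi^n \int_{\R_+^n} \frac{r_1\,\dd r_1 \cdots \dd r_n}{(1 + r_1 + \cdots + r_n)^p}.
\]
Integrating over the level sets $\{\sum r_i = s\}$ — whose $(n-1)$-dimensional content is $s^{n-1}/(n-1)!$, since the simplex $\{\sum r_i \le s\}$ has volume $s^n/n!$ — the first integral becomes $\frac{\pi^n}{(n-1)!}\int_0^\infty s^{n-1}(1+s)^{-p}\,\dd s = \frac{\pi^n}{(n-1)!}\,\beta(n,p-n)$, and simplifying with $\beta(n,p-n) = \frac{(n-1)!\,(p-n-1)!}{(p-1)!}$ yields $(\ref{sph_ct0})$. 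Convergence holds precisely for $p > n$.

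\textbf{The case $C_{n,p}^{(1)}$.} Rather than running a separate computation, I would exploit the permutation invariance of the denominator: averaging over the coordinates gives $C_{n,p}^{(1)} = \tfrac1n \pi^n \int_{\R_+^n} (r_1 + \cdots + r_n)(1 + r_1 + \cdots + r_n)^{-p}\,\dd r$, and writing $r_1 + \cdots + r_n = (1 + r_1 + \cdots + r_n) - 1$ splits this as $\tfrac1n\bigl(C_{n,p-1} - C_{n,p}\bigr)$. Plugging $(\ref{sph_ct0})$ into both terms and using the ratio $C_{n,p-1} = \frac{p-1}{p-n-1}\,C_{n,p}$, one finds $C_{n,p-1} - C_{n,p} = \frac{n}{p-n-1}\,C_{n,p}$, hence $C_{n,p}^{(1)} = \frac{1}{p-n-1}\,C_{n,p}$, which is $(\ref{sph_ct1})$; the hypothesis $p > n+1$ is exactly what makes $C_{n,p-1}$ finite.

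\textbf{Obstacle.} There is essentially no difficulty here: the only points deserving a word of justification are the radial change of variables (Fubini–Tonelli, legitimate by positivity of the integrand), the geometric evaluation of the simplex slice, and the bookkeeping of Gamma-function identities. If one prefers to avoid the level-set statement, an equivalent route is a downward induction on $n$: integrating out the last variable via $\int_0^\infty (c + r_n)^{-p}\,\dd r_n = \frac{c^{-(p-1)}}{p-1}$ converts the $n$-dimensional integral with exponent $p$ into the $(n-1)$-dimensional one with exponent $p-1$, up to the factor $1/(p-1)$; iterating down to $n=0$ produces the telescoping product $\frac{1}{(p-1)(p-2)\cdots(p-n)} = \frac{(p-n-1)!}{(p-1)!}$ directly, and the same scheme applied to the integrand $r_1$ recovers $(\ref{sph_ct1})$.
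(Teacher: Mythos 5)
Your proof is correct, but it takes a different route from the paper's. The paper computes $C_{n,p}$ by induction on $n$: it integrates out the last coordinate $z_n$ using $\int_{\C}(1+\alpha^{-1}|z|^2)^{-p}\,\dd m(z)=\pi\alpha/(p-1)$, which yields the recursion $C_{n,p}=\frac{\pi}{p-1}C_{n-1,p-1}$ and hence the telescoping product $\frac{\pi^n}{(p-1)\cdots(p-n)}$; for $C_{n,p}^{(1)}$ it reruns the same induction with the weight $|z_1|^2$, the only change being the final one-dimensional step $C_{1,q}^{(1)}=\frac{\pi}{(q-1)(q-2)}$. This is exactly the ``downward induction'' you sketch as an alternative at the end, and it has the advantage of matching the one-coordinate-at-a-time structure the paper exploits everywhere else (Lemma \ref{sph_key_prop}, Proposition \ref{sph_subschur}). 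Your main route instead collapses the whole integral at once: radial variables, then the pushforward of Lebesgue measure under $r\mapsto\sum r_i$ (density $s^{n-1}/(n-1)!$, correctly obtained as the derivative of the simplex volume --- note this is the right object for Fubini even though it is not the Hausdorff measure of the slice), reducing everything to a single Beta integral. Your derivation of (\ref{sph_ct1}) from (\ref{sph_ct0}) by symmetrizing over coordinates and writing $\sum r_i=(1+\sum r_i)-1$, so that $C_{n,p}^{(1)}=\frac1n(C_{n,p-1}-C_{n,p})$, is slicker than the paper's second induction and requires no new integral at all; it also makes transparent why the hypothesis strengthens to $p>n+1$. Both arguments are complete and elementary; yours is shorter, the paper's is better adapted to its recursive machinery.
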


\begin{proof}
We first compute $C_{n,p}$ by induction on $n$. For $n=1, p>1$,
\begin{align*}
C_{1,p} = \int_{z \in \C} \frac{1}{ \left(1 + |z|^2 \right)^p } \dd m(z) =
 \pi \int_{r = 1}^{\infty} \frac{1}{ r^p } \dd r  = \frac{\pi}{p-1}
\end{align*}
and one can note that for any $\al>0$,
\begin{align*}
\int_{z \in \C} \frac{1}{ \left(1 + \al^{-1} |z|^2 \right)^p } \dd m(z) =
\frac{\pi \al}{p-1}.
\end{align*}
For general $n$, using the above equalities with $\al_n = 1+
\sum_{i=1}^{n-1} |z_i|^2$,
\begin{align*}
C_{n,p}
% & = \int_{z \in \C^n} \frac{1}{ \left(1 + \sum_{i=1}^n |z_i|^2 \right)^p} \dd m(z_1) \dots \dd m(z_n) \\
& = \int_{z \in \C^n} \frac{1}{ \left(1 + \sum_{i=1}^{n-1} |z_i|^2
\right)^p }
\times
\frac{1}{ \left(1 + \al_n^{-1} |z_n|^2 \right)^p }
\dd m(z_1) \dots \dd m(z_n) \\
&= \frac{\pi}{p-1} \int_{z \in \C^{n-1}} \frac{1}{ \left(1 +
\sum_{i=1}^{n-1} |z_i|^2 \right)^{p-1} }
\dd m(z_1) \dots \dd m(z_{n-1})
= \frac{\pi}{p-1} C_{n-1,p-1}
\end{align*}
Equation (\ref{sph_ct0}) follows. A similar induction can be performed on
$C_{n,p}^{(1)}$. The only difference is that the last step involves the
following identity: for any $p>2$,
\begin{align*}
C_{1,p}^{(1)} & = \int_{z \in \C} \frac{|z|^2}{ (1 + |z|^2)^p } \dd m(z)
% = \pi \int_{r \in \R_+} \frac{r}{ (1 + r)^p } \dd r
= \pi \int_{r = 1}^{\infty} \frac{r-1}{ r^p } \dd r
= \pi \left( \frac{1}{1-p} - \frac{1}{2-p} \right)
= \frac{\pi}{(p-1)(p-2)},
\end{align*}
which, in general, yields the extra factor $\frac{1}{p-(n+1)}$ in
(\ref{sph_ct1}).
\end{proof}

Note that when we begin the recursion from \cite{ForresterKrishnapur} with
$n=N-1,p=2N$, the extra factor is $\frac{1}{N}$ at every step.
%(beware the typo in the paper)
% $$
% C_{N,2N} = \frac{\pi^{N-1}}{(2N-1) \cdots N} C_{2,N} =
% \frac{\pi^{N} N!}{(2N-1)!}
% $$

\begin{lemma}\label{spherical_integrals}
For any $p>n$, $a,b \in \C^n$ and any Hermitian positive-definite matrix $S$,
\begin{align}
\int_{\C^n} \frac{ 1 }{(1+ u^* S^{-2} u)^{p}} \dd u
& = C_{n,p} |\det S|^2, \label{sph_integral1} \\
\int_{\C^n} \frac{ a^*u }{(1+ u^* S^{-2} u)^{p}} \dd u
& = 0, \label{sph_integral2} \\
\int_{\C^n} \frac{ (a^* u) (u^* b) }{(1+ u^* S^{-2} u)^{p}} \dd u
&= C_{n,p} |\det S|^2 \frac{ a^* S^2 b }{p-(n+1)}, \label{sph_integral3} \\
\int_{\C^n} \frac{ \|u\|^2 }{(1+ u^* S^{-2} u)^{p}} \dd u
&= C_{n,p} |\det S|^2 \frac{ \tr S^2 }{p-(n+1)}, \label{sph_integral4}
\end{align}
where the constant $C_{n,p}$ is explicitly computed in Lemma
\ref{spherical_constants}.
\end{lemma}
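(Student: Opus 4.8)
The plan is to reduce all four identities to the constant computations of Lemma \ref{spherical_constants} via the single substitution $u = S v$. Since $S$ is Hermitian positive-definite and acts $\C$-linearly on $\C^n$, this change of variables has (real) Jacobian $|\det S|^2$ and turns the quadratic form into $u^* S^{-2} u = v^* v = \|v\|^2$. Thus every integrand of the form (rational weight)$\times(1+u^*S^{-2}u)^{-p}$ becomes $|\det S|^2$ times the same rational weight in $v$ against $(1+\|v\|^2)^{-p}$, and the whole lemma is a matter of recording how the polynomial prefactors transform and then invoking (\ref{sph_ct0})--(\ref{sph_ct1}).

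Concretely: for (\ref{sph_integral1}), after the substitution the integral is exactly $|\det S|^2 \int_{\C^n}(1+\|v\|^2)^{-p}\,\dd v = C_{n,p}|\det S|^2$ by the definition (\ref{sph_ct0}). For (\ref{sph_integral2}), the substitution gives $|\det S|^2\int_{\C^n}(a^*Sv)(1+\|v\|^2)^{-p}\,\dd v$; the weight $(1+\|v\|^2)^{-p}$ is invariant under $v\mapsto e^{\ii\theta}v$ while the linear form $a^*Sv$ picks up a factor $e^{\ii\theta}$, so averaging over $\theta$ forces the integral to vanish (equivalently, it is an odd function of $v$ integrated against an even weight). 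For (\ref{sph_integral3}), writing $a^*S = (Sa)^*$ and $Sb$ for the transformed vectors, the substitution yields $|\det S|^2\sum_{i,j}\overline{(Sa)_i}\,(Sb)_j\int_{\C^n} v_i\overline{v_j}\,(1+\|v\|^2)^{-p}\,\dd v$; the off-diagonal terms $i\neq j$ vanish by rotating the single coordinate $v_i$, and each diagonal term equals $C_{n,p}^{(1)} = \frac{1}{p-(n+1)}C_{n,p}$ by (\ref{sph_ct1}) together with the permutation symmetry of the coordinates. This gives $C_{n,p}|\det S|^2\,\frac{(Sa)^*(Sb)}{p-(n+1)} = C_{n,p}|\det S|^2\,\frac{a^*S^2b}{p-(n+1)}$. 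Finally (\ref{sph_integral4}) follows from (\ref{sph_integral3}) by taking $a=b=e_k$ (the $k$-th standard basis vector) and summing over $k$, since $\|u\|^2 = \sum_k (e_k^*u)(u^*e_k)$ and $\sum_k e_k^*S^2 e_k = \tr S^2$.

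I do not expect any serious obstacle here: the argument is entirely bookkeeping once the substitution $u=Sv$ is in place. The two points to be careful about are that the Jacobian of a $\C$-linear map on $\C^n\cong\R^{2n}$ is $|\det S|^2$ rather than $|\det S|$, and that the symmetry argument killing the cross terms in (\ref{sph_integral3}) and the whole of (\ref{sph_integral2}) must be applied coordinatewise (a phase rotation of a single $v_i$) rather than by a global rotation, so that only the relevant monomials are affected. Everything else reduces to the explicit values of $C_{n,p}$ and $C_{n,p}^{(1)}$ already established.
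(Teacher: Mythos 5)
Your proof is correct and follows essentially the same route as the paper: the substitution $u = Sv$ with real Jacobian $|\det S|^2$, phase-rotation symmetry to kill the linear and cross terms, and the explicit constants $C_{n,p}$ and $C_{n,p}^{(1)}$ from Lemma \ref{spherical_constants}. The only cosmetic differences are that the paper organizes (\ref{sph_integral3}) via the rank-one matrix $A = Sba^*S$ and a unitary basis adapted to its kernel rather than your direct coordinate expansion, and proves (\ref{sph_integral4}) by a separate diagonalization of $S$ where you more economically sum (\ref{sph_integral3}) over the standard basis vectors.
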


\begin{proof} Integral (\ref{sph_integral1}) was computed in
\cite{ForresterKrishnapur}. (\ref{sph_integral2}) is zero by symmetry. For
(\ref{sph_integral3}), the change of variables $u=Sv$ yields
$$
 |\det S|^2 \int \dd v \frac{ (a^* S v) (v^* S b)}{ (1+ v^* v)^{p}}.
$$
We notice that
$$
(a^* S v) (v^* S b) = v^* (S b a^* S) v
= v^* A v
$$
where $A = S b a^* S$ is a matrix of rank $1$. If we express $v = \sum v_i e_i$ in a unitary basis such that the vectors $(e_2, \dots, e_n)$ form a basis of $\ker(A)$ and denote $A e_1 = \la_1(A) e_1 + \sum_{i \geq 2} \al_i e_i$,
$$
v^* A v 
%= \left(\sum_{i=1}^n v_i e_i \right)^* \left( \la_1(A) v_1 e_1 + \sum_{i \geq 2} \al_i v_1 e_i \right)
= \la_1(A) v_1^2 + \sum_{i \geq 2} \al_i v_1 v_i
$$
Therefore, after a unitary change of basis the integral becomes, using Lemma \ref{spherical_constants} and the fact that cross-terms $v_1 v_i$ vanish by symmetry,
$$
\int \dd v \frac{ \la_1(A) v_1^2}{ (1+ v_1^2 + \dots + v_n^2)^{p}}
=
\frac{ \la_1(A) }{p-(n+1)} C_{n,p}.
$$
The value of $\la_1(A)$ can be obtained by writing
$$
\la_1(A) = \tr S b a^* S = a^* S^2 b,
$$
from which the claim (\ref{sph_integral3}) follows. The same technique
applied to (\ref{sph_integral4}) yields
$$
|\det S|^2 \int \dd v \frac{ \| Sv \|^2}{ (1+ v^* v)^{p}}.
$$
and a unitary change of variable to a basis that diagonalizes $S$,
together with Lemma \ref{spherical_constants}, gives
$$
\int \dd v \frac{ \la_1(S^2) v_1^2 + \dots + \la_n(S^2) v_n^2}{ (1+ v_1^2
+ \dots + v_n^2)^{p}}
= (\la_1(S^2) + \cdots + \la_n(S^2)) \frac{1}{p-(n+1)} C_{n,p},
$$
concluding the proof of the last claim.
\end{proof}

\begin{lemma}\label{spherical_distributions}
For any $p>n$, $a \in \C^n$ and any Hermitian positive-definite matrix $S$, if $u \in \C^n$ is distributed with density
$$
\frac{1}{C_{n,p} |\det S|^2}
\frac{1}{ ( 1 + u^* S^{-2} u )^p}
$$
with respect to the Lebesgue measure on $\C^n$, then the following identity in distribution holds:
$$
| a^*u |^2 \disteq \|S a\|^2 X_{p-n-1}.
$$
\end{lemma}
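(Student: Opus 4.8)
The plan is to reduce the statement to a one-dimensional computation by an orthogonal (unitary) change of variables that aligns $Sa$ with the first coordinate axis. First I would perform the substitution $u = Sv$, under which the density of $u$ pushes forward to the density $\frac{1}{C_{n,p}} (1+v^*v)^{-p}$ on $\C^n$; the Jacobian factor $|\det S|^2$ cancels exactly the $|\det S|^2$ in the normalizing constant, so $v \sim \bV_p^{(n)}$ in the notation of (\ref{def_Vpn}). Under this change of variables, $a^* u = a^* S v = (S a)^* v$ (using that $S$ is Hermitian), so $|a^* u|^2 = |(Sa)^* v|^2$. Thus it suffices to show that for $w \in \C^n$ fixed and $v \sim \bV_p^{(n)}$, one has $|w^* v|^2 \disteq \|w\|^2 X_{p-n-1}$.

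Next I would exploit the rotational invariance of the density of $\bV_p^{(n)}$: since $(1+v^*v)^{-p}$ depends only on $\|v\|$, it is invariant under the unitary group $U(n)$. Choosing a unitary $O$ with $O w = \|w\| e_1$, we get $w^* v = \|w\| \overline{(Ov)_1} = \|w\| \overline{v_1'}$ in distribution, where $v' = Ov \disteq v$. Hence $|w^* v|^2 \disteq \|w\|^2 |v_1|^2$, and the problem is reduced to identifying the law of $|v_1|^2$ where $v_1$ is the first coordinate of $\bV_p^{(n)}$. This last fact is essentially already recorded in the paper just after (\ref{def_Xm}): a coordinate $v_i$ of $\bV_p^{(n)}$ satisfies $|v_i|^2 \disteq X_{p-n-1}$. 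If one wants to see it directly, integrate out $v_2, \dots, v_n$ using the constant computations of Lemma \ref{spherical_constants}: the marginal density of $v_1$ is proportional to $\int_{\C^{n-1}} (1 + |v_1|^2 + \sum_{i \geq 2}|v_i|^2)^{-p}\, \dd v_2 \cdots \dd v_n$, which by the scaling identity used in the proof of Lemma \ref{spherical_constants} (with $\alpha = 1 + |v_1|^2$) equals a constant times $(1+|v_1|^2)^{-(p-n+1)}$; passing to $x = |v_1|^2$ introduces a further Jacobian factor $\pi$, giving density proportional to $(1+x)^{-(p-n+1)}$ on $\R_+$, which is exactly (\ref{def_Xm}) with $m = p - n - 1$.

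I do not anticipate a genuine obstacle here; the argument is a sequence of standard changes of variables. The only points requiring a little care are bookkeeping: checking that the Jacobian $|\det S|^2$ in the substitution $u = Sv$ is real and positive (it is, since $S$ is Hermitian positive-definite, so one may write $|\det S|^2 = (\det S)^2$) and cancels correctly against the normalization; verifying that $S$ being Hermitian is what makes $a^*S = (Sa)^*$; and confirming the index shift $p - n - 1$ in $X_{p-n-1}$ matches the exponent $p - n + 1$ appearing after integrating out $n-1$ complex coordinates, together with the extra $+1$ lost when passing from the complex variable $v_1$ to the real variable $x = |v_1|^2$. Assembling these gives the claimed identity in distribution.
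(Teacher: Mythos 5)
Your proof is correct and follows essentially the same route as the paper's: the substitution $u = Sv$, a unitary change of variables reducing $|a^*Sv|^2 = v^*(Saa^*S)v$ to $\|Sa\|^2 |v_1|^2$, and integration of the remaining $n-1$ coordinates to identify the marginal law of $|v_1|^2$ as $X_{p-n-1}$. Your index bookkeeping (exponent $p-n+1$ after integrating out $n-1$ complex coordinates, matching $m+2$ with $m = p-n-1$ in (\ref{def_Xm})) checks out.
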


\begin{proof}
By a direct change of variable, it is clear that $u \disteq S \bV_p^{n}$. We note that $|a^*Sv|^2 = v^* A v$ where $A=Saa^*S$ is a Hermitian matrix of rank one. A unitary change of variable brings it to the form $\la_1(A) v_1^2$ with $\la_1(A) = \tr A = a^*S^2a = \|Sa \|^2$. Successive integration of the other coordinates $v_2, \dots, v_n$ yields the result. \end{proof}

\newpage
\section{Truncated unitary ensemble}\label{tue_section}

This section contains the proof of all claims concerning the truncated unitary ensembles $\TUE(N,M)$ when $N \leq M$. Almost every step in this study is analogous to what was done in the spherical case; we therefore refer constantly to the corresponding parts of Section \ref{spherical_section}. 

\subsection{Schur form and eigenvalues}

As in Section \ref{spherical_section}, we first present a few general results in order to illustrate the method, as well as a few tools and definitions that are specific to the truncated unitary case. We first recall that the Schur transfom $T$ is distributed with density proportional to 

\beq\label{tue_Schur_density}
\prod_{i<j} | \la_i - \la_j |^2
\det(I_N - T T^*)^{M-N} \mathds{1}_{T T^* < 1}
\eeq
with respect to the Lebesgue measure on all complex matrix elements, diagonal ($\dd \La = \dd \la_1 \cdots \dd \la_N$) and upper-triangular ($\dd u_2 \cdots \dd u_n$).  \medskip

Provided $TT^* <1$ (which implies the same condition on every submatrix $T_n$), we introduce the Hermitian, definite-positive matrices
\beq\label{tue_def_HnSn}
H_n := I_n-T_n T_n^*,
\qquad
S_{n-1} := (1-|\la_n|^2)^{1/2} H_{n-1}^{1/2}.
\eeq
Note that the only differences with the matrices $H_n, S_{n-1}$ used in the spherical case are the minus sign and the condition on the eigenvalues of $T T^*$.

\begin{lemma}\label{tue_FK_trick} The determinant of $H_n = I_n-T_n T_n^*$ can be reccursively decomposed as
\begin{align}
\det(H_n)
= &
(1-|\la_n|^2)
\det(H_{n-1})
\left( 1 - \frac{1}{1-|\la_n|^2} u_n^* H_{n-1}^{-1} u_n \right).
\end{align}
\end{lemma}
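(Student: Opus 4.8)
This lemma is the truncated-unitary analogue of Lemma \ref{FK_trick}, and the plan is to mimic that proof exactly, carrying the minus sign through. First I would write $\det(H_n)$ as the determinant of the block matrix
$$
\det(H_n) = \left|
\begin{array}{cc}
I_{n-1} - T_{n-1}T_{n-1}^* - u_n u_n^* & -\overline{\la_n}\, u_n \\
-\la_n\, u_n^* & 1 - |\la_n|^2
\end{array}
\right|,
$$
which comes directly from expanding $T_n T_n^*$ in terms of $T_{n-1}$, $u_n$, and $\la_n$ according to the nested structure $T_n = \left( \begin{smallmatrix} T_{n-1} & u_n \\ 0 & \la_n \end{smallmatrix} \right)$ recalled in the Notations subsection. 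Then, exactly as before, elementary column operations (adding $\frac{\overline{\la_n}}{1-|\la_n|^2}$ times the last column, times the appropriate scalar, to the first $n-1$ columns) clear the off-diagonal blocks and bring the matrix to block upper-triangular form, giving
$$
\det(H_n) = (1-|\la_n|^2)\, \det\!\left( I_{n-1} - T_{n-1}T_{n-1}^* - u_n u_n^* + \frac{1}{1-|\la_n|^2} u_n u_n^* \right).
$$
The coefficient of $u_n u_n^*$ here is $-1 + \frac{1}{1-|\la_n|^2} = \frac{|\la_n|^2}{1-|\la_n|^2}$; I would then pull out $\det(H_{n-1})$ and use $\det(X) = \det(H_{n-1})\det(H_{n-1}^{-1}X)$ to write the remaining factor as $\det\!\left( I_{n-1} + \frac{|\la_n|^2}{1-|\la_n|^2} u_n u_n^* H_{n-1}^{-1} \right)$, which by Sylvester's identity $\det(I+AB)=\det(I+BA)$ equals $1 + \frac{|\la_n|^2}{1-|\la_n|^2} u_n^* H_{n-1}^{-1} u_n$.

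A short bookkeeping step then reconciles this with the stated form: one checks the algebraic identity
$$
(1-|\la_n|^2)\left(1 + \frac{|\la_n|^2}{1-|\la_n|^2} u_n^* H_{n-1}^{-1} u_n\right)
\;\ne\;
(1-|\la_n|^2)\left(1 - \frac{1}{1-|\la_n|^2} u_n^* H_{n-1}^{-1} u_n\right)
$$
in general, so I would instead be more careful with the column operations: since the relevant block is $I_{n-1} - T_{n-1}T_{n-1}^* - u_n u_n^*$ and the Schur-complement correction adds back $\frac{|\overline{\la_n}|^2}{1-|\la_n|^2} u_n u_n^* = \frac{|\la_n|^2}{1-|\la_n|^2} u_n u_n^*$, the net coefficient is $-1 + \frac{|\la_n|^2}{1-|\la_n|^2} = \frac{-(1-2|\la_n|^2)}{1-|\la_n|^2}$, and I would recompute to land on exactly $-\frac{1}{1-|\la_n|^2}$; the cleanest route is to factor $u_n u_n^*$ out of the original block \emph{before} forming the Schur complement, i.e. write $I_{n-1}-T_{n-1}T_{n-1}^* - u_nu_n^* = H_{n-1} - u_nu_n^*$ first, so the block determinant after clearing is $(1-|\la_n|^2)\det(H_{n-1} - u_nu_n^* + \frac{1}{1-|\la_n|^2}u_nu_n^*)$ — but one sees the $-u_nu_n^*$ term should not be there if $T_nT_n^*$ contributes $u_nu_n^*$ with a plus; tracking the sign from $H_n = I_n - T_nT_n^*$ gives $(H_n)_{11\text{-block}} = I_{n-1} - T_{n-1}T_{n-1}^* - u_nu_n^* = H_{n-1} - u_nu_n^*$, and the Schur complement w.r.t. the $(1-|\la_n|^2)$ corner adds $+\frac{1}{1-|\la_n|^2}u_nu_n^*$, yielding $H_{n-1} + \left(\frac{1}{1-|\la_n|^2}-1\right)u_nu_n^* = H_{n-1} + \frac{|\la_n|^2}{1-|\la_n|^2}u_nu_n^*$. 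Applying $\det(H_{n-1})\cdot\det(I+\frac{|\la_n|^2}{1-|\la_n|^2}H_{n-1}^{-1}u_nu_n^*)$ and Sylvester gives $(1-|\la_n|^2)\det(H_{n-1})\left(1+\frac{|\la_n|^2}{1-|\la_n|^2}u_n^*H_{n-1}^{-1}u_n\right)$. To match the stated $\left(1 - \frac{1}{1-|\la_n|^2}u_n^*H_{n-1}^{-1}u_n\right)$ one would need $|\la_n|^2 = -1$, so I would present the conclusion in the corrected form $\det(H_n) = (1-|\la_n|^2)\det(H_{n-1})\left(1 - \frac{u_n^* H_{n-1}^{-1} u_n}{1-|\la_n|^2}\right)$ only after re-deriving; the safest final statement to prove is whichever of these the subsequent change-of-variable $u_n = S_{n-1}v_n$ requires, namely that the factor equals $1 - v_n^* v_n$ up to the constraint $T_nT_n^* < 1$, consistent with the density $\det(I_N - TT^*)^{M-N}\mathds{1}_{TT^*<1}$.

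The main obstacle is therefore not conceptual but a careful sign audit: I must make sure the rank-one correction enters with the right sign so that the constraint $u_n^* H_{n-1}^{-1} u_n < 1-|\la_n|^2$ (equivalently $v_n^* v_n < 1$ after scaling by $S_{n-1}$) is the correct descendant of $T_n T_n^* < 1$, which is what makes the analogue of Lemma \ref{sph_key_prop} go through with the truncated-ball densities replacing the Cauchy-type densities $\bV_p^{(n)}$. Once the identity is pinned down, the proof is three lines: block determinant, column clearing to triangularize, Sylvester. I would write it in precisely the same format as the proof of Lemma \ref{FK_trick}, only flagging the $1-|\la_n|^2$ factor, the positivity constraint, and the fact that the elementary operations are legitimate because $1-|\la_n|^2 > 0$ on the support.
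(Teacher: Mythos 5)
Your outline (block determinant, Schur complement of the scalar corner, Sylvester) is exactly the paper's route, but the execution contains a sign error in the Schur complement that derails the whole proposal. Writing
$$
H_n=\left(\begin{array}{cc} A & B\\ C& D\end{array}\right)
\quad\text{with}\quad
A=H_{n-1}-u_nu_n^*,\; B=-\overline{\la_n}u_n,\; C=-\la_n u_n^*,\; D=1-|\la_n|^2,
$$
the block-triangularization gives $\det(H_n)=D\,\det\bigl(A-BD^{-1}C\bigr)$, and here
$$
BD^{-1}C=\frac{(-\overline{\la_n}u_n)(-\la_n u_n^*)}{1-|\la_n|^2}
=\frac{|\la_n|^2}{1-|\la_n|^2}\,u_nu_n^*,
$$
which is \emph{subtracted}. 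The net rank-one coefficient is therefore $-1-\frac{|\la_n|^2}{1-|\la_n|^2}=-\frac{1}{1-|\la_n|^2}$, so $A-BD^{-1}C=H_{n-1}-\frac{1}{1-|\la_n|^2}u_nu_n^*$, and Sylvester yields exactly the stated factor $1-\frac{1}{1-|\la_n|^2}u_n^*H_{n-1}^{-1}u_n$. The lemma as printed is correct. Your mistake is to declare that the Schur complement ``adds $+\frac{1}{1-|\la_n|^2}u_nu_n^*$'': you transplanted the \emph{net} coefficient $+\frac{1}{1+|\la_n|^2}$ appearing in the spherical proof (where it is the sum of $+u_nu_n^*$ from the $A$-block and $-\frac{|\la_n|^2}{1+|\la_n|^2}u_nu_n^*$ from the complement) instead of recomputing $-BD^{-1}C$ for the new signs. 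This leads you to the coefficient $+\frac{|\la_n|^2}{1-|\la_n|^2}$, to the false assertion that the stated identity would require $|\la_n|^2=-1$, and finally to a closing paragraph that oscillates between three candidate identities without committing to or proving any of them. As it stands, no proof is delivered.

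A useful sanity check you half-invoke but do not carry out: on the support $T_nT_n^*<1$ one has $\det(H_n)>0$, which with the correct sign is equivalent to $u_n^*H_{n-1}^{-1}u_n<1-|\la_n|^2$, i.e.\ $v_n^*v_n<1$ after the substitution $u_n=S_{n-1}v_n$ with $S_{n-1}^2=(1-|\la_n|^2)H_{n-1}$. This is precisely what produces the ball-supported densities $\bW_p^{(n)}$ in Lemma \ref{tue_key_prop}; your version, with the factor $1+\frac{|\la_n|^2}{1-|\la_n|^2}u_n^*H_{n-1}^{-1}u_n$, imposes no such constraint and could not reproduce (\ref{def_Wpn}). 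Fixing the single sign in $-BD^{-1}C$ turns your argument into the intended three-line proof.
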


The proof is analogous to the proof of Lemma \ref{FK_trick}.

For any $p \geq 0$, we denote by $\bW_p^{(n)}$ a random vector with density
\beq\label{def_Wpn}
\frac{1}{C_{n,p}}
{(1-v^* v)^p}
\mathds{1}_{v^* v < 1}
\eeq
with respect to the Lebesgue measure on $\C^n$; the value of $C_{n,p}$ is given by (\ref{tue_ct0}). For any $m \geq 2$, we denote by $Y_m$ a real random variable with density
\beq\label{def_Ym}
(m-1) (1-y)^{m-2} \mathds{1}_{(0,1)}
\eeq
with respect to the Lebesgue measure, i.e. it follows a $\beta_{1,m-1}$ distribution; in particular $\E Y_m = \frac{1}{m}$. If $w_i$ is a coordinate of $\bW_p^{(n)}$, it follows from Lemma \ref{tue_distributions} that
$$
|w_i|^2 \disteq Y_{p+n+1}.
$$
Note that the i.i.d. variables that appear in Theorem \ref{tue_ov11} follow the above distribution with $m=M$.

\begin{lemma}\label{tue_key_prop}
Identity holds between the following expressions, for $p \geq n$ and $f, g$ integrable functions of the matrix elements: 
\begin{align*}
& \int {f(T_{n-1}, \la_n) g (u_{n})} {\det ( H_n )^{p}} \mathds{1}_{T_n T_n^* < 1}
 \dd T_n \\
 =& 
C_{n-1,p}
\int {f(T_{n-1}, \la_n) \E \left( g(S_{n-1} \bW_p^{(n-1)}) \right) }
{(1-|\la_n|^2)^{p+n-1} \det ( H_{n-1} )^{p+1}}
\mathds{1}_{T_{n-1} T_{n-1}^* < 1}
\dd T_{n-1} \dd \lambda_{n},
\end{align*}
where
$ H_{n}, S_{n-1}, \bW_p^{(n)} $
are defined in (\ref{def_HnSn}) and (\ref{def_Vpn}).
\end{lemma}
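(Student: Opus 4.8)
The plan is to reproduce the proof of Lemma~\ref{sph_key_prop} almost verbatim, the reciprocal power $\det(H_n)^{-p}$ being replaced by the positive power $\det(H_n)^{p}$ and an extra support constraint $T_nT_n^*<1$ being carried along at every step. First I would invoke Lemma~\ref{tue_FK_trick} to write, on the set $\{T_nT_n^*<1\}$,
\[
\det(H_n)^p=(1-|\la_n|^2)^p\det(H_{n-1})^p\left(1-\frac{1}{1-|\la_n|^2}\,u_n^*H_{n-1}^{-1}u_n\right)^{p}.
\]
On this set $H_{n-1}>0$, since the leading $(n-1)\times(n-1)$ block $I_{n-1}-T_{n-1}T_{n-1}^*-u_nu_n^*$ of $H_n$ is positive definite; hence $S_{n-1}=(1-|\la_n|^2)^{1/2}H_{n-1}^{1/2}$ from (\ref{tue_def_HnSn}) is well defined there.

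Next I would perform the change of variables $u_n=S_{n-1}v_n$ in $\C^{n-1}$. Its real Jacobian is $|\det S_{n-1}|^2=\det(S_{n-1}^2)=(1-|\la_n|^2)^{n-1}\det(H_{n-1})$, and since $S_{n-1}H_{n-1}^{-1}S_{n-1}=(1-|\la_n|^2)I_{n-1}$ one gets $u_n^*H_{n-1}^{-1}u_n=(1-|\la_n|^2)\,v_n^*v_n$, so the last factor above collapses to $(1-v_n^*v_n)^p$. For the indicator, writing $H_n$ in block form and taking the Schur complement with respect to its bottom-right entry $1-|\la_n|^2>0$, the condition $H_n>0$ is equivalent to $|\la_n|<1$ together with $H_{n-1}-(1-|\la_n|^2)^{-1}u_nu_n^*>0$; after the substitution $u_n=S_{n-1}v_n$ and conjugation by $H_{n-1}^{-1/2}$ this last inequality is exactly $I_{n-1}-v_nv_n^*>0$, i.e. $v_n^*v_n<1$. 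Thus $\mathds{1}_{T_nT_n^*<1}=\mathds{1}_{T_{n-1}T_{n-1}^*<1}\,\mathds{1}_{|\la_n|<1}\,\mathds{1}_{v_n^*v_n<1}$ after the change of variables.

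Collecting the factors, namely $(1-|\la_n|^2)^{p}(1-|\la_n|^2)^{n-1}=(1-|\la_n|^2)^{p+n-1}$ and $\det(H_{n-1})^{p}\det(H_{n-1})=\det(H_{n-1})^{p+1}$, the left-hand side becomes
\[
\int f(T_{n-1},\la_n)\,(1-|\la_n|^2)^{p+n-1}\det(H_{n-1})^{p+1}\,\mathds{1}_{T_{n-1}T_{n-1}^*<1}\left(\int_{\C^{n-1}}g(S_{n-1}v_n)(1-v_n^*v_n)^p\,\mathds{1}_{v_n^*v_n<1}\,\dd v_n\right)\dd T_{n-1}\,\dd\la_n,
\]
and the inner integral equals $C_{n-1,p}\,\E\left(g(S_{n-1}\bW_p^{(n-1)})\right)$ by the very definition (\ref{def_Wpn}) of the random vector $\bW_p^{(n-1)}$; pulling $C_{n-1,p}$ in front yields the asserted identity.

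The one point worth spelling out in detail — and the only genuine departure from the spherical computation — is the Schur-complement verification that the constraint $T_nT_n^*<1$ decouples as $\{T_{n-1}T_{n-1}^*<1\}\cap\{|\la_n|<1\}\cap\{v_n^*v_n<1\}$; I expect that to be the main (mild) obstacle, everything else being a transcription of the proof of Lemma~\ref{sph_key_prop} together with the elementary normalization constant $C_{n-1,p}$.
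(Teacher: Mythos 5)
Your proof is correct and follows exactly the route the paper intends: the paper omits the proof of Lemma~\ref{tue_key_prop} entirely (leaving it as the analogue of Lemma~\ref{sph_key_prop}), and your argument is precisely that transcription --- Lemma~\ref{tue_FK_trick}, the change of variables $u_n = S_{n-1}v_n$ with Jacobian $|\det S_{n-1}|^2 = (1-|\la_n|^2)^{n-1}\det(H_{n-1})$, and the definition of $\bW_p^{(n-1)}$. The Schur-complement verification that the indicator factors as $\mathds{1}_{T_{n-1}T_{n-1}^*<1}\,\mathds{1}_{|\la_n|<1}\,\mathds{1}_{v_n^*v_n<1}$ is the one detail the paper leaves implicit, and you carry it out correctly.
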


We deduce from the above Lemma the distribution of every top-left submatrix of the Schur form, analogously to Proposition \ref{sph_subschur}.

\begin{proposition}\label{tue_subschur}
Conditionally on $\La$ and for $2 \leq n \leq N$, the submatrix $T_n$ of the Schur transform is distributed with density proportional to
\beq
\det(I_n - T_n T_n^*)^{M-n} \mathds{1}_{T_nT_n^* \leq 1}.
\eeq
with respect to the Lebesgue measure on upper-triangular matrix elements ($\dd u_2 \cdots \dd u_n$).
\end{proposition}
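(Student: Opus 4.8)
The plan is to imitate the proof of Proposition \ref{sph_subschur}, which is the exact analog in the spherical case, replacing Lemma \ref{sph_key_prop} by its truncated-unitary counterpart Lemma \ref{tue_key_prop}. The statement for $n=N$ is precisely the known density (\ref{tue_Schur_density}) of the Schur form of $\TUE(N,M)$, conditionally on $\La$: indeed, up to the eigenvalue-dependent prefactor $\prod_{i<j}|\la_i-\la_j|^2$, which is constant once we condition on $\La$, the conditional density is proportional to $\det(I_N - TT^*)^{M-N}\mathds{1}_{TT^*<1}$, matching the claimed formula with $n=N$. So the base case of the recursion is immediate.

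From there I would run a backward induction on $n$, decreasing from $N$ down to $2$. Suppose the claim holds for some $n$ with $2 \leq n \leq N$, i.e. conditionally on $\La$ the matrix $T_n$ has density proportional to $\det(I_n - T_n T_n^*)^{M-n}\mathds{1}_{T_nT_n^*<1}$ on the variables $u_2,\dots,u_n$. To obtain the density of $T_{n-1}$ one integrates out $u_n$. Apply Lemma \ref{tue_key_prop} with $p = M-n$, with $g \equiv 1$ and with a generic integrable test function $f$ of $(T_{n-1},\la_n)$: the left-hand side is (a constant times) $\E_\La\big(f(T_{n-1},\la_n)\big)$ against the density for $T_n$, and the right-hand side becomes a constant times
\[
\int f(T_{n-1},\la_n)\,\frac{\det(H_{n-1})^{p+1}}{(1-|\la_n|^2)^{p+n-1}}\,\mathds{1}_{T_{n-1}T_{n-1}^*<1}\,\dd T_{n-1}\,\dd\la_n,
\]
since $\E(g(S_{n-1}\bW_p^{(n-1)})) = 1$ when $g\equiv 1$. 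Now $p+1 = M-n+1 = M-(n-1)$, so the power of $\det(H_{n-1}) = \det(I_{n-1}-T_{n-1}T_{n-1}^*)$ appearing is exactly $M-(n-1)$, which is the claimed exponent at level $n-1$. The remaining factor $(1-|\la_n|^2)^{-(p+n-1)}$ depends only on $\la_n$, hence is constant once we condition on $\La$, and is absorbed into the normalization; integrating over $\la_n$ (or rather, since we are conditioning on $\La$, treating $\la_n$ as fixed) leaves the density of $T_{n-1}$ proportional to $\det(I_{n-1}-T_{n-1}T_{n-1}^*)^{M-(n-1)}\mathds{1}_{T_{n-1}T_{n-1}^*<1}$, which is the claim for $n-1$. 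This closes the induction as long as $n-1 \geq 2$, so the statement holds for all $2 \leq n \leq N$.

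There is no serious obstacle here; the content is entirely in Lemma \ref{tue_key_prop} (itself the analog of Lemma \ref{sph_key_prop}), and the proof is a transcription of the argument for Proposition \ref{sph_subschur}. The one point that deserves a line of care is the bookkeeping of exponents: one must check that the shift $p \mapsto p+1$ produced by one application of Lemma \ref{tue_key_prop} is compatible with the required decrement $n \mapsto n-1$ in the exponent $M-n$, which it is precisely because $M-n+1 = M-(n-1)$. One should also note, as the paper already remarks after (\ref{tue_def_HnSn}), that $T_NT_N^* < 1$ automatically forces $T_nT_n^* < 1$ for every submatrix, so the indicator constraints are consistent all the way down the recursion and no measure is lost. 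Given the running convention that proofs identical to the spherical case are only sketched, I would simply write ``The proof is analogous to that of Proposition \ref{sph_subschur}, using Lemma \ref{tue_key_prop} with $p = M-n$ and $g \equiv 1$,'' and leave the exponent check as the one explicit remark.
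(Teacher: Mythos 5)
Your proposal is correct and is essentially the paper's own argument: the paper proves Proposition \ref{tue_subschur} by exactly the same backward recurrence from the known case $n=N$, invoking Lemma \ref{tue_key_prop} with $g=1$ and generic $f$ in complete analogy with Proposition \ref{sph_subschur}. Your explicit check that the exponent shift $p\mapsto p+1$ matches $M-n\mapsto M-(n-1)$, and the remark about the nested indicator constraints, are exactly the details the paper leaves implicit.
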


We also derive the joint eigenvalue density of the truncated unitary ensemble from the density of its Schur form, as was done in \cite{ForresterKrishnapur}. The result itself was first proven in \cite{Sommers}.

\begin{theorem}[\.Zyczkowski \& Sommers]\label{tue_joint_density}
The joint density of eigenvalues for the truncated unitary ensemble when $M \geq N$ is proportional to
\beq\label{tue_eigdensity}
\frac{1}{Z_{M,N}}
\prod_{1 \leq i<j \leq N} |\la_i - z_j|^2
\prod_{i=1}^N (1-|\la_i|^2)^{M-1}
\mathds{1}_{\D}(\la_i)
\eeq
with respect to the Lebesgue measure on $\C^N$.
\end{theorem}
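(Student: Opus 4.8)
The plan is to follow the proof of Theorem~\ref{sph_joint_density} line by line, replacing the spherical ingredients by their truncated-unitary counterparts. One starts from the explicit density (\ref{tue_Schur_density}) of the Schur form $T$ and integrates out the super-diagonal vectors $u_N, u_{N-1}, \dots, u_2$ successively, using at each stage Lemma~\ref{tue_key_prop} (which rests on the recursive determinant identity of Lemma~\ref{tue_FK_trick}) with $g \equiv 1$ and with $f$ a generic function of the remaining variables.

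Concretely, fix a bounded continuous function $h$ of the spectrum, and let $f$ carry the Vandermonde factor $\prod_{i<j}|\la_i-\la_j|^2$, the function $h$, and --- as the recursion proceeds --- the accumulated powers of $(1-|\la_k|^2)$ together with the still-pending integrations over the $\la_k$. Applying Lemma~\ref{tue_key_prop} first with $n=N$, $p=M-N$ integrates out $u_N$: the exponent of the determinant rises from $M-N$ to $M-(N-1)$, a constant $C_{N-1,M-N}$ and a factor $(1-|\la_N|^2)^{(M-N)+(N-1)}=(1-|\la_N|^2)^{M-1}$ are produced, and (after the change of variable $u_N=S_{N-1}v_N$) the domain $\{T_NT_N^*<1\}$ splits as $\{T_{N-1}T_{N-1}^*<1\}\cap\{|\la_N|<1\}\cap\{v_N^*v_N<1\}$. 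Iterating, the generic step $n\to n-1$ turns the determinant power $M-n$ into $M-(n-1)$, contributes $C_{n-1,M-n}$ and $(1-|\la_n|^2)^{(M-n)+(n-1)}=(1-|\la_n|^2)^{M-1}$, and carries the constraint down to the submatrix while extracting $|\la_n|<1$. After the final step $n=2$ one is left with $T_1=(\la_1)$, a leftover power $\det(I_1-T_1T_1^*)^{M-1}=(1-|\la_1|^2)^{M-1}$, and the constraint $|\la_1|<1$, whence
\beq
\E(h(\La)) = C\int \prod_{i<j}|\la_i-\la_j|^2 \ \prod_{i=1}^N(1-|\la_i|^2)^{M-1}\,\mathds{1}_{\D}(\la_i)\ h(\La)\,\dd\La,
\eeq
with $C$ the product of the normalizing constant of (\ref{tue_Schur_density}) and $C_{N-1,M-N}C_{N-2,M-N+1}\cdots C_{1,M-2}$; this is the claim, with $Z_{M,N}=1/C$. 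The hypothesis $M\geq N$ enters only to guarantee that every exponent $M-n$, $2\leq n\leq N$, is nonnegative, so that all the constants $C_{n-1,M-n}$ of Lemma~\ref{tue_key_prop} are finite and the integrals converge.

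Given Lemmas~\ref{tue_FK_trick} and~\ref{tue_key_prop}, the computation is otherwise routine bookkeeping; the one point deserving care is the propagation of the indicator function. One must check that $T_nT_n^*<1$ holds exactly when $T_{n-1}T_{n-1}^*<1$, $|\la_n|<1$, and $u_n^*H_{n-1}^{-1}u_n<1-|\la_n|^2$ --- the last condition being precisely $v_n^*v_n<1$ once $u_n=S_{n-1}v_n$, which is the reason $S_{n-1}$ was normalized as in (\ref{tue_def_HnSn}). This follows from the factorization of Lemma~\ref{tue_FK_trick}, since $H_{n-1}>0$ and $1-|\la_n|^2>0$ force the sign of $\det H_n$ to be that of the remaining factor. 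Consequently no constraint is lost or gained along the recursion, and the indicator accumulated at the end is exactly $\prod_{i=1}^N\mathds{1}_{\D}(\la_i)$, as claimed.
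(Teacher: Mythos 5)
Your proof is correct and is exactly the argument the paper intends: it carries out, step by step, the recursion of Theorem~\ref{sph_joint_density} with Lemma~\ref{tue_key_prop} in place of Lemma~\ref{sph_key_prop}, and your bookkeeping of the exponents $(1-|\la_n|^2)^{(M-n)+(n-1)}=(1-|\la_n|^2)^{M-1}$ and of the propagation of the indicator $\mathds{1}_{T_nT_n^*<1}$ is accurate. The paper itself gives no details here, stating only that the proof is analogous to the spherical case, so your write-up supplies precisely the omitted verification.
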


The proof is analogous to the one of Theorem \ref{sph_joint_density}. \medskip

Theorem \ref{tue_joint_density} can be rephrased by saying that the eigenvalues of $\TUE(N,M)$ are distributed according to (\ref{density_general_V}) with potential $V(t)= -(M-1) \ln (1-t) \mathds{1}_{(0,1)}$. A straightforward computation shows that
\beq
\ga_V(\al) \disteq \beta_{\al, M}.
\eeq

Thus, Kostlan's theorem in that case asserts that the set of squared radii is distributed as a set of independent $\beta$ variables. Namely,
$$
\{ |\lambda_1|^2, \dots, |\lambda_k|^2 \}
\stackrel{d}{=}
\{ \beta_{1,M}, \dots, \beta_{k,M} \}.
$$

%The normalizing constant is
%$$
%Z_{N,k} = \pi^k k! \prod_{i=1}^k \beta(i,N-i).
%$$

\subsection{Distribution and conditional expectation of overlaps}

\begin{theorem}\label{tue_ov11} 
Conditionally on $\{ \La = (\la_1, \dots, \la_N) \}$, diagonal overlaps in the truncated unitary ensemble $\TUE(N,M)$ are distributed as
\beq
\Ov_{1,1} \disteq \prod_{k=2}^N \left( 1+  \frac{(1 -
|\la_1|^2)(1 - |\la_k|^2)}{|\la_1 - \la_k|^2} Y_{M}^{(k)} \right)
\eeq
where the $ Y_{M}^{(k)}$ are i.i.d. distributed according to (\ref{def_Ym}) with $m=M$. In particular, the quenched expectation is given by the formula
\beq
\E_{\La} \left( \Ov_{1,1} \right) = \prod_{k=2}^N \left( 1+  \frac{(1 -
|\la_1|^2)(1 - |\la_k|^2)}{M |\la_1 - \la_k|^2}\right)
\eeq
\end{theorem}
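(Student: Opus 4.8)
The plan is to mirror the proof of Theorem~\ref{spherical_ov11} exactly, replacing the spherical ingredients by their truncated unitary analogues. As in the spherical case, I would introduce the partial sums $\Ov_{1,1}^{(d)} := \sum_{i=1}^d |b_i|^2$, note that $\Ov_{1,1}^{(1)}=1$, and use the recurrence from Section~\ref{overlaps_subsec} to write
$$
\Ov_{1,1}^{(d+1)} = \Ov_{1,1}^{(d)} \left( 1 + \frac{1}{|\la_1 - \la_{d+1}|^2} \frac{|B_d u_{d+1}|^2}{\|B_d\|^2} \right).
$$
The task is then to identify the conditional distribution of $|B_d u_{d+1}|^2$ given $\mathscr{F}_d$ and $\La$. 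For this I would invoke Proposition~\ref{tue_subschur} to get that $T_{d+1}$ has density proportional to $\det(I_{d+1}-T_{d+1}T_{d+1}^*)^{M-d-1}\mathds{1}_{T_{d+1}T_{d+1}^*<1}$, so that the relevant exponent is $p = M-d-1$; then Lemma~\ref{tue_key_prop} with $n=d+1$, generic $f$, and $g(u_{d+1}) := h(|B_d u_{d+1}|^2)$ shows that $|B_d u_{d+1}|^2 \disteq |B_d S_d \bW_{M-d-1}^{(d)}|^2$ and is independent of $\mathscr{F}_d$. Finally Lemma~\ref{tue_distributions} (the analogue of Lemma~\ref{spherical_distributions}) with $a = b = B_d^*$ and $S = S_d$ gives
$$
|B_d u_{d+1}|^2 \disteq \|S_d B_d^*\|^2\, Y_{M} = (1-|\la_{d+1}|^2)\,\big(B_d(I_d - T_d T_d^*)B_d^*\big)\, Y_M,
$$
where $Y_M$ has the distribution (\ref{def_Ym}) and is independent of $\mathscr{F}_d$; here the crucial point is that the parameter of the $Y$-variable works out to exactly $M$ (from $p+n+1 = (M-d-1)+(d)+1$ applied with the appropriate shift), independently of $d$.

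The remaining algebraic step is identical to the spherical case: since $T$ is triangular and $B_d$ is the truncation of the left eigenvector $L_1$, we have $B_d T_d = \la_1 B_d$, hence $B_d(I_d - T_d T_d^*)B_d^* = (1-|\la_1|^2)\|B_d\|^2$. Substituting back gives
$$
\Ov_{1,1}^{(d+1)} \disteq \Ov_{1,1}^{(d)} \left( 1 + \frac{(1-|\la_1|^2)(1-|\la_{d+1}|^2)}{|\la_1 - \la_{d+1}|^2}\, Y_M \right),
$$
with $Y_M$ independent of $\mathscr{F}_d$; relabelling this variable $Y_M^{(d+1)}$ and iterating from $d=1$ to $N-1$ yields the claimed product representation of $\Ov_{1,1} = \Ov_{1,1}^{(N)}$ as a product of independent factors. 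The quenched expectation then follows immediately by taking expectations factor by factor, using $\E Y_M = 1/M$, which produces $\E_\La(\Ov_{1,1}) = \prod_{k=2}^N \left(1 + \frac{(1-|\la_1|^2)(1-|\la_k|^2)}{M|\la_1-\la_k|^2}\right)$.

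I expect no serious obstacle here, since the argument is structurally a carbon copy of Theorem~\ref{spherical_ov11}. The one point that requires care is bookkeeping of the exponent shifts between the spherical and truncated conventions: in the spherical case Lemma~\ref{sph_key_prop} lowers $p$ by one and the submatrix exponent is $N+n$, whereas in the truncated case Lemma~\ref{tue_key_prop} raises the power of $\det H$ and the submatrix exponent is $M-n$, with the density (\ref{def_Wpn}) carrying a \emph{positive} power $(1-v^*v)^p$ rather than a negative one. One must check that, with these conventions, Lemma~\ref{tue_distributions} indeed gives $|a^* u|^2 \disteq \|Sa\|^2 Y_{p+n+1}$ and that evaluating at $p = M-d-1$, $n = d$ produces the index $M$ uniformly in $d$; this is the only place where a sign or off-by-one error could creep in, and it is precisely the analogue of the remark following Lemma~\ref{spherical_constants} that the extra factor is $\tfrac{1}{M}$ (rather than $\tfrac1N$) at every step.
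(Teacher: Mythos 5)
Your proposal is correct and follows essentially the same route as the paper: partial sums $\Ov_{1,1}^{(d)}$, Proposition~\ref{tue_subschur} to fix the exponent, Lemma~\ref{tue_key_prop} for conditional independence, Lemma~\ref{tue_distributions} to identify $|B_d u_{d+1}|^2 \disteq \|S_d B_d^*\|^2 Y_M$, and the identity $B_d T_d = \la_1 B_d$ to close the recursion. Your index bookkeeping ($p = M-d-1$, $n=d$, hence $Y_{p+n+1}=Y_M$ uniformly in $d$) is exactly the point the paper relies on, and is carried out correctly.
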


\begin{proof} It is similar to the one of Theorem \ref{spherical_ov11}; we sketch it again to see where the differences lie. We first write
$$
\Ov_{1,1}^{(d+1)} = \Ov_{1,1}^{(d)} + |b_{d+1}|^2 = \Ov_{1,1}^{(d)} \left(1 + \frac{1}{|\la_1 - \la_{d+1}|^2} \frac{ | B_d u_{d+1}  |^2 }{\| B_d \|^2} \right) 
$$
In order to characterize the distribution of this factor, we use Proposition \ref{tue_subschur}, then Lemma \ref{tue_key_prop} and Lemma \ref{tue_distributions} with $a=b= \overline{B_d}$ and $S=S_{d+1}$ such that $S_{d+1}^2 = (1-|\la_{d+1}|^2)(I_{d} - T_{d} T_{d}^*)$. This yields
\beq
| B_d u_{d+1} |^2 \disteq (1-|\la_{d+1}|^2) \| (I_d - T_d T_{d}^*) \overline{B_d} \|^2 Y_{N}
\eeq
where $Y_{N}$ is distributed according to (\ref{def_Ym}) with $m=M$, and independent of $\mathscr{F}_d$; we denote this variable by $Y_{N}^{(d+1)}$ to avoid confusion. The last steps of the proof follow accordingly.
\end{proof}

\begin{proposition}\label{tue_expectation_N}
Conditionally on $\{ \la_1=0 \}$, the expectation of the diagonal overlap $\Ov_{1,1}$ in the truncated unitary ensemble $\TUE(N,M)$ is
$$
\E_{\{ \la_1 = 0 \}} \Ov_{1,1} = N.
$$
\end{proposition}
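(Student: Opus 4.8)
The plan is to mimic the proof of Proposition~\ref{sph_expectation_N}, substituting the truncated-unitary ingredients for the spherical ones. First I would apply Theorem~\ref{tue_ov11} and specialize the product decomposition to the event $\{\la_1=0\}$: there $|\la_1|^2=0$ and $|\la_1-\la_k|^2=|\la_k|^2$, so
$$
\Ov_{1,1} \disteq \prod_{k=2}^N \left( 1 + \frac{1-|\la_k|^2}{|\la_k|^2}\, Y_M^{(k)} \right),
$$
where the $Y_M^{(k)}$ are i.i.d. $\beta_{1,M-1}$ variables (hence $\E Y_M^{(k)}=\tfrac1M$), independent of the spectrum by the construction underlying Theorem~\ref{tue_ov11}.

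Next I would take expectations. By Proposition~\ref{Kostlan_conditioned} applied with the potential $V(t)=-(M-1)\ln(1-t)\mathds{1}_{(0,1)}$ attached to $\TUE(N,M)$, the variables $|\la_k|^2$ for $2\le k\le N$ are independent with $|\la_k|^2\disteq\ga_V(k)\disteq\beta_{k,M}$; combined with the independence and identical distribution of the $Y_M^{(k)}$, all $N-1$ factors above are mutually independent, so
$$
\E_{\{\la_1=0\}}\Ov_{1,1} = \prod_{k=2}^N \left( 1 + \frac{1}{M}\,\E\!\left(\frac{1-\beta_{k,M}}{\beta_{k,M}}\right) \right).
$$
A one-line beta integral gives $\E(1/\beta_{k,M}) = \beta(k-1,M)/\beta(k,M) = (k+M-1)/(k-1)$, which is finite since $k\ge 2$; hence $\E\big((1-\beta_{k,M})/\beta_{k,M}\big)=M/(k-1)$ and each factor equals $1+\tfrac1{k-1}=\tfrac{k}{k-1}$. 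The telescoping product then equals $N$, as claimed.

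There is no genuine obstacle beyond bookkeeping: the argument is word-for-word the spherical one with $\beta_{k,M}$ in place of the variable $\ga_V(k)$ of (\ref{spherical_gamma}). The two points deserving a sentence of care are the independence of the $Y_M^{(k)}$ from $\La$ (needed both to factorize the expectation and to pull out $\E Y_M^{(k)}=\tfrac1M$) and the finiteness of $\E(1/\beta_{k,M})$, which is exactly why the index starts at $k=2$; the standing hypothesis $N\le M$ ensures all the beta laws involved are well defined. Should one wish to avoid Kostlan, $\E_{\{\la_1=0\}}\big((1-|\la_k|^2)/|\la_k|^2\big)$ could instead be read off from the conditioned one-point function of the eigenvalue density~(\ref{tue_eigdensity}), but going through Proposition~\ref{Kostlan_conditioned} keeps the treatment parallel to Section~\ref{spherical_section}.
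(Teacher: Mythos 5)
Your proposal is correct and follows essentially the same route as the paper: Theorem \ref{tue_ov11} (equivalently its quenched-expectation corollary), Proposition \ref{Kostlan_conditioned} with $\ga_V(k)\disteq\beta_{k,M}$, the beta identity $\E(1/\beta_{k,M})=(M+k-1)/(k-1)$, and the telescoping product $\prod_{k=2}^N k/(k-1)=N$. The two points you flag for care (independence of the $Y_M^{(k)}$ from $\La$ and finiteness of $\E(1/\beta_{k,M})$ for $k\ge 2$) are exactly the ones implicit in the paper's argument.
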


Note that the same statement, which is an exact identity for any $N$, holds in the complex Ginibre ensemble and spherical ensemble respectively.

\begin{proof}
We know from Proposition \ref{Kostlan_conditioned} that the squared radii, conditionally on the event $\{ \la_1=0 \}$, are distributed like independent variables with distributions $\gamma_{V,k}$ with $V (x)= -(M-1) \log(1-x) \mathds{1}_{(0,1)}$ and $2 \leq k \leq N$. We already noticed that $\gamma_{V,k} \disteq \beta_{k,M}$. A straightforward computation follows:
$$
\E_{ \{ \la_1 = 0 \} } \Ov_{1,1}
=
\E \prod_{k=2}^N \left( 1 + \frac{1- |\la_k|^2}{M |\la_k|^2} \right)
= \prod_{k=2}^N \E \left( 1 - \frac{1}{M} + \frac{1}{M \beta_{k,M}} \right).
$$
For any $k \geq 2$,
$$
\E \left( \frac{1}{ \beta_{k,M}} \right)
= \frac{\beta (k-1, M) }{ \beta(k,M) }
= \frac{M +k - 1}{k-1},
$$
so that the expectation is given by the telescopic product
$$
\E_{\{ \la_1 = 0 \}} \Ov_{1,1}
= \prod_{k=2}^N \frac{k}{k-1}
=N
$$
as was claimed.
\end{proof}

\begin{proposition}\label{tue_gamma2lim}
Conditionally on $\{ \la_1=0 \}$, the following convergence in distribution takes place:
$$
\frac{1}{N} \Ov_{1,1} \distconv \frac{1}{\gamma_2}.
$$
\end{proposition}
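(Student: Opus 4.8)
The plan is to mirror the proof of Proposition~\ref{spherical_gamma2lim} essentially line by line, substituting the truncated unitary ingredients for the spherical ones and checking that the relevant moments are unchanged. First I would combine Theorem~\ref{tue_ov11} (taken at $\la_1=0$) with the conditioned Kostlan property of Proposition~\ref{Kostlan_conditioned}, which gives that, conditionally on $\{\la_1=0\}$, the squared radii $\{|\la_2|^2,\dots,|\la_N|^2\}$ are independent with $|\la_k|^2\disteq\gamma_V(k)\disteq\beta_{k,M}$. Hence
$$
N\,\Ov_{1,1}^{-1}\disteq N\prod_{k=2}^N F_{M,k}^{-1},\qquad
F_{M,k}:=1+\frac{1-\beta_{k,M}}{\beta_{k,M}}\,Y_M^{(k)},
$$
with all the $\beta_{k,M}$ and $Y_M^{(k)}\sim\beta_{1,M-1}$ independent. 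Since $\frac{1-\beta_{k,M}}{\beta_{k,M}}\disteq\frac{\gamma_M}{\gamma_k}$ and $MY_M\distconv\gamma_1$, for fixed $k$ the factor $F_{M,k}$ converges in distribution to $1+\gamma_1/\gamma_k$, which is precisely the factor occurring in the complex Ginibre decomposition of \cite{BourgadeDubach}.

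Second, I would split the product at a cutoff $k_N\to\infty$, writing $N\,\Ov_{1,1}^{-1}\disteq F(2,k_N)\,F(k_N+1,N)$ with $F(2,k_N)=k_N\prod_{k=2}^{k_N}F_{M,k}^{-1}$ and $F(k_N+1,N)=\frac{N}{k_N}\prod_{k=k_N+1}^{N}F_{M,k}^{-1}$. For the head $F(2,k_N)$: because $x\mapsto x^{-m}$ is smooth and bounded on $(1,\infty)$, one has $\E F_{M,k}^{-m}\to\E(1+\gamma_1/\gamma_k)^{-m}$ for each fixed $k,m$; the multiplicative version of Lemma~\ref{sequence_exists}, applied to the family of moment ratios, then furnishes a sequence $(k_n)$ satisfying (\ref{sequence_cond}) along which $\prod_{k=2}^{k_N}\bigl(\E F_{M,k}^{-m}\bigr)\big/\bigl(\E(1+\gamma_1/\gamma_k)^{-m}\bigr)\to1$ for every $m$, and comparison with the Ginibre computation ($N\prod_{k=2}^N(1+\gamma_1/\gamma_k)^{-1}\disteq N\beta_{2,N-1}\distconv\gamma_2$) yields $F(2,k_N)\distconv\gamma_2$.

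Third, for the tail $F(k_N+1,N)$ I would run the same $L^2$ estimate as in Proposition~\ref{sph_expectation_N}. Using $\E(1/\beta_{k,M})=(k+M-1)/(k-1)$ and $\E(1/\beta_{k,M}^2)=(k+M-1)(k+M-2)/\bigl((k-1)(k-2)\bigr)$ together with $\E Y_M=1/M$ and $\E Y_M^2=2/\bigl(M(M+1)\bigr)$, one finds
$$
\E F_{M,k}=\frac{k}{k-1},\qquad \E F_{M,k}^2=\frac{k}{k-2},
$$
exactly the same expressions as in the spherical case. The telescopic products then give $\E F(k_N+1,N)^{-1}=1$ and $\var F(k_N+1,N)^{-1}=\frac{N-k_N}{N(k_N-1)}\to0$ under (\ref{sequence_cond}), so $F(k_N+1,N)^{-1}\to1$ in $L^2$ and hence $F(k_N+1,N)\distconv1$. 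Slutsky's theorem combines the two factors and proves the claim.

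The only point worth watching — and the place where one might fear something new happens — is the presence of two parameters $N\le M$ both tending to infinity. This turns out to be harmless: $\E F_{M,k}$ and $\E F_{M,k}^2$ do not depend on $M$ (so the telescoping is literally the spherical one), and $M$ enters the limiting factor $1+\gamma_1/\gamma_k$ only through $MY_M\distconv\gamma_1$, which holds for any sequence $M\to\infty$. Thus the main obstacle is purely bookkeeping, already dispatched in the spherical case.
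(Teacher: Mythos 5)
Your proposal is correct and follows essentially the same route as the paper: the same head/tail decomposition at a cutoff $k_N$ furnished by Lemma \ref{sequence_exists}, the same moment comparison of each factor with the Ginibre limit $1+\gamma_1/\gamma_k$, and the same telescoping identities $\E F_{M,k}=k/(k-1)$, $\E F_{M,k}^2=k/(k-2)$ for the $L^2$ control of the tail. The only (immaterial) cosmetic difference is that you realize $(1-\beta_{k,M})/\beta_{k,M}\disteq\gamma_M/\gamma_k$ via the beta--gamma algebra where the paper writes $M\beta_{k,M}\distconv\gamma_k$ directly.
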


Note that $N \rightarrow \infty$ implies $M \rightarrow \infty$, as we study the truncated unitary ensemble in the regime where $N \leq M$. The rate at which $N,M$ go to infinity does not have any impact on the following proof (although it is expected to play a role when conditioning on a generic $z$ in the bulk).

\begin{proof} The technique is similar to the proof of Proposition \ref{spherical_gamma2lim}. We decompose the distribution obtained by Theorem \ref{tue_ov11} in two factors
\begin{align*}
N \Ov_{1,1}^{-1}
\disteq
k_N \prod_{k=2}^{k_N} \left( 1+ \frac{1 - \gamma_V(k)}{\gamma_V(k)} Y_M^{(k)} \right)^{-1}
& \times
\frac{N}{k_N} \prod_{k=k_N+1}^{N} \left( 1+ \frac{1 - \gamma_V(k)}{\gamma_V(k)} Y_M^{(k)} \right)^{-1} \\
=:
G(2,k_N) & \times G(k_N+1,N).
\end{align*}
As $\gamma_V (k) \disteq \beta_{k,M}$, we have
$$
G_{M,k} : = 1+ \frac{1 - \gamma_V(k)}{\gamma_V(k)} Y_M^{(k)}
\disteq
1+ \left(\frac{1}{\beta_{k,M}} - 1 \right) Y_M,
$$
where $Y_M$ is defined by (\ref{def_Ym}). The proof then proceeds in two separate parts.

\paragraph{Convergence of $G(2,k_N)$ to $\gamma_2$ for a suitable sequence $k_N$.}
It is straightforward to check that for every $k$, the term $G_{M,k}$ converges to the factor playing an analogous role in the complex Ginibre case. Indeed,
$$
M Y_M \distconv \gamma_1, 
\quad \text{and} \quad
M \beta_{k,M} \distconv \gamma_k,
$$
so that
$$
G_{M,k}
\disteq
1 + \left( \frac{1}{M \beta_{k,M}} - \frac{1}{M} \right) M Y_M
\distconv 
1 + \frac{\gamma_1}{\gamma_k}.
$$
The argument then proceeds exactly as in Proposition \ref{spherical_gamma2lim}: by Lemma \ref{sequence_exists}, there exists a sequence $k_N$ that verifies (\ref{sequence_cond}) and such that we can derive the convergence
$$
G(2,k_N) \distconv \gamma_2
$$
by comparison with the complex Ginibre case.

\paragraph{Convergence of $G(k_N+1,N)$ to $1$.}
It follows from the computation performed in the proof of Proposition \ref{tue_expectation_N} that
$$
\E G_{M,k} = \frac{k}{k-1},
$$
which is the same as the expectation of $F_{N,k}$ (and does not depend on $M$ nor $N$). We compute the second moment, using the values
$$
\E Y_M^2 = \frac{2}{M(M+1)},
\quad \text{and} \quad
\E \left( \frac{1}{\beta_{k,M}} -1 \right)^2
= \frac{M (M+1)}{(k-1)(k-2)}
$$
and find, as for $F_{N,k}$,
$$
\E G_{M,k}^2 = \frac{k}{k-2}
$$
so that we obtain the exact same expressions as in the spherical case. The end of the argument (and of the whole proof) is strictly similar to what has been written in the proof of Proposition \ref{spherical_gamma2lim}.
\end{proof}

The analog of the spherical structure of $\Sph(N)$ for $\TUE(N,M)$ is the stereographic projection on the pseudosphere (see \cite{ForresterKrishnapur}). However, the symmetries of the pseudosphere do not allow to establish an exact equivalent to Proposition \ref{spherical_invariance}.

\begin{theorem}\label{tue_ov12} The quenched expectation of off-diagonal overlaps in $\TUE(N,M)$ with $N \leq M$ is given by the formula
\beq
\E_{\La} \left( \Ov_{1,2} \right) = - \frac{1}{M |\la_1 - \la_2|^2}
\prod_{k=3}^N \left( 1+  \frac{(1 - \la_1 \overline{\la}_2)(1 - |\la_k|^2)}{M (\la_1 -
\la_k) (\overline{\la_2 - \la_k})}\right)
\eeq
\end{theorem}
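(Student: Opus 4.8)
The plan is to replay, \emph{mutatis mutandis}, the proof of Theorem~\ref{spherical_ov12}, substituting throughout the truncated-unitary counterparts of the objects used there: the matrices $H_n = I_n - T_n T_n^*$ and $S_{n-1} = (1-|\la_n|^2)^{1/2} H_{n-1}^{1/2}$ of (\ref{tue_def_HnSn}), the random vectors $\bW_p^{(n)}$ of (\ref{def_Wpn}), the variable $Y_M$ of (\ref{def_Ym}), Proposition~\ref{tue_subschur} and Lemma~\ref{tue_key_prop} in place of Proposition~\ref{sph_subschur} and Lemma~\ref{sph_key_prop}, Lemma~\ref{tue_distributions}, and the truncated-unitary counterpart of the bilinear integral identity (\ref{sph_integral3}) of Lemma~\ref{spherical_integrals}. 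The only structural changes relative to the spherical computation are the sign of $T T^*$ and the presence of the constraint $T T^* < 1$.

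First I would introduce the partial sums $\Ov_{1,2}^{(d)} := -\,\overline{b_2}\sum_{i=2}^{d} b_i \overline{d_i}$, so that $\Ov_{1,2} = \Ov_{1,2}^{(N)}$, and, using $d_1 = 0$, $d_2 = 1$, $b_1 = 1$, record the identity $\Ov_{1,2}^{(n)} = -\,\overline{b_2}\, B_n D_n^*$. For the base case $d = 2$, the recurrence (\ref{vect_recurrence}) gives $\Ov_{1,2}^{(2)} = -|b_2|^2 = -|u_2|^2/|\la_1 - \la_2|^2$; exactly as in the proof of Theorem~\ref{tue_ov11} one has $|u_2|^2 \disteq Y_M$, hence $\E|u_2|^2 = 1/M$ and $\E_\La \Ov_{1,2}^{(2)} = -1/(M |\la_1 - \la_2|^2)$, which initializes the recursion.

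For the inductive step I would compute $\E_{\La, \mathscr{F}_n} b_{n+1}\overline{d_{n+1}}$ by integrating out $u_{n+1}$. Writing $b_{n+1}\overline{d_{n+1}} = (B_n u_{n+1})(u_{n+1}^* D_n^*)/\bigl((\la_1 - \la_{n+1})(\overline{\la_2 - \la_{n+1}})\bigr)$, Proposition~\ref{tue_subschur} furnishes the conditional law of $u_{n+1}$ (with ``power'' $p = M - (n+1)$), and Lemma~\ref{tue_key_prop} together with the truncated-unitary bilinear identity, applied with $a = B_n^*$, $b = D_n^*$ and $S$ such that $S^2 = (1 - |\la_{n+1}|^2)(I_n - T_n T_n^*)$, yields
$$
\E_{\La, \mathscr{F}_n} b_{n+1}\overline{d_{n+1}} = \frac{1}{M (\la_1 - \la_{n+1})(\overline{\la_2 - \la_{n+1}})}\, B_n S^2 D_n^*,
$$
where the constant is exactly $1/M$ because the factor $1/(p + n + 1)$ produced by the integral collapses to $1/M$ at every step (the truncated-unitary analog of the remark following Lemma~\ref{spherical_constants}). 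Since $T_n$ is upper-triangular and $B_n, D_n$ are subvectors of the left eigenvectors $L_1, L_2$, one has $B_n T_n = \la_1 B_n$ and $D_n T_n = \la_2 D_n$, so that $B_n T_n T_n^* D_n^* = \la_1 \overline{\la_2}\, B_n D_n^*$ and therefore $B_n S^2 D_n^* = (1 - |\la_{n+1}|^2)(1 - \la_1 \overline{\la_2})\, B_n D_n^*$. Multiplying by $-\overline{b_2}$ gives
$$
-\,\overline{b_2}\, \E_{\La, \mathscr{F}_n} b_{n+1}\overline{d_{n+1}} = \frac{(1 - |\la_{n+1}|^2)(1 - \la_1 \overline{\la_2})}{M (\la_1 - \la_{n+1})(\overline{\la_2 - \la_{n+1}})}\, \Ov_{1,2}^{(n)},
$$
whence, by the tower property, $\E_\La \Ov_{1,2}^{(n+1)} = \E_\La \Ov_{1,2}^{(n)}\Bigl(1 + \frac{(1 - |\la_{n+1}|^2)(1 - \la_1 \overline{\la_2})}{M (\la_1 - \la_{n+1})(\overline{\la_2 - \la_{n+1}})}\Bigr)$. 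Iterating from $n = 2$ to $n = N - 1$ and inserting the base case produces the stated product over $k = 3, \dots, N$.

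The only place that requires genuine care — and the step I expect to be the main obstacle — is the constant bookkeeping in the inductive step: one must check that the minus sign in $H_n = I_n - T_n T_n^*$ propagates so that the cross term contributes the factor $1 - \la_1 \overline{\la_2}$ rather than $1 + \la_1 \overline{\la_2}$, and that the normalization $C_{n,p}$ in the truncated-unitary bilinear identity is such that the extra factor is precisely $1/M$ at every step. One should also note that the constraint $T T^* < 1$ is automatically inherited by every submatrix $T_n$, so it causes no difficulty. Everything else transfers verbatim from the spherical argument.
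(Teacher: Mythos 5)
Your proposal is correct and follows essentially the same route as the paper: the same partial sums $\Ov_{1,2}^{(d)}$, the same base case via $|u_2|^2 \disteq Y_M$, and the same inductive step using Proposition~\ref{tue_subschur} together with (\ref{tue_integral3}) of Lemma~\ref{tue_integrals} applied with $a = B_n^*$, $b = D_n^*$ and $S = S_n$, followed by the eigenvector relations $B_n T_n = \la_1 B_n$, $D_n T_n = \la_2 D_n$. Your constant bookkeeping is also right: with $p = M-(n+1)$ and $u_{n+1}$ of dimension $n$, the factor $1/(p+n+1)$ is exactly $1/M$ at every step, as the paper notes after Lemma~\ref{tue_constants}.
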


\begin{proof} As for the proof of theorem \ref{spherical_ov12}, we consider the partial sums
$
\Ov_{1,2}^{(d)}
$
and proceed by induction.
It follows from the proof of Theorem \ref{spherical_ov11}, that $|u_2|^2 \disteq Y_M$, so that $$
\E |u_2|^2 =
\frac{1}{M}
\quad \text{and} \quad
\E_{\La} \Ov_{1,2}^{(2)} 
= \frac{- 1}{M |\la_1 - \la_2|^2}.
$$
We then compute the conditional expectation of $b_{n+1} \overline{d_{n+1}}$ by integrating out the vector $u_{n+1}$, using Proposition \ref{tue_subschur} and (\ref{tue_integral3}) from Lemma
\ref{tue_integrals} with $a= B_n^*$, $b= D_n^*$ and $S=S_n$.
It follows that
$$
\E_{\La, \mathscr{F}_{n-1}} b_{n+1} \overline{d_{n+1}}
=
\frac{(1 - |\la_{n+1}|^2)}{M (\la_1 - \la_{n+1}) (\overline{\la_2 - \la_{n+1}})}
( B_n  D_n^* - B_n T_n T_n^* D_n^* )
$$
As noted in the proof of Theorem \ref{spherical_ov12}, we have
$$
B_n T_n = \la_1 B_n, \quad
D_n T_n = \la_2 D_n,
$$
and conclude that
$$
- \overline{b}_2
\E_{\La, \mathscr{F}_{n}} b_{n+1} \overline{d_{n+1}}
=
\frac{(1-|\la_{n+1}|^2) (1 - \la_1 \overline{\la_2})}{M (\la_1 - \la_{n+1}) (\overline{\la_2 - \la_{n+1}})}
\Ov_{1,2}^{(n)}
$$
and the factorization follows.
\end{proof}

\begin{proposition}\label{tue_mixedmom} The quenched expectation of $\tr G^* G$ with $G$ distributed according to $\TUE(N,M)$ is given by the formula:
$$
\E_{\La} \left( \frac{1}{N} \tr G^* G \right)
 =
\prod_{i=1}^N \left( 1 + \frac{1-|\la_i|^2}{M} \right)
- \left( 1+ \frac{N}{M}\right).
$$
\end{proposition}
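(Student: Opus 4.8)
The plan is to follow, step for step, the argument used for Proposition~\ref{spherical_mixedmom}, replacing the spherical density by the truncated-unitary one at each stage. Since $\tr G^*G$ is invariant under the unitary conjugation bringing $G$ to its Schur form, it suffices to compute $\E_\La \tr T_N T_N^*$, and the nested block structure of $T$ gives
\[
\tr T_n T_n^* = \tr T_{n-1}T_{n-1}^* + \|u_n\|^2 + |\la_n|^2, \qquad 2 \le n \le N.
\]
Setting $v_{M,n} := \E_\La \tr T_n T_n^*$ (a function of $\la_1,\dots,\la_n$ only), this reduces the whole problem to evaluating $\E_{\La,\mathscr{F}_{n-1}}\|u_n\|^2$, that is, to integrating out the last column of $T_n$ conditionally on $T_{n-1}$ and $\la_n$.

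For this I would use Proposition~\ref{tue_subschur}, which gives the conditional density of $T_n$ proportional to $\det(I_n - T_nT_n^*)^{M-n}$, together with Lemma~\ref{tue_FK_trick}, which isolates the $u_n$-dependence as $(1 - u_n^* S_{n-1}^{-2} u_n)^{M-n}$ with $S_{n-1}^2 = (1-|\la_n|^2)(I_{n-1} - T_{n-1}T_{n-1}^*)$; since $M-n \ge M-N \ge 0$, this is a genuine $\bW_{M-n}^{(n-1)}$-type density. Applying the truncated-unitary counterpart, in Lemma~\ref{tue_integrals}, of the integral identity (\ref{sph_integral4}) then yields a closed form
\[
\E_{\La,\mathscr{F}_{n-1}}\|u_n\|^2 = \frac{\tr S_{n-1}^2}{M} = \frac{1-|\la_n|^2}{M}\bigl((n-1) - \tr T_{n-1}T_{n-1}^*\bigr),
\]
and taking $\E_\La$ converts the nested identity into an affine recursion $v_{M,n+1} = (1+\gamma_{n+1}) v_{M,n} + \delta_{n+1}$ with $v_{M,1} = |\la_1|^2$, the coefficients $\gamma_{n+1}$ and $\delta_{n+1}$ being explicit functions of $|\la_{n+1}|^2$, $n$ and $M$ read off from the displayed identity.

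To solve this recursion I would seek an affine shift $w_{M,n} := v_{M,n} + \alpha n + \beta$, with $\alpha,\beta$ depending on $M$ only, that makes it purely multiplicative, $w_{M,n+1} = (1+\gamma_{n+1})\, w_{M,n}$, just as $w_{N,n} := v_{N,n} + N + n$ linearizes the spherical recursion (\ref{sph_recv}); matching the coefficients of $|\la_{n+1}|^2$ and of the $\la_{n+1}$-free part on both sides pins down $\alpha$ and $\beta$ uniquely. One then telescopes, $w_{M,n} = w_{M,1}\prod_{i=2}^n (1+\gamma_i)$, checks that $w_{M,1}$ is itself a constant multiple of $1+\gamma_1$, reads off $v_{M,n} = w_{M,n} - \alpha n - \beta$, and specializes to $n = N$ and divides by $N$. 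In contrast with Propositions~\ref{tue_expectation_N} and~\ref{tue_gamma2lim}, no input from Kostlan's property is needed, since the computation is entirely quenched. The only genuinely delicate step is this last piece of bookkeeping: identifying the right affine shift and keeping the signs straight, the point being that here the relevant factor involves $1-|\la_i|^2$ rather than the spherical $1+|\la_i|^2$, which flips several sign patterns in the multiplicative step and in the additive correction $\alpha n + \beta$; everything else is formally identical to the spherical proof, so no estimates beyond Lemma~\ref{tue_integrals} are required.
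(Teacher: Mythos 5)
Your plan is, step for step, the paper's own proof: the same reduction to $\E_\La \tr T_NT_N^*$ via the nested Schur blocks, the same use of Proposition \ref{tue_subschur}, Lemma \ref{tue_FK_trick} and (\ref{tue_integral4}), and the same affine-shift linearization of the resulting recursion. The one identity you actually display,
\[
\E_{\La,\mathscr{F}_{n-1}}\|u_n\|^2=\frac{\tr S_{n-1}^2}{M}=\frac{1-|\la_n|^2}{M}\bigl((n-1)-\tr T_{n-1}T_{n-1}^*\bigr),
\]
is correct. But you stop exactly one line too early before declaring the rest ``bookkeeping'': because $\tr(I_{n-1}-T_{n-1}T_{n-1}^*)$ contributes $-\tr T_{n-1}T_{n-1}^*$, the multiplicative coefficient in your affine recursion is $1+\gamma_{n+1}=1-\frac{1-|\la_{n+1}|^2}{M}$, with a \emph{minus} sign. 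Matching coefficients then forces $\alpha=-1$, $\beta=M$, i.e.\ $w_{M,n}=v_{M,n}-n+M$, and telescoping yields
\[
\E_\La\Bigl(\tfrac1N\tr G^*G\Bigr)=\frac{M}{N}\prod_{i=1}^N\Bigl(1-\frac{1-|\la_i|^2}{M}\Bigr)-\frac{M-N}{N},
\]
which is \emph{not} the right-hand side of the Proposition. The two expressions genuinely differ: at $N=1$ the stated formula evaluates to $-|\la_1|^2/M$, which is negative, whereas $\tr G^*G=|\la_1|^2$ deterministically; the formula above gives $|\la_1|^2$. (A second check: $\E_\La|T_{1,2}|^2=\frac{(1-|\la_1|^2)(1-|\la_2|^2)}{M}$ from Theorem \ref{tue_ov11}, consistent with the minus-sign product at $N=2$ but not with the stated one.)

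So the gap is not in your method but in the claim that executing it ``formally identically to the spherical proof'' lands on the stated formula: your own (correct) intermediate identity contradicts the target. The discrepancy originates in the paper's displayed recursion (\ref{tue_recv}), whose coefficient $1+\frac{1-|\la_{n+1}|^2}{M}$ inherits the plus sign from the spherical identity $\tr(I_n+T_nT_n^*)=n+\tr T_nT_n^*$, a sign that does not survive the replacement $H_n=I_n-T_nT_n^*$. If you carry out your plan honestly you will prove the corrected statement above; you should flag the sign error in the Proposition (and in the corresponding entry of the synoptic table) rather than force the computation to match it.
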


\begin{proof} As in the proof of Proposition \ref{spherical_mixedmom}, we define $v_{N,n} : = \E_{N, \La} \tr T_nT_n^*$ and note that for any $n \leq N$,
$$\tr T_nT_n^* = |\la_n|^2 + \|u_n\|^2 + \tr T_{n-1}T_{n-1}^*.$$
Using (\ref{tue_integral4}) from Lemma \ref{tue_integrals} yields a induction with $ v_{N,1} = |\la_1|^2 $ and
\begin{eqnarray}\label{tue_recv}
 v_{N,n+1}
%& = &  v_{N,n} + |\la_{n+1}|^2 +\frac{1}{M} (1 -|\la_{n+1}|^2) (n + v_{N,n}) \\
 & = &  v_{N,n} \left(1+ \frac{1-|\la_{n+1}|^2 }{M} \right)  + |\la_{n+1}|^2
 +\frac{n}{M} \left(1 - |\la_{n+1}|^2 \right).
\end{eqnarray}
This is an analogous recursion formula to the one obtained in Proposition \ref{spherical_mixedmom} and it can be solved the same way, replacing $|\la_i|^2$ by $-|\la_i|^2$ and $N$ by $M$ in the denominators; this leads to the expression
$$
\frac{1}{N} v_{N,n}
 =
\prod_{i=1}^n \left( 1 + \frac{1-|\la_i|^2}{M} \right)
- \left( 1 + \frac{n}{M}  \right)
$$
which is equivalent to the statement, when $n=N$.
\end{proof}

\subsection{Constants and integrals}\label{tue_estimates}

\begin{lemma}\label{tue_constants}
For any $p \geq 0$, with $\bB_n : = \{ (\la_1, \dots, \la_n) \in \C^n \ | \ \sum
|\la_i|^2 \leq 1 \}$,
\begin{align}\label{tue_ct0}
D_{n,p} & := \int_{z \in \bB_n} { \left(1 - \sum_{i=1}^{n} |\la_i|^2 \right)^p
} \dd m(\la_1) \dots \dd m(z_{n}) = \pi^n \frac{p!}{(p+n)!},
\end{align}
and
\begin{align}\label{tue_ct1}
D_{n,p}^{(1)} & := \int_{z \in \bB_n} {|\la_1|^2} { \left(1 - \sum_{i=1}^{n}
|\la_i|^2 \right)^p } \dd m(\la_1) \dots \dd m(z_{n}) = \frac{1}{p+n+1}
D_{n,p}.
\end{align}
\end{lemma}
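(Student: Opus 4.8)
The plan is to mimic the proof of Lemma \ref{spherical_constants}, i.e. to induct on $n$ with a single one-dimensional slice integral driving the recursion. The only change from the spherical setting is that the decaying kernel $(1+\sum|z_i|^2)^{-p}$ is replaced by the compactly supported $(1-\sum|\la_i|^2)^p$ on $\bB_n$, so the geometric integrals on $[1,\infty)$ become beta integrals on $[0,1]$.

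First I would settle the case $n=1$: passing to polar coordinates and substituting $s=r^2$ gives $D_{1,p}=\pi\int_0^1(1-s)^p\,ds=\tfrac{\pi}{p+1}$ and $D_{1,p}^{(1)}=\pi\int_0^1 s(1-s)^p\,ds=\pi\,B(2,p+1)=\tfrac{\pi}{(p+1)(p+2)}$, which is (\ref{tue_ct0})--(\ref{tue_ct1}) for $n=1$ and in particular gives $D_{1,p}^{(1)}=\tfrac{1}{p+2}D_{1,p}$. For the inductive step I would fix $\la_1,\dots,\la_{n-1}$, set $\alpha:=1-\sum_{i=1}^{n-1}|\la_i|^2$ (the integrand being supported on $\alpha\geq 0$), and integrate out the last variable over $\{\,|\la_n|^2\leq\alpha\,\}$: again in polar coordinates, the slice integral $\int_{|\la_n|^2\leq\alpha}(\alpha-|\la_n|^2)^p\,dm(\la_n)=\tfrac{\pi}{p+1}\,\alpha^{p+1}$ raises the exponent from $p$ to $p+1$ on the remaining $n-1$ variables at the cost of a factor $\tfrac{\pi}{p+1}$. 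Recognizing $\alpha^{p+1}=(1-\sum_{i=1}^{n-1}|\la_i|^2)^{p+1}$ then yields $D_{n,p}=\tfrac{\pi}{p+1}D_{n-1,p+1}$, and, since only $\la_n$ was integrated and the factor $|\la_1|^2$ is untouched, $D_{n,p}^{(1)}=\tfrac{\pi}{p+1}D_{n-1,p+1}^{(1)}$.

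Unrolling the first recursion down to the base case produces the telescoping product $\pi^{n-1}\tfrac{p!}{(p+n-1)!}\cdot\tfrac{\pi}{p+n}=\pi^n\tfrac{p!}{(p+n)!}$, which is (\ref{tue_ct0}); unrolling the second recursion and inserting $D_{1,p+n-1}^{(1)}=\tfrac{1}{p+n+1}D_{1,p+n-1}$ gives $D_{n,p}^{(1)}=\tfrac{1}{p+n+1}D_{n,p}$, which is (\ref{tue_ct1}). Equivalently, one checks that the relation $D_{n,q}^{(1)}=\tfrac{1}{q+n+1}D_{n,q}$ is stable under the recursion $q\mapsto q+1$, $n\mapsto n-1$ since $(q+1)+(n-1)+1=q+n+1$, and closes off at $n=1$.

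I do not expect a genuine obstacle here: both quantities are classical Dirichlet/beta integrals, and a one-line alternative is to read off $D_{n,p}$ (up to the factor $\pi^n$) as the normalization of a Dirichlet law on the simplex. I present the inductive form only to parallel Lemma \ref{spherical_constants}; as a consistency check, starting the recursion at $n=N-1$, $p=M-N$ produces the extra factor $\tfrac{1}{p+n+1}=\tfrac1M$ at every step, exactly mirroring the remark following the spherical computation.
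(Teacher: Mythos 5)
Your proposal is correct and follows essentially the same route as the paper: the same base case $D_{1,p}=\pi/(p+1)$, $D_{1,p}^{(1)}=\pi/((p+1)(p+2))$, and the same recursion $D_{n,p}=\tfrac{\pi}{p+1}D_{n-1,p+1}$ obtained by integrating out the last coordinate against the rescaled slice integral, with the $|\la_1|^2$ weight carried untouched through the induction. The observation that $q+n$ is invariant under the recursion, and the closing consistency check with $n=N-1$, $p=M-N$, match the paper's remark exactly.
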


\begin{proof}
We first compute $D_{n,p}$ by induction on $n$. For $n=1, p \geq 0$,
\begin{align*}
D_{1,p} = \int_{z \in \D} { \left(1 - |z|^2 \right)^p } \dd m(z) =  \pi
\int_{r = 0}^{1} { r^p } \dd r  = \frac{\pi}{p+1}
\end{align*}
note that for any $\al>0$,
\begin{align*}
\int_{|z|^2 < \alpha} { \left(1 - \al^{-1} |z|^2 \right)^p } \dd m(z) =
\frac{\pi \al}{p+1}.
\end{align*}
For general $n$, using the above equalities with $\al_n = 1 -
\sum_{i=1}^{n-1} |\la_i|^2$,
\begin{align*}
D_{n,p}
% & = \int_{z \in \bB_n} { \left(1 - \sum_{i=1}^n |\la_i|^2 \right)^p } \dd m(\la_1) \dots \dd m(\la_n) \\
& = \int_{z \in \bB_n} { \left(1 - \sum_{i=1}^{n-1} |\la_i|^2 \right)^p }
\times
{ \left(1 - \al_n^{-1} |\la_n|^2 \right)^p }
\dd m(\la_1) \dots \dd m(\la_n) \\
&= \frac{\pi}{p+1} \int_{z \in \bB_{n-1}} { \left(1 - \sum_{i=1}^{n-1}
|\la_i|^2 \right)^{p+1} }
\dd m(\la_1) \dots \dd m(\la_{n-1})
= \frac{\pi}{p+1} D_{n-1,p+1}.
\end{align*}
Equation (\ref{tue_ct0}) follows. A similar induction can be performed on
$D_{n,p}^{(1)}$. The only difference is that the last step involves the
following identity: for any $p \geq 0$,
\begin{align*}
D_{1,p}^{(1)} & = \int_{z \in \D} {|z|^2}{ (1 - |z|^2)^p } \dd m(z)
% = \pi \int_{r \in \R_+} {r}{ (1 - r)^p } \dd r
= \pi \int_{r = 0}^{1} {(r-1)}{ r^p } \dd r
= \pi \left( \frac{1}{p+1} - \frac{1}{p+2} \right)
= \frac{\pi}{(p+1)(p+2)},
\end{align*}
which in general yields the extra factor $\frac{1}{p+n+1}$ in
(\ref{tue_ct1}).
\end{proof}

Note that when we begin the recursion from \cite{ForresterKrishnapur} with
$n=N-1,p=M-N$, the extra factor is $\frac{1}{M}$ at every step.

\begin{lemma}\label{tue_integrals}
For any $p>n$, $a,b \in \C^N$ and any Hermitian positive-definite matrix $S$,
\begin{align}
\int_{S \bB_n} {(1 - u^* S^{-2} u)^{p}} \dd u
& = D_{n,p} |\det S|^2, \label{tue_integral1} \\
\int_{S \bB_n} { (a^*u) }{(1- u^* S^{-2} u)^{p}} \dd u
& = 0, \label{tue_integral2} \\
\int_{S \bB_n} {(a^* u) (u^* b) }{(1 - u^* S^{-2} u)^{p}} \dd u
&= D_{n,p} |\det S|^2 \frac{ a^* S^2 b }{n+p+1}, \label{tue_integral3} \\
\int_{S \bB_n} { \|u\|^2 }{(1 - u^* S^{-2} u)^{p}} \dd u
&= D_{n,p} |\det S|^2 \frac{ \tr S^2 }{n+p+1}, \label{tue_integral4}
\end{align}
where the constant $D_{n,p}$ is explicitly computed in Lemma \ref{tue_constants}.
\end{lemma}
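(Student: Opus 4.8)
The plan is to mirror the structure of the proof of Lemma \ref{spherical_integrals}, adapting every step to the compactly-supported density $(1-u^*S^{-2}u)^p$ on the ellipsoid $S\bB_n$ in place of the rational density on all of $\C^n$. First I would treat \eqref{tue_integral1}: this is exactly the normalization computed in \cite{ForresterKrishnapur}, but it also follows immediately from Lemma \ref{tue_constants} after the change of variables $u=Sv$, which maps $S\bB_n$ to $\bB_n$ and contributes the Jacobian $|\det S|^2$, reducing the integral to $D_{n,p}$. Equation \eqref{tue_integral2} is zero by the symmetry $v\mapsto -v$ (or $v\mapsto e^{\ii\theta}v$ coordinatewise), since the measure is invariant and the integrand is odd; this is identical to the spherical case.

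For \eqref{tue_integral3}, after the substitution $u=Sv$ the integral becomes $|\det S|^2\int_{\bB_n}(a^*Sv)(v^*Sb)(1-v^*v)^p\,\dd v$. I would write $(a^*Sv)(v^*Sb)=v^*Av$ with $A=Sba^*S$ a rank-one matrix, and diagonalize: choose a unitary basis $(e_1,\dots,e_n)$ adapted to $A$. Because the domain $\bB_n$ and the weight $(1-v^*v)^p$ are both unitarily invariant, this change of basis leaves the integral unchanged; the cross terms $v_1v_i$ then vanish by symmetry, leaving $\lambda_1(A)\int_{\bB_n}|v_1|^2(1-\sum|v_i|^2)^p\,\dd v=\lambda_1(A)D_{n,p}^{(1)}$. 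Invoking Lemma \ref{tue_constants} gives $D_{n,p}^{(1)}=\frac{1}{p+n+1}D_{n,p}$, and identifying $\lambda_1(A)=\tr A=\tr(Sba^*S)=a^*S^2b$ yields \eqref{tue_integral3}. The argument for \eqref{tue_integral4} is the same with $A$ replaced by the full-rank matrix $S^2$: diagonalize $S$, use unitary invariance of the domain, and sum the contributions $\lambda_i(S^2)\,D_{n,p}^{(1)}$ to obtain $\tr(S^2)\,D_{n,p}/(p+n+1)$; note \eqref{tue_integral3} with $a=b$ being a coordinate vector already gives each term, so \eqref{tue_integral4} follows by linearity, or directly.

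The only genuine point requiring care—rather than an obstacle—is bookkeeping of the index shift: in the spherical case the extra factor was $\frac{1}{p-(n+1)}$, whereas here Lemma \ref{tue_constants} produces $\frac{1}{p+n+1}$, because the compact density has its weight raised rather than lowered as dimensions are integrated out. One should also check that the hypothesis $p>n$ (in fact $p\geq 0$ suffices for \eqref{tue_constants}, but $p>n$ is what is inherited from the application, where $p=M-n$ with $M\geq N\geq n$) guarantees the stated constants are finite and the manipulations legitimate; since everything is a polynomial times a bounded weight on a bounded set, integrability is never in question, so the proof is entirely routine once the change of variables and the unitary-invariance reduction are set up as above.

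\begin{proof}
The change of variables $u=Sv$ sends $S\bB_n$ to $\bB_n$ with Jacobian $|\det S|^2$. Then \eqref{tue_integral1} is $|\det S|^2\int_{\bB_n}(1-v^*v)^p\,\dd v=|\det S|^2 D_{n,p}$ by Lemma \ref{tue_constants}, and \eqref{tue_integral2} vanishes since the integrand is odd in $v$ while the measure on $\bB_n$ is even.

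For \eqref{tue_integral3}, after $u=Sv$ we must evaluate
$$
|\det S|^2 \int_{\bB_n} (a^* S v)(v^* S b)\,(1-v^*v)^p\,\dd v
=
|\det S|^2 \int_{\bB_n} v^* A v \,(1-v^*v)^p\,\dd v,
$$
where $A=Sba^*S$ has rank one. Expressing $v=\sum v_i e_i$ in a unitary basis $(e_1,\dots,e_n)$ with $(e_2,\dots,e_n)$ spanning $\ker A$ and writing $Ae_1=\lambda_1(A)e_1+\sum_{i\geq 2}\alpha_i e_i$, one has $v^*Av=\lambda_1(A)|v_1|^2+\sum_{i\geq 2}\alpha_i v_1\overline{v_i}$. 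Because $\bB_n$ and the weight $(1-\sum|v_i|^2)^p$ are invariant under this unitary change of basis, and because the cross terms vanish by the symmetry $v_i\mapsto -v_i$ ($i\geq 2$), the integral reduces to $\lambda_1(A)\,D_{n,p}^{(1)}$. Lemma \ref{tue_constants} gives $D_{n,p}^{(1)}=\frac{1}{p+n+1}D_{n,p}$, and $\lambda_1(A)=\tr A=\tr(Sba^*S)=a^*S^2 b$; this proves \eqref{tue_integral3}.

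Finally, \eqref{tue_integral4} follows by taking $a=b=e_i$ in \eqref{tue_integral1}--\eqref{tue_integral3} after diagonalizing $S$: writing $\|u\|^2=\sum_i |u^* e_i|^2$ over an orthonormal eigenbasis of $S$ and applying \eqref{tue_integral3} to each term yields $D_{n,p}|\det S|^2\sum_i \lambda_i(S^2)/(p+n+1)=D_{n,p}|\det S|^2\,\tr(S^2)/(p+n+1)$, as claimed.
\end{proof}
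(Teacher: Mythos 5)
Your proof is correct and follows exactly the route the paper intends: the paper simply states that Lemma \ref{tue_integrals} is proved ``exactly analogously'' to Lemma \ref{spherical_integrals}, and your argument is precisely that adaptation (change of variables $u=Sv$, the rank-one matrix $A=Sba^*S$ with $\lambda_1(A)=\tr A=a^*S^2b$, vanishing of cross terms by symmetry, and the constants from Lemma \ref{tue_constants} giving the factor $\frac{1}{p+n+1}$ in place of $\frac{1}{p-(n+1)}$). Your remark that $(\ref{tue_integral4})$ follows from $(\ref{tue_integral3})$ by linearity over an orthonormal basis is a minor, harmless streamlining of the paper's direct diagonalization of $S$.
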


%% Below is the full proof, just in case.
%\begin{proof} Integral (\ref{tue_integral1}) was computed in
% \cite{ForresterKrishnapur}. (\ref{tue_integral2}) is zero by symmetry.
% For (\ref{tue_integral3}), the change of variables $u=Sv$ yields
% $$
%  |\det S|^2 \int \dd v { (a^* S v) (v^* S b)}{ (1 - v^* v)^{p}}.
% $$
% We notice that
% $$
% (a^* S v) (v^* S b) = v^* (S b a^* S) v
% = v^* A v
% $$
% where $A = S b a^* S$ is a matrix of rank $1$. If we express $v = \sum
% v_i e_i$ in a unitary basis such that the vectors $(e_2, \dots, e_n)$ form
% a basis of $\ker(A)$,
% $$
% v^* A v = \left(\sum_{i=1}^n v_i e_i \right)^* \la_1(A) v_1 e_1 =
% \la_1(A) v_1^2
% $$
% where $\la_1(A)$ is the only non zero eigenvalue of $A$. Therefore, after
% a unitary change of basis the integral becomes, using Lemma
% \ref{tue_constants},
% $$
% \int \dd v{ \la_1(A) v_1^2}{ (1 - v_1^2 - \dots - v_n^2)^{p}}
% =
% \la_1(A) \frac{1}{p+n+1} D_{n,p}.
% $$
% The value of $\la_1(A)$ can be obtained by writing
% $$
% \la_1(A) = \tr S b a^* S = a^* S^2 b,
% $$
% from which the claim (\ref{tue_integral3}) follows. The same technique
% applied to (\ref{tue_integral4}) yields
% $$
% |\det S|^2 \int \dd v { \| Sv \|^2}{ (1 - v^* v)^{p}}.
% $$
% and a unitary change of variable to a basis that diagonalizes $S$,
% together with Lemma \ref{tue_constants}, gives
% $$
% \int \dd v {\left( \la_1(S^2) v_1^2 + \dots + \la_n(S^2) v_n^2 \right)}{
% (1 - v_1^2  - \dots - v_n^2)^{p}}
% = (\la_1(S^2) + \cdots + \la_n(S^2)) \frac{1}{p+n+1} D_{n,p},
% $$
% concluding the proof of the last claim.
%\end{proof}

\begin{lemma}\label{tue_distributions}
For any $p>n$, $a \in \C^n$ and any Hermitian positive-definite matrix $S$, if $u \in \C^n$ is distributed with density
$$
\frac{1}{C_{n,p} |\det S|^2} ( 1 - u^* S^{-2} u )^p
$$
with respect to the Lebesgue measure on $\C^n$, then the following identity in distribution holds:
$$
| a^*u |^2 \disteq \|S a\|^2 Y_{p+n+1}.
$$
\end{lemma}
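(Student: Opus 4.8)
The plan is to mirror exactly the proof of Lemma \ref{spherical_distributions}, replacing the spherical weight $1/(1+u^*S^{-2}u)^p$ by the truncated-unitary weight $(1-u^*S^{-2}u)^p$ (supported on the ellipsoid $S\bB_n$) and the variable $X_m$ by $Y_m$. First I would perform the linear change of variables $u=Sv$, which has Jacobian $|\det S|^2$ and maps the domain onto the unit ball $\bB_n$; this shows that $u\disteq S\bW_p^{(n)}$, where $\bW_p^{(n)}$ has the density (\ref{def_Wpn}). Consequently $|a^*u|^2\disteq|a^*Sv|^2=v^*Av$ with $A=Saa^*S$ a Hermitian positive-semidefinite matrix of rank one, whose only nonzero eigenvalue equals its trace, $\la_1(A)=\tr(Saa^*S)=a^*S^2a=\|Sa\|^2$.

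Next I would perform a unitary change of variables on $v$ diagonalizing $A$; since the density $(1-v^*v)^p\mathds{1}_{v^*v<1}$ and the domain $\bB_n$ are both invariant under unitary transformations, this reduces the problem to computing the distribution of $\la_1(A)|v_1|^2$ where $(v_1,\dots,v_n)$ is distributed with density proportional to $(1-\sum|v_i|^2)^p$ on the unit ball. Integrating out $v_2,\dots,v_n$ — passing to the variable $t=|v_1|^2$ and performing the remaining radial integrations, which is precisely the kind of Dirichlet-type computation underlying Lemma \ref{tue_constants} — yields that $|v_1|^2$ has density proportional to $(1-t)^{p+n-1}\mathds{1}_{(0,1)}$; after normalization this is exactly the $\beta_{1,p+n}$ density, i.e. the law of $Y_{p+n+1}$ as defined in (\ref{def_Ym}). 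Therefore $|a^*u|^2\disteq\|Sa\|^2\,Y_{p+n+1}$, as claimed. (This is consistent with the statement just above the lemma that a coordinate $w_i$ of $\bW_p^{(n)}$ satisfies $|w_i|^2\disteq Y_{p+n+1}$, which is the special case $S=I_n$, $a=e_i$.)

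The only genuinely non-routine point is the radial integration step: one must verify that integrating the weight $(1-t-r_2^2-\dots-r_n^2)^p$ over the remaining coordinates, with $r_j=|v_j|$, produces a power $(1-t)^{p+n-1}$ rather than some other exponent. This follows from the same $n-1$ successive one-dimensional integrations used in the proof of Lemma \ref{tue_constants} (each integration over one complex coordinate raises the exponent by $1$, and there are $n-1$ of them), so no new idea is required; one simply tracks the exponent bookkeeping. The rank-one reduction and the unitary invariance are immediate, exactly as in the spherical case, so I would present the proof in two or three lines referring back to Lemma \ref{spherical_distributions} and Lemma \ref{tue_constants}.

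\begin{proof}
By the change of variables $u=Sv$ (Jacobian $|\det S|^2$), $u\disteq S\bW_p^{(n)}$. Then $|a^*u|^2\disteq v^*Av$ with $A=Saa^*S$ Hermitian of rank one and $\la_1(A)=\tr A=a^*S^2a=\|Sa\|^2$. A unitary change of variables diagonalizing $A$ preserves the density $(1-v^*v)^p\mathds{1}_{\bB_n}$, reducing matters to the law of $\la_1(A)|v_1|^2$. Successive integration of $v_2,\dots,v_n$, as in Lemma \ref{tue_constants}, shows $|v_1|^2$ has density proportional to $(1-t)^{p+n-1}\mathds{1}_{(0,1)}$, i.e. $|v_1|^2\disteq Y_{p+n+1}$. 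Hence $|a^*u|^2\disteq\|Sa\|^2\,Y_{p+n+1}$.
\end{proof}
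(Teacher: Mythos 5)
Your proof is correct and follows exactly the route the paper intends: the paper gives no separate argument for this lemma, stating only that it is ``exactly analogous'' to Lemma \ref{spherical_distributions}, whose proof is precisely your sequence of steps (change of variables $u=Sv$, rank-one reduction $v^*Av$ with $\la_1(A)=\|Sa\|^2$, unitary invariance, successive integration of the remaining coordinates). Your explicit check that the exponent becomes $p+n-1$ after the $n-1$ radial integrations, identifying $|v_1|^2\disteq Y_{p+n+1}$, is exactly the bookkeeping the paper leaves implicit.
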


The proofs of Lemmata \ref{tue_integrals}  and \ref{tue_distributions} are exactly analogous to the
proofs of their spherical counterpart, Lemmata \ref{spherical_integrals} and \ref{spherical_distributions}.

\begin{bibdiv}
\begin{biblist}

\bib{Akemannetal}{article}{
title={On the determinantal structure of conditional overlaps for the complex Ginibre ensemble},
author={Akemann, G.},
author={Tribe, R.},
author={Tsareas, A.},
author={Zaboronski, O.},
journal={arXiv preprint arXiv:1903.09016},
year={2019}
}

%\bib{Andreiev}{article}{
%   author={Andr\'eief, M.C.},
%   title={Note sur une relation entre les int\'egrales d\'efinies des
%produits des fonctions},
%   journal={M\'emoires de la soci\'et\'e des sciences physiques et
%naturelles de Bordeaux},
%   volume={2},
%   date={1883},
%   %La date que je trouve en ligne est 1886
%   pages={1-14}
%}

\bib{NowakTarnowski1}{article}{,
  title={Squared eigenvalue condition numbers and eigenvector correlations from the single ring theorem},
  author={Belinschi, S.},
  author={Nowak, M. A.},
  author={Speicher, R.},
  author={Tarnowski, W.},
  journal={Journal of Physics A: Mathematical and Theoretical},
  volume={50},
  number={10},
  pages={105204},
  year={2017},
  publisher={IOP Publishing}
}

\bib{BourgadeDubach}{article}{
  author={Bourgade, P.},
  author={Dubach, G.},
  title={The distribution of overlaps between eigenvectors of Ginibre
matrices},
  journal={eprint arXiv:1801.01219},
   date={2018}
}

\bib{ChaMeh1998}{article}{
   author={Chalker, J. T.},
   author={Mehlig, B.},
      title={Eigenvector statistics in non-Hermitian random matrix
ensembles},
   journal={Phys. Rev. Lett.},
   volume={81},
   date={1998},
      number={16},
      pages={3367--3370}
}

\bib{ChaMeh2000}{article}{
   author={Chalker, J. T.},
   author={Mehlig, B.},
   title={Statistical properties of eigenvectors in non-Hermitian Gaussian
   random matrix ensembles},
   journal={J. Math. Phys.},
   volume={41},
   date={2000},
   number={5},
   pages={3233--3256}
}

\bib{CrawfordRosenthal}{article}{
title={Eigenvector correlations in the complex Ginibre ensemble},
author={Crawford, N.},
author={Rosenthal, R.},
journal={arXiv preprint arXiv:1805.08993},
year={2018}
}

%\bib{Deift1}{article}{
%   author={Deift, P.},
%   author={Gioev, D.},
%   title={Random Matrix Theory: Invariant Ensembles and Universality},
%   journal={Courant Lecture Notes},
%   volume={18},
%   date={2009},
%%   pages={37--63}
%}

\bib{DubachPowers}{article}{
   author={Dubach, G.},
   title={Powers of Ginibre Eigenvalues},
   journal = {Electron. J. Probab.},
   volume={23},
   date={2018},
   pages={1--31},
% PNO={111},
% ISSN = {1083-6489},
% DOI = {10.1214/18-EJP234},
% SICI = {1083-6489(2018)23:111<1:POGE>2.0.CO;2-I},
}

\bib{DubachQGE}{article}{
  author={Dubach, G.},
  title={Symmetries of the Quaternionic Ginibre Ensemble},
  journal={arXiv preprint arXiv:1811.03724},
  year={2018}
}

\bib{ForresterKrishnapur}{article}{
   author={Forrester, P. J.},
   author={Krishnapur, M.},
   title={Derivation of an eigenvalue probability density function relating
   to the Poincar\'{e} disk},
   journal={J. Phys. A},
   volume={42},
   date={2009},
   number={38},
   pages={385204, 10},
%   issn={1751-8113},
%   review={\MR{2540391}},
%   doi={10.1088/1751-8113/42/38/385204},
}
   
\bib{Fyodorov2018}{article}{
  title={On statistics of bi-orthogonal eigenvectors in real and complex ginibre ensembles: combining partial schur decomposition with supersymmetry},
author={Fyodorov, Y. V.},
journal={Communications in Mathematical Physics},
volume={363},
number={2},
pages={579--603},
year={2018},
publisher={Springer}
}

%\bib{Gin1965}{article}{
%   author={Ginibre, J.},
%   title={Statistical ensembles of complex, quaternion, and real matrices},
%   journal={J. Mathematical Phys.},
%   volume={6},
%   date={1965},
%   pages={440--449}
%}

\bib{HKPV}{article}{
   author={Hough, J. B.},
   author={Krishnapur, M.},
   author={Peres, Y.},
   author={Vir\'ag, B.},
   title={Determinantal processes and independence},
   journal={Probab. Surv.},
   volume={3},
   date={2006},
   pages={206--229}
}

\bib{Kostlan}{article}{
   author={Kostlan, E.},
   title={On the spectra of Gaussian matrices},
   note={Directions in matrix theory (Auburn, AL, 1990)},
   journal={Linear Algebra Appl.},
   volume={162/164},
   date={1992},
   pages={385--388},
%   issn={0024-3795},
%  review={\MR{1148410}},
%  doi={10.1016/0024-3795(92)90386-O},
}

\bib{NowakTarnowski2}{article}{
author={Nowak, M. A.},
author={Tarnowski, W.},
title={Probing non-orthogonality of eigenvectors in non-Hermitian matrix models: diagrammatic approach},
journal={Journal of High Energy Physics},
date={2018},
volume={2018.6},
pages={152}
}

\bib{Sommers}{article}{
   author={\.Zyczkowski, K.},
   author={Sommers, H.-J.},
   title={Truncations of random unitary matrices},
   journal={J. Phys. A},
   volume={33},
   date={2000},
   number={10},
   pages={2045--2057},
 %  issn={0305-4470},
 %  review={\MR{1748745}},
 %  doi={10.1088/0305-4470/33/10/307},
}

\bib{WalSta2015}{article}{
   author={Walters, M.},
   author={Starr, S.},
   title={A note on mixed matrix moments for the complex Ginibre ensemble},
   journal={J. Math. Phys.},
   volume={56},
   date={2015},
   number={1},
   pages={013301, 20}
}

\end{biblist}
\end{bibdiv}

\end{document}